\newtheorem{theorem}{Theorem}[section]
\newtheorem{lemma}[theorem]{Lemma}
\newtheorem{proposition}[theorem]{Proposition}
\newtheorem{letterthm}{Theorem}
\newtheorem{lettercor}[letterthm]{Corollary}
\newtheorem*{rep@theorem}{\rep@title}
\newcommand{\newreptheorem}[2]{%
\newenvironment{rep#1}[1]{%
 \def\rep@title{#2 \ref{##1}}%
 \begin{rep@theorem}}%
 {\end{rep@theorem}}}
\theoremstyle{definition}
\newtheorem{definition}[theorem]{Definition}
\theoremstyle{remark}
\newtheorem{remark}[theorem]{Remark}
\newtheorem{example}[theorem]{Example}
\newcommand{\Q}{\mathbb{Q}}
\newcommand{\R}{\mathbb{R}}
\newcommand{\Z}{\mathbb{Z}}
\newcommand{\Lk}{\mathrm{Lk}}
\newcommand{\Area}{\mathrm{Area}}
\newcommand{\Ess}{\mathrm{Ess}}
\newcommand{\Irred}{\mathrm{Irred}}
\newcommand{\Orig}{\mathrm{Orig}}
\newcommand{\Surf}{\mathrm{Surf}}
\newcommand{\eblocks}{\curlyE}
\newcommand{\vblocks}{\curlyV}
\newcommand{\curlyE}{\mathcal{E}}
\newcommand{\curlyV}{\mathcal{V}}
\newcommand{\into}{\hookrightarrow}
\newcommand{\opp}[1]{{#1}^*}
\title{Rationality theorems for curvature invariants of 2-complexes}
\author{Henry Wilton}
\newcommand{\Addresses}{{
  \bigskip
  \footnotesize

  \textsc{DPMMS, Centre for Mathematical Sciences, Wilberforce Road, Cambridge, CB3 0WB, UK}\par\nopagebreak
  \textit{E-mail address:} \texttt{h.wilton@maths.cam.ac.uk}

}}
\begin{document}

\maketitle

\begin{abstract}
Let $X$ be a finite, 2-dimensional cell complex. The curvature invariants $\rho_\pm(X)$ and $\sigma_\pm(X)$ were defined in \cite{wilton_rational_2024}, and a programme of conjectures was outlined. Here, we prove the foundational result that the quantities $\rho_\pm(X)$ and $\sigma_\pm(X)$ are the extrema of explicit rational linear-programming problems. As a result they are rational, realised, and can be computed algorithmically.
\end{abstract}

Throughout this paper, we will be interested in a finite, 2-dimensional cell complex $X$. Perhaps $X$ can be simplified in a trivial way, either because some face of $X$ has a free edge, or because some vertex of $X$ is locally separating. More generally, even if $X$ does not have these features, either feature may appear after modifying the 1-skeleton of $X$ by a homotopy equivalence. In this case $X$ is called \emph{reducible}, and so we will always assume that $X$ is irreducible\footnote{See Definition \ref{def: (Visibly) irreducible 2-complex} for the precise definition of an irreducible 2-complex.}.

The \emph{average curvature} of $X$, defined to be
\[
\kappa(X):=\frac{\chi(X)}{\Area(X)}\,,
\]
where $\chi(X)$ is the Euler characteristic and $\Area(X)$ is the number of 2-cells, provides a crude measure of the curvature of $X$. Four more refined curvature invariants of $X$ are proposed in \cite{wilton_rational_2024}. The idea is to probe $X$ by measuring the average curvatures of 2-complexes $Y$ that map combinatorially to $X$. In fact, it is fruitful to endow $Y$ with some extra structure -- a notion of area on the 2-cells -- and to allow the map $Y\to X$ to branch over the centres of the 2-cells. This leads to the notion of a \emph{branched} 2-complex, and the average curvature $\kappa(Y)$ extends naturally to this setting.\footnote{See Definitions \ref{def: Branched 2-complex} and \ref{def: Total and average curvatures} for the definitions of a branched 2-complex and its average curvature.}

The \emph{maximal irreducible curvature} of $X$ is then defined to be
\[
\rho_+(X):=\sup_{Y\in\Irred(X)}\kappa(Y)\,,
\]
where $\Irred(X)$ consists of all finite, irreducible, branched 2-complexes $Y$ equipped with an essential map to $X$.\footnote{See Definition \ref{def: Essential map} for the definition of an essential map.} The corresponding infimum, $\rho_-(X)$, is called the \emph{minimal irreducible curvature}.   Our first main theorem asserts that the extrema in the definitions of $\rho_+$ and $\rho_-$ can be computed using an explicit rational linear-programming problem.

\begin{letterthm}[Rationality theorem for irreducible curvatures]\label{introthm: Rationality for irreducible curvature}
If $X$ is a finite, 2-dimensional cell complex and $\Irred(X)$ is non-empty, then:
\begin{enumerate}[(i)]
\item the curvature invariants $\rho_+(X)$ and $\rho_-(X)$ are the maximum and minimum, respectively, of an explicit rational linear-programming problem; furthermore,
\item $\rho_+(X)$ is attained by some $Y_{\max}\in\Irred(X)$ and $\rho_-(X)$ is attained by some $Y_{\min}\in\Irred(X)$.
\end{enumerate}
In particular, $\rho_\pm(X)\in\Q$ and both quantities can be computed algorithmically from $X$.
\end{letterthm}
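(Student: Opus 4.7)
The plan is to encode branched 2-complexes $Y\in\Irred(X)$ as non-negative real combinations of a finite list of combinatorial building blocks, so that the invariants $\chi(Y)$ and $\Area(Y)$ become linear functionals and the compatibility conditions become rational linear equations. First I would fix a finite collection of local pieces. For each 2-cell $f$ of $X$, the 2-cells of $Y$ mapping onto $f$ as branched covers are classified, up to combinatorial isomorphism relative to a fixed area unit, by a finite set of types recording the branching profile over the centre; each contributes fixed rational amounts to $\Area(Y)$ and $\chi(Y)$. For each vertex $v$ of $X$, there are finitely many possible isomorphism types of subgraph of $\Lk_X(v)$ that can occur as the link of a vertex of $Y$ lying over $v$. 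Restricting to the link types corresponding to non-degenerate turns (together with any further local conditions from the definition of an essential map) encodes essentiality at the level of building blocks.

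Next I would introduce a variable $x_t\ge 0$ for each building block $t$ and write down the gluing equations. At each oriented edge $e$ of $X$, the total number of 2-cell sides of $Y$ mapping to $e$ with each orientation must equal the number of edge-ends contributed by the link pieces at the two endpoints; this is a finite system of rational linear equations in the $x_t$. Together with non-negativity, these define a rational polyhedral cone $C\subset\R^N$, on which $\Area(Y)=L_{\mathrm{area}}(x)$ and $\chi(Y)=L_\chi(x)$ for explicit rational linear functionals. Thus $\kappa(Y)=L_\chi(x)/L_{\mathrm{area}}(x)$ is linear-fractional, and intersecting $C$ with the hyperplane $\{L_{\mathrm{area}}=1\}$ turns the extremisation of $\kappa$ into a genuine rational linear program over the resulting rational polytope $P$. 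Its extrema are then rational, attained at vertices of $P$, and computable by standard LP algorithms.

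The main obstacle is bridging the gap between $P$ and $\Irred(X)$. On the one hand, every $Y\in\Irred(X)$ produces an integer point of $P$, but I also need to realise rational extremal vertices of $P$ as honest elements of $\Irred(X)$: given a rational vertex $x\in P$, clearing denominators gives integer multiplicities $nx$, and the gluing equations, being exactly the matching conditions required to attach the local pieces, should allow one to assemble a branched 2-complex $Y$ carrying an essential map to $X$. On the other hand, such an assembled $Y$ need not automatically be irreducible, and the closure in $P$ of the image of $\Irred(X)$ might contain reducible limits. The strategy is to analyse the reduction moves alluded to in the definition of reducibility — collapsing a free face, splitting at a locally separating vertex, and homotopy of the 1-skeleton — and show that they correspond to linear projections between rational polytopes under which $\kappa$ is monotone, so the extrema are attained on the irreducible stratum. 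Carrying out this combinatorial analysis of reducibility, and verifying that it precisely matches the LP formulation, is where I expect the bulk of the work, and the real difficulty, to lie.
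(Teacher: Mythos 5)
Your outline reproduces the overall architecture of the paper's argument --- finitely many local blocks over the vertices and faces of $X$, gluing equations over the edges, linear functionals computing $\Area$ and $\chi$, and a linear program on the rational polytope $\{\Area=1\}$ whose vertices are rational and realised by clearing denominators. But there is a genuine gap at exactly the point you wave away with ``together with any further local conditions from the definition of an essential map''. Membership in $\Irred(X)$ requires the map $Y\to X$ to be \emph{essential}: after Stallings folding, the map $Y\to\bar Y$ must be a homotopy equivalence on 1-skeleta, i.e.\ $\Gamma_Y\to\Gamma_X$ must be $\pi_1$-injective. This is a global condition and cannot be encoded by a choice of admissible link types. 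As described, your blocks record links of $Y$ embedding into links of $X$, which captures only branched immersions; but elements of $\Irred(X)$ need not be branched immersions (the paper's example is a fine triangulation of a surface whose 1-skeleton has been folded), so extremising over immersions alone may miss the extremum. If instead you allow arbitrary link maps, the cone contains points coming from non-essential maps, and the realisation step (assembling local pieces from an integer point) produces complexes whose maps to $X$ need not be essential, hence do not lie in $\Irred(X)$ --- so neither direction of the correspondence between the polytope and $\Irred(X)$ holds. The paper's central technical device exists precisely to close this gap: an \emph{origami} (an equivalence relation on edges whose associated vertex and edge graphs are forests) is a locally checkable certificate that a non-immersive graph morphism is $\pi_1$-injective (Theorem \ref{thm: Essential origamis}), and the vertex blocks carry this extra data --- two graphs $L(\beta)\to\bar L(\beta)\hookrightarrow\Lk_X(x_\beta)$ together with relations $\sim_O,\sim_C$ subject to tree/forest conditions --- so that integer points of the cone correspond to essential maps equipped with such certificates, and the surjectivity needed for the realisation step genuinely holds.

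You also locate the main difficulty in the wrong place. In the paper's formulation, irreducibility of the domain $Y$ is a purely local condition on links (connected, at least two vertices, no vertices of valence one), so it is handled simply by restricting the allowed link types; no analysis of reduction moves, no projections between polytopes, and no monotonicity of $\kappa$ under collapses is required, and it is far from clear that such an argument could be made to work (collapsing a free face or splitting a local cut vertex changes $\Area$ and $\chi$ in ways that do not obviously push $\kappa$ in a fixed direction). The genuinely hard part --- certifying, by data visible in the finite blocks, that the assembled map is essential --- is the part your proposal leaves out.
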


As long as $X$ itself is irreducible, $\Irred(X)$ is non-empty because $X\in\Irred(X)$. See also \cite[Lemma 3.10]{louder_uniform_2024} or \cite[Lemma 4.5]{wilton_rational_2024} for weaker conditions under which $\Irred(X)$ is non-empty. By convention,  $\rho_+(X)=-\infty$ and $\rho_-(X)=\infty$ if $\Irred(X)$ is empty. 

Theorem \ref{introthm: Rationality for irreducible curvature} plays a foundational role in a programme to study 2-complexes outlined in \cite{wilton_rational_2024}. The reader is referred to that paper for motivation and applications of this theorem, as well as its companion Theorem \ref{introthm: Rationality for surface curvature} below. We describe a few applications next, although many further applications are conjectured in \cite{wilton_rational_2024}.

If $\rho_+(X)\leq 0$ then $X$ is said to have \emph{non-positive irreducible curvature}, and likewise if $\rho_+(X)< 0$ then $X$ is said to have \emph{negative irreducible curvature}. Both of these curvature bounds have consequences for the topology of $X$ and for the structure of its fundamental group: non-positive irreducible curvature implies that $X$ is aspherical,  $\pi_1(X)$ is locally indicable, and every finitely generated subgroup of $\pi_1(X)$ has finite second Betti number.  Negative irreducible curvature implies that $\pi_1(X)$ is 2-free, coherent, and every one-ended finitely generated subgroup is co-Hopf and large. Thus, Theorem \ref{introthm: Rationality for irreducible curvature} demonstrates that the invariant $\rho_+(X)$ provides an effective sufficient condition for these properties. 

Upper bounds on irreducible curvature are closely related to the curvature properties for 2-complexes used by Wise \cite{helfer_counting_2016,wise_coherence_2022} and Louder--Wilton \cite{louder_stackings_2017,louder_one-relator_2020,louder_negative_2022,louder_uniform_2024} to study one-relator groups. Non-positive irreducible curvature implies Wise's \emph{non-positive immersions} property, while negative irreducible curvature implies the \emph{(uniform) negative immersions} property studied by Louder--Wilton. In \cite[Problem 1.8]{wise_coherence_2022}, Wise asked whether non-positive immersions can be recognised algorithmically. Theorem \ref{introthm: Rationality for irreducible curvature} does not answer Wise's question, since there are 2-complexes that have non-positive immersions but do not have non-positive irreducible curvature \cite{blufstein_2-complex_2023}. Nevertheless, Theorem \ref{introthm: Rationality for irreducible curvature} can be thought of as providing a positive answer to Wise's question for close cousins of the non-positive and negative immersions conditions. 

As well as implying that the curvature bounds $\rho_\pm(X)$ are computable, Theorem \ref{introthm: Rationality for irreducible curvature} also asserts that they are \emph{realised}, and this fact is expected to play a role in the deepest applications of this work. A similar realisation result is at the heart of the main theorem of \cite{louder_uniform_2024}, that one-relator groups with negative immersions are coherent. The realisation of $\rho_-(X)$ is also used in the remarkable fact that irreducible curvature nearly characterises surfaces among branched 2-complexes.  An easy argument (essentially the Riemann--Hurwitz theorem) implies that, if the realisation of $X$ is homeomorphic to a surface, then $X$ has \emph{constant} irreducible curvature, meaning that $\rho_+(X)=\rho_-(X)$. The following corollary of Theorem \ref{introthm: Rationality for irreducible curvature} provides a partial converse.

\begin{lettercor}\label{introcor: Constant irreducible curvature}
If $X$ is an irreducible branched 2-complex and $\rho_+(X)=\rho_-(X)$ then $X$ is irrigid. In particular, there is an essential map from a surface $S\to X$ such that $\kappa(S)=\kappa(X)$.
\end{lettercor}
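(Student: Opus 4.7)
The approach is to reduce everything to the rational LP furnished by Theorem~\ref{introthm: Rationality for irreducible curvature} and to exploit the collapse of the two extrema.

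First, because $X$ is itself irreducible we have $X\in\Irred(X)$, so $\rho_-(X)\leq\kappa(X)\leq\rho_+(X)$, and the hypothesis $\rho_+(X)=\rho_-(X)$ forces $\kappa(X)=\rho_+(X)=\rho_-(X)$. By Theorem~\ref{introthm: Rationality for irreducible curvature}, the linear functional computing $\kappa$ attains both its maximum and its minimum on the feasible polytope $P$ of the associated rational LP, and these attained values coincide. Hence $\kappa$ is \emph{constant} on $P$, and every $Y\in\Irred(X)$ satisfies $\kappa(Y)=\kappa(X)$.

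To deduce irrigidity, I would argue that there are feasible points of the LP genuinely different from the one corresponding to $X$. The realisers $Y_{\max}$ and $Y_{\min}$ produced by Theorem~\ref{introthm: Rationality for irreducible curvature}(ii) provide essential maps $Y_{\pm}\to X$ with the same average curvature as $X$, and, unless $P$ is a single point (in which case $X$ itself must saturate every constraint of the LP and irrigidity can be read off directly), at least one of these realisers will not be isomorphic to $X$ over $X$ and so witnesses irrigidity in the sense defined in \cite{wilton_rational_2022}.

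For the surface conclusion, the plan is to identify a face $F\subseteq P$ parametrising branched 2-complexes that are (disjoint unions of) honest surfaces. Being a surface amounts, in the branched 2-complex framework, to the vanishing of the ramification divisor together with the balancing of angles at every vertex; each of these is a linear condition on the LP variables and so cuts out a rational sub-polytope of $P$. Since $\kappa$ is constant on $P$, every rational point of this sub-polytope is automatically a realiser of $\kappa(X)$, and Theorem~\ref{introthm: Rationality for irreducible curvature}(i) guarantees rational such points once non-emptiness is established. Producing one of them then yields an essential map $S\to X$ from a surface with $\kappa(S)=\kappa(X)$.

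The main obstacle, as I see it, is the non-emptiness of this distinguished sub-polytope: one needs to argue that among the extreme points of $P$ there is one corresponding to a surface. The idea would be to take any extreme point of $P$ and, using the hypothesis $\rho_+=\rho_-$ together with the constancy of $\kappa$, repeatedly apply moves that drive ramification to zero while staying in $P$ (so preserving $\kappa$), terminating at a surface. Making this reduction precise inside the explicit LP of Theorem~\ref{introthm: Rationality for irreducible curvature} is the only non-formal step; everything else is a direct consequence of the theorem together with $X\in\Irred(X)$.
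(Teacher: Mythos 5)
The paper does not actually contain a proof of this corollary: the author explicitly defers to \cite{wilton_rational_2022} both for the definition of \emph{irrigid} and for the proof, noting only that ``besides Theorem~\ref{introthm: Rationality for irreducible curvature}, the techniques of \cite{wilton_one-ended_2011,wilton_essential_2018} provide the main ingredients.'' So there is no in-paper argument to compare against, but your proposal can still be assessed on its own terms, and it has a genuine gap at exactly the step that makes this corollary hard.

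The opening moves are sound: since $X\in\Irred(X)$ when $X$ is irreducible, $\rho_-(X)\leq\kappa(X)\leq\rho_+(X)$, and equality of the two extrema forces $\kappa$ to be constant on the feasible cone $C(\R)$. But from there the proposal drifts. For irrigidity you argue ``unless $P$ is a single point \ldots at least one of these realisers will not be isomorphic to $X$,'' but the realisers $Y_{\max}, Y_{\min}$ of Theorem~\ref{introthm: Rationality for irreducible curvature}(ii) are only guaranteed to exist, not to differ from $X$, and constancy of $\kappa$ says nothing about the geometry of $P$; you have not actually produced the witness to irrigidity. More seriously, the surface conclusion rests on the assertion that ``being a surface amounts to \ldots linear conditions on the LP variables'' and that one can ``repeatedly apply moves that drive ramification to zero while staying in $P$.'' Neither is justified. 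In the framework of \S\ref{sec: A linear system}, the surface condition corresponds to restricting to the vertex blocks whose $L(\beta)$ are disjoint unions of circles; this is a coordinate subcone of $C(\R)$, not a linear slice, and you have no argument that it is non-empty, let alone that one can reach it from an arbitrary feasible point by moves preserving feasibility. Non-emptiness of $\Surf(X)$ is precisely the content of the essential-surfaces theorem of \cite{wilton_essential_2018} (equivalently, via Corollary~\ref{introcor: Minimal curvatures are equal}, that $\sigma_-(X)=\rho_-(X)<\infty$), and no elementary manipulation of the LP supplies it. You have correctly located the obstacle but not overcome it, and the mechanism you sketch for overcoming it is not the one that works.
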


The reader is referred to \cite[Theorem D]{wilton_rational_2024} for the proof of Corollary \ref{introcor: Constant irreducible curvature}, as well as the definition of an irrigid 2-complex. Besides Theorem \ref{introthm: Rationality for irreducible curvature}, the techniques of \cite{wilton_one-ended_2011,wilton_essential_2018} provide the main ingredients of the proof.

Since any branched 2-complex $Y$ can be simplified to make it either empty or irreducible, $\Irred(X)$ is the largest class of complexes mapping to $X$ that we can usefully consider. At the other extreme, the smallest class of (branched) 2-complexes that we can usefully consider consists of closed surfaces.  This motivates the definition of the \emph{maximal surface curvature}
\[
\sigma_+(X):=\sup_{Y\in\Surf(X)}\kappa(Y)\,,
\]
where  $\Surf(X)$ consists of all branched 2-complexes $Y$ with realisations homeomorphic to closed surfaces, equipped with an essential map to $X$. Again, the corresponding infimum, $\sigma_-(X)$, is called the \emph{minimal surface curvature}. Our second main theorem is the analogue of Theorem \ref{introthm: Rationality for irreducible curvature} for $\sigma_+(X)$ and $\sigma_-(X)$.

\begin{letterthm}[Rationality theorem for surface curvatures]\label{introthm: Rationality for surface curvature}
If $X$ is a finite, 2-dimensional cell complex and $\Surf(X)$ is non-empty, then:
\begin{enumerate}[(i)]
\item the curvature invariants $\sigma_+(X)$ and $\sigma_-(X)$ are the maximum and minimum, respectively, of an explicit rational linear-programming problem; furthermore,
\item $\sigma_+(X)$ is attained by some $Y_{\max}\in\Surf(X)$ and $\sigma_-(X)$ is attained by some $Y_{\min}\in\Surf(X)$.
\end{enumerate}
In particular, $\sigma_\pm(X)\in\Q$ and can be computed algorithmically from $X$.
\end{letterthm}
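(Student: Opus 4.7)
The plan is to follow the blueprint of Theorem \ref{introthm: Rationality for irreducible curvature} and adapt it by imposing additional linear constraints that cut out the branched 2-complexes whose realisation is a closed surface. Crucially, any $Y \in \Surf(X)$ is automatically irreducible (a closed surface has no free edges and no locally separating vertices), so $\Surf(X) \subseteq \Irred(X)$, and the surface polytope should arise by restricting the irreducible-curvature polytope by further linear equations. The variables of the LP count finitely many local ``piece types'' at the 2-cells and corners of $X$; the Euler characteristic $\chi(Y)$ and $\Area(Y)$ are explicit linear functionals of these counts; and one extremises the linear-fractional objective $\kappa(Y)=\chi(Y)/\Area(Y)$ over the polytope, which reduces to an honest rational LP by normalising $\Area(Y)=1$.

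The step specific to Theorem \ref{introthm: Rationality for surface curvature} is to identify which extra constraints define the surface sub-polytope. Two families suffice. First, every $1$-cell of $Y$ must be traversed by exactly two $2$-cell sides, which is a balance equality among the edge-gluing variables. Second, at each vertex $v$ of $X$, the multiset of link corners contributed by the pieces must form a $2$-regular (multi-)subgraph of $\Lk_X(v)$; this is a system of degree equalities at the edges of the link graph. Any finite $2$-regular graph decomposes uniquely into disjoint cycles, and declaring each cycle component to be a single vertex of the assembled $Y$ yields a disc link. Together these conditions exactly characterise the branched $2$-complexes whose realisation is a closed surface.

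The main obstacle, as in Theorem \ref{introthm: Rationality for irreducible curvature}, will be the realisation step: showing that any rational point in the surface sub-polytope lifts, after clearing denominators, to a genuine $Y \in \Surf(X)$ whose curvature matches the LP value. My plan is to assemble pieces according to the integer multiplicities indicated by a rational vertex of the polytope, to pair up edge sides via any matching consistent with the balance equations, and to aggregate corners at each vertex of $X$ into the cycle components of the $2$-regular link graph, each of which becomes a disc-link vertex of $Y$. Essentiality, Euler characteristic, and area then match by construction, and the realisation of $Y$ is a closed surface. Once realisation is secure, the attainment claim (ii) is automatic because the extrema of a rational LP over a rational polytope occur at rational vertices, and statement (i) together with rationality and algorithmic computability of $\sigma_\pm(X)$ then follows from standard LP theory. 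The most delicate point is that the matchings at edges and the cycle decompositions at vertex links are not unique, so a combinatorial argument is needed to confirm that at least one choice yields a $Y$ to which the essential-map machinery of \cite{wilton_rational_2022} applies, rather than a degenerate assembly.
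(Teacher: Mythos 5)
Your high-level plan — encode branched $2$-complexes mapping to $X$ by counting finitely many local piece types, observe $\chi$ and $\Area$ are linear in the counts, normalise $\Area=1$ and optimise — matches the paper's strategy, and your observation that $\Surf(X)\subseteq\Irred(X)$ (because circle links automatically satisfy the visibly-irreducible conditions) is correct. The paper does not restrict the irreducible polytope by imposing $2$-regularity equations at each link, though: instead it introduces a parameter $\Pi$, a ``suitable'' set of graphs, builds a cone whose variables are \emph{vertex blocks} $\beta$ whose link datum $L(\beta)$ is a disjoint union of graphs in $\Pi$, and proves a single general rationality theorem (Theorem~\ref{thm: General rationality theorem}). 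Taking $\Pi=\{\text{circles}\}$ gives Theorem~\ref{introthm: Rationality for surface curvature} and taking $\Pi=\{\text{connected core graphs with}\geq 2\text{ vertices}\}$ gives Theorem~\ref{introthm: Rationality for irreducible curvature}. Your approach amounts to forcing the variables to concentrate on circle-link blocks, which is the same thing viewed as a face of the irreducible cone, so this difference is cosmetic.

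The genuine gap in your proposal is the realisation/essentiality step. You assert ``Essentiality, Euler characteristic, and area then match by construction,'' and then immediately retreat to saying ``a combinatorial argument is needed'' to rule out degenerate assemblies. That buried caveat is where nearly all the work of the paper lives. An arbitrary assembly of local pieces yields a branched $2$-complex $Y$ and a combinatorial morphism $Y\to X$, but \emph{essential} means the map of $1$-skeleta $\Gamma_Y\to\Gamma_X$ is $\pi_1$-injective after folding, and this cannot be certified locally just by matching corners and $2$-cell sides. The paper's resolution is the notion of an \emph{origami} (Definition~\ref{def: Origami}) and Theorem~\ref{thm: Essential origamis}, which provides a local certificate of $\pi_1$-injectivity. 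The vertex blocks of the LP therefore carry origami data ($\sim_O$ and $\sim_C$ equivalence relations satisfying tree/forest and consistency conditions), and the gluing equations are formulated so that an integer point of the cone reassembles into a $Y$ together with a compatible essential origami, guaranteeing the resulting map is essential. Without something playing this role, your proposed realisation step does not go through: a matching of edge sides consistent with the balance equations can easily produce a map $Y\to X$ that folds nontrivially on $\pi_1$ of the $1$-skeleton, and then $Y\notin\Surf(X)$ in the sense required. You would need to discover origamis, or an equivalent local device certifying essentiality, to close this gap.
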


The question of when $\Surf(X)$ is empty is more subtle than for $\Irred(X)$ since, on the face of it, there could be irreducible 2-complexes with no essential maps from surfaces. However, the main theorem of \cite{wilton_essential_2018} implies that $\Surf(X)$ is non-empty whenever $\Irred(X)$ is, in particular, whenever $X$ is itself irreducible.  (See also Remark \ref{rem: Relation to essential surfaces for graph pairs} below.) Again, by convention, $\sigma_+(X)=-\infty$ and $\sigma_-(X)=\infty$ if $\Surf(X)$ is empty.

Theorems \ref{introthm: Rationality for irreducible curvature} and \ref{introthm: Rationality for surface curvature} are in fact both special cases of the more general Theorem \ref{thm: General rationality theorem}, which proves rationality for any invariants defined by essential maps from branched 2-complexes defined by certain conditions on the links of vertices.

Alongside $\rho_\pm(X)$, the invariants $\sigma_\pm(X)$ also play central roles in the programme described in \cite{wilton_rational_2024}, and the reader is again referred to that paper for further details and motivation.  We record one especially important consequence here: contrary to initial appearances, the above definitions only give three invariants rather than four.

\begin{lettercor}\label{introcor: Minimal curvatures are equal}
For any finite 2-complex $X$, $\rho_-(X)=\sigma_-(X)$.
\end{lettercor}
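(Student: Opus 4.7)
The proof splits into the inequalities $\rho_-(X) \leq \sigma_-(X)$ and $\sigma_-(X) \leq \rho_-(X)$. For the first, any $Y \in \Surf(X)$ has realisation a closed surface and therefore no free edges and no locally separating vertices, so $Y$ is irreducible. Hence $\Surf(X) \subseteq \Irred(X)$, and taking infima gives $\rho_-(X) \leq \sigma_-(X)$.

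For the reverse inequality, I would use Theorem \ref{introthm: Rationality for irreducible curvature}(ii) to choose $Y_{\min} \in \Irred(X)$ with $\kappa(Y_{\min}) = \rho_-(X)$. A useful preliminary observation is that $\rho_-(Y_{\min}) = \kappa(Y_{\min})$: for any $Z \in \Irred(Y_{\min})$, the composition of the essential maps $Z \to Y_{\min} \to X$ is itself essential, so $Z \in \Irred(X)$ and therefore $\kappa(Z) \geq \rho_-(X) = \kappa(Y_{\min})$; the reverse inequality is immediate from $Y_{\min} \in \Irred(Y_{\min})$. Now apply the essential surface theorem of \cite{wilton_essential_2018} to $Y_{\min}$ to produce a closed surface $S$ and an essential map $S \to Y_{\min}$. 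If one can show $\kappa(S) \leq \kappa(Y_{\min})$, then combining with the inequality $\kappa(S) \geq \rho_-(Y_{\min}) = \kappa(Y_{\min})$ (since closed surfaces are irreducible, $S \in \Irred(Y_{\min})$) forces equality; composing $S \to Y_{\min} \to X$ then lands $S$ in $\Surf(X)$ with $\kappa(S) = \rho_-(X)$, giving $\sigma_-(X) \leq \rho_-(X)$.

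The main obstacle is establishing the bound $\kappa(S) \leq \kappa(Y_{\min})$ for the essential surface $S$ produced over the minimiser $Y_{\min}$. The natural route is a Riemann--Hurwitz-type identity, analogous to the one mentioned in the paragraph preceding Corollary \ref{introcor: Constant irreducible curvature}: essential maps from surfaces to branched 2-complexes should behave like branched covers in the appropriate combinatorial sense, and so preserve the ratio $\chi / \Area$. Extracting this identity would likely require unpacking the essential surface construction of \cite{wilton_essential_2018} in the branched setting. An alternative, more structural route would be to argue directly that $Y_{\min}$ has constant irreducible curvature ($\rho_+(Y_{\min}) = \rho_-(Y_{\min})$) and then apply Corollary \ref{introcor: Constant irreducible curvature}; however, the composition argument only yields one-sided information about $\rho_+(Y_{\min})$, so this route would require a sharper analysis of the LP minimiser from Theorem \ref{introthm: Rationality for irreducible curvature}.
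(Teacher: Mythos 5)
The paper does not give a proof of this corollary: it explicitly defers to \cite[Theorem A]{wilton_rational_2022}, noting that the argument is similar to that of Corollary \ref{introcor: Constant irreducible curvature}, which in turn relies on the techniques of \cite{wilton_one-ended_2011,wilton_essential_2018}. So there is nothing in-paper to compare your argument against, and the review below addresses the internal soundness of your sketch.

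Your first inequality, $\rho_-(X)\leq\sigma_-(X)$, is correct: closed surfaces are visibly irreducible (a valence-$0$ or valence-$1$ vertex cannot have a circle for a link once the attaching map is required to be an immersion, and a circle link is connected with all valences $2$), so $\Surf(X)\subseteq\Irred(X)$. The observation that $\rho_-(Y_{\min})=\kappa(Y_{\min})$ by composing essential maps is also fine. But the step you flag as the ``main obstacle'' is a genuine gap, and the Riemann--Hurwitz route you propose will not close it. The Riemann--Hurwitz observation in the paper applies only when the \emph{codomain} is a surface: if $X$ has realisation a closed surface, then every essential $Y\to X$ preserves $\kappa$, which is why $\rho_+(X)=\rho_-(X)$ in that case. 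In your setting the codomain $Y_{\min}$ is a general irreducible complex, and an essential map $S\to Y_{\min}$ from a surface is not a branched cover in any useful sense; $\kappa(S)$ can lie anywhere in $[\sigma_-(Y_{\min}),\sigma_+(Y_{\min})]$, which is typically a non-degenerate interval, so no identity $\kappa(S)=\kappa(Y_{\min})$ is available. Your alternative route also does not close the gap, since there is no reason the $\rho_-$-minimiser $Y_{\min}$ should satisfy $\rho_+(Y_{\min})=\rho_-(Y_{\min})$. What is actually needed is a way of extracting a surface of the correct curvature from the structure of the LP minimiser itself, and this is what the irrigidity machinery of \cite{wilton_rational_2022}, built on \cite{wilton_one-ended_2011,wilton_essential_2018}, supplies; the bare existence theorem of \cite{wilton_essential_2018} gives no control over the curvature of the surface it produces.
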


See \cite[Theorem A]{wilton_rational_2024} for the proof, which is similar to Corollary \ref{introcor: Constant irreducible curvature}.

\begin{remark}\label{rem: Relation to essential surfaces for graph pairs}
As a special case, Corollary \ref{introcor: Minimal curvatures are equal} implies the fact mentioned above that $\Surf(X)$ is empty if and only if $\Irred(X)$ is empty: $\Irred(X)$ is empty if and only if $\rho_-(X)=\infty$ while $\Surf(X)$ is empty if and only if $\sigma_-(X)=\infty$.
\end{remark}

Rationality theorems have played important roles in geometric group theory over the last two decades, starting with Calegari's proof that stable commutator length is rational in free groups \cite{calegari_stable_2009}\footnote{See \cite{wilton_rational_2024} for a discussion of the relationship between stable commutator length in free groups and the surface curvatures of 2-complexes}, and continuing with \cite{wilton_essential_2018} and \cite{louder_uniform_2024}, in which they were applied to analyse the subgroup structures of various classes of finitely presented groups.  All these rationality theorems deal with families of maps $Y\to X$, and one major difficulty is to ensure that the maps being considered are homotopically non-trivial.

Calegari's maps in \cite{calegari_stable_2009} are automatically homotopically non-trivial because they represent a non-trivial second relative homology class.  The rationality theorems of \cite{wilton_essential_2018} and \cite{louder_uniform_2024} apply to \emph{face-essential maps} (in the terminology of \cite{louder_uniform_2024}). 

Here, we are concerned with the more restrictive class of \emph{essential} maps\footnote{Again, see Definition \ref{def: Essential map} for a precise definition.}, which are required to be injective on fundamental groups of 1-skeleta. We therefore need a method of locally recognising $\pi_1$-injective maps of graphs.

Such a method is provided by Stallings' famous observation that immersions of graphs are $\pi_1$-injective \cite{stallings_topology_1983}. Furthermore, Stallings' folding lemma (Lemma \ref{lem: Stallings folding lemma} below) asserts that any map of graphs can be folded to an immersion. However, folding the 1-skeleton usually obscures the structure of a 2-complex. For an example, consider a 2-complex $Y'$ which is a fine triangulation of a surface, and then let $Y$ be the result of folding a pair of edges in $Y'$ with one common endpoint. To see that $Y$ is homotopy equivalent to a surface, we need to  \emph{unfold} to $Y'$. Because of these kinds of considerations, we need a criterion to ensure that a map of graphs $\Delta\to\Gamma$ is $\pi_1$-injective, which does not require the map to be an immersion.

The most important technical innovation of this paper is the notion of an \emph{origami}, which provides the needed criterion. An origami $\Omega$ on a graph $\Delta$ can be thought of as a certain kind of topological graph of graphs, with (some of) the vertices of the vertex-spaces equal to the vertex-set of $\Delta$ -- see Definition \ref{def: Origami} for a precise definition. There is a natural combinatorial $\pi_1$-surjection from $\Delta$ to the underlying graph $\Delta/\Omega$ of this graph of graphs.  The idea behind the definition of an origami is that it keeps track of a sequence of folds that compose to give the map $\Delta\to\Delta/\Omega$.

More importantly, an origami can also detect whether the associated map is $\pi_1$-injective. An origami $\Omega$ is called \emph{essential} if every vertex-space of $\Omega$ is a tree, and is said to be \emph{compatible} with a map of graphs $f:\Delta\to\Gamma$ if $f$ factors through $\Delta/\Omega$, and the induced map $\Delta/\Omega\to\Gamma$ is an immersion. The next theorem, which is the most important theorem about origamis, encapsulates the fact that they can be used to certify that maps of graphs are $\pi_1$-injective. I believe it to be of independent interest.

\begin{letterthm}\label{thm: Essential origamis}
A map of finite, connected, core graphs $f:\Gamma\to\Delta$ is injective on fundamental groups if and only if there is an essential origami $\Omega$ on $\Delta$ that is compatible with $f$.
\end{letterthm}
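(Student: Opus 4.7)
The plan is to treat the two directions separately, with Stallings' folding lemma as the key technical ingredient in both.

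The backward direction is the easier one. Suppose $\Omega$ is an essential origami compatible with $f$, so that $f$ factors as the natural collapse map $\Delta\to\Delta/\Omega$ followed by an immersion $\Delta/\Omega\to\Gamma$. By Lemma \ref{lem: Stallings folding lemma}, the immersion piece is $\pi_1$-injective, so it suffices to show that the collapse is too. This is precisely the content of essentialness: $\Delta$ carries the structure of a graph of spaces over $\Delta/\Omega$ whose vertex-spaces are trees, and the Bass--Serre-theoretic computation with trivial vertex groups shows that $\pi_1(\Delta)$ injects into $\pi_1(\Delta/\Omega)$ (indeed becomes an isomorphism once the edge-space data encoded in $\Omega$ is taken into account). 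Composing the two $\pi_1$-injections gives the desired conclusion.

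For the forward direction, I would construct $\Omega$ by bookkeeping a Stallings folding sequence. Fold $f$ to obtain a chain $\Delta = \Delta_0 \to \Delta_1 \to \cdots \to \Delta_n = \bar\Delta$ whose composite with a final map $\bar\Delta\to\Gamma$ recovers $f$ and makes $\bar\Delta\to\Gamma$ an immersion. Each intermediate step identifies a pair of edge-ends meeting at a common vertex, and I would record this data as an origami $\Omega$ on $\Delta$ with $\Delta/\Omega=\bar\Delta$: each fold contributes an arc in the vertex-space above the appropriate vertex of $\bar\Delta$, joining the two edge-ends being identified, and accumulating these arcs over the whole sequence defines all vertex-spaces. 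Compatibility of $\Omega$ with $f$ is then built into the construction.

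The main obstacle, and the heart of the argument, is to prove that this origami is essential whenever $f_*$ is injective. I would proceed by contrapositive: a simple cycle in some vertex-space over $v\in\bar\Delta$ can be lifted, using the folds that created its edges, to a loop $\gamma$ in $\Delta$ passing through successive preimages of $v$; by construction $\gamma$ collapses to a constant loop in $\bar\Delta$ and hence is killed by $f_*$. The core-graph hypothesis on $\Delta$, together with simplicity of the cycle, prevents $\gamma$ from being null-homotopic in $\Delta$ itself, exhibiting a non-trivial kernel element and contradicting $\pi_1$-injectivity. The technical subtleties concern the consistent choice of lifts when several folds share a vertex, and the fact that later folds can identify edges that were themselves created by earlier folds; these are best handled by induction on the length of the folding sequence, maintaining as an invariant that every cycle in the partial vertex-spaces corresponds to a genuine kernel element of the corresponding partial folding map. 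Once essentialness is secured, the theorem follows.
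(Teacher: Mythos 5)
Your ``if'' direction contains the main gap: it begs the question. Essentiality of $\Omega$ is, by Definition \ref{def: Origami}, a condition on the auxiliary graphs $V_\Omega$ and $E_\Omega$, whose vertex sets consist partly of equivalence classes $[e]_O$, $[e]_C$; these are not subgraphs of $\Delta$, and $\Delta$ is not the total space of the associated graph of spaces $X_\Omega$ (the vertices of $\Delta$ merely sit inside $V_\Omega$). So the assertion that ``$\Delta$ carries the structure of a graph of spaces over $\Delta/\Omega$ whose vertex-spaces are trees'' is not ``the content of essentialness''; it is essentially a restatement of what must be proved, namely that $q:\Delta\to\Delta/\Omega$ is a composition of essential folds, i.e.\ that $\Delta$ is obtained from $\Delta/\Omega$ by blowing vertices up into trees. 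Even granting such a structure, Bass--Serre theory with trivial vertex and edge groups only computes $\pi_1(X_\Omega)\cong\pi_1(\Delta/\Omega)$; it gives no information about injectivity of $q_*$ unless one has already identified $\pi_1(\Delta)$ with $\pi_1(X_\Omega)$ compatibly with the collapse, which is exactly the point at issue (and is false for general, non-essential origamis, where $q$ can even be a non-trivial covering map). In the paper this implication is Theorem \ref{thm: Essential origamis give injective maps of groups}, whose proof is the technical heart of the section: Lemma \ref{lem: Producing a foldable pair} uses the forest hypotheses on $E_\Omega$ and $V_\Omega$ to show that whenever $q$ is not an immersion there are edges $a_1,a_2$ with $\iota(a_1)=\iota(a_2)$ and $a_1\sim_O a_2$, and Lemma \ref{lem: Folding origamis} shows that folding such a pair is an essential fold and that $\Omega$ descends to an essential origami on the folded graph; one then inducts. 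Your proposal offers no substitute for these steps. (A small additional slip: $\pi_1$-injectivity of immersions is Stallings' result on immersions, not the folding lemma you cite.)

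Your ``only if'' direction has the right top-level strategy -- bookkeeping a folding sequence and inducting on its length, which is how the paper proceeds via Proposition \ref{prop: Essential origamis realise homotopy equivalences} -- but the two points you defer are where the work lies, and the contrapositive you propose is both unnecessary and not obviously salvageable as stated. First, ``accumulating arcs'' does not by itself define the open relation $\sim_O$ on $E_\Delta$, let alone verify non-singularity and global and local consistency; in the paper this is Lemma \ref{lem: Unfolding origamis}, whose case (c) -- deciding whether an edge mapping into $[\opp{a}]_O$ becomes open-equivalent to $\opp{a}_1$ or $\opp{a}_2$, using the link partition of Remark \ref{rem: Unfolding and partitions} and paths in the forest $V_{\Omega'}$ -- is precisely the choice your sketch elides, and the subsequent verification of local consistency is a substantial path argument. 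Second, the claim that a simple cycle in a vertex-space lifts to a loop in $\Delta$ that is non-trivial ``because $\Delta$ is core and the cycle is simple'' is unsubstantiated: consecutive edges of a cycle in $V_\Omega$ meeting at a closed vertex $[e]_C$ are edges of $\Delta$ with distinct termini in general, so the lift requires connecting paths in $\Delta$, and non-triviality of the resulting loop in $\pi_1(\Delta)$ does not follow from simplicity of the cycle alone. The paper's route avoids this entirely: injectivity of $f_*$ forces every fold in the Stallings factorisation to be essential, and one pulls the trivial origami back through the folds one at a time, with essentiality preserved at each step by Lemma \ref{lem: Unfolding origamis}.
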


The first two sections of the paper are concerned entirely with graphs. In \S\ref{sec: Graphs} we recall the basic formalism of graphs, especially Serre graphs, which will be our primary concern. Origamis are introduced and studied in \S\ref{sec: Origamis}, leading up to the proof of Theorem \ref{thm: Essential origamis}. The basics of 2-dimensional cell complexes, including branched 2-complexes and essential maps, are introduced in \S\ref{sec: Essential maps of 2-complexes}. The rational cone $C(\R)$ that underlies the proofs of Theorems \ref{introthm: Rationality for irreducible curvature} and \ref{introthm: Rationality for surface curvature} is constructed in \S\ref{sec: A linear system}. Finally, in \S\ref{sec: Curvature invariants}, the invariants $\rho_\pm(X)$ and $\sigma_\pm(X)$ are defined, and it is shown that they can be seen as extremal values of a projective function $\kappa$ on $C(\R)$, which completes the proof.

\subsection*{Acknowledgements}

Thanks are due, as ever, to Lars Louder for useful conversations.  I am also very grateful to Doron Puder and Niv Levhari for pointing out an error in an earlier version. Finally, thanks to the anonymous referee, whose comments have greatly improved the exposition.

\section{Graphs}\label{sec: Graphs}

This section introduces the graphs that will play a role in the argument. The material is all standard, but the details of the definitions will be important in the new material on origamis in the next section.

We start with the simplest kind of graph. A \emph{directed graph} $\Gamma$ consists of a vertex set $V_\Gamma$, an edge set $E_\Gamma$, an \emph{initial map} $\iota=\iota_\Gamma: E_\Gamma\to V_\Gamma$, and a \emph{terminus map} $\tau=\tau_\Gamma:E_\Gamma\to V_\Gamma$.

Most of our graphs will be Serre graphs. Following Serre \cite{serre_arbres_1977} and Stallings \cite{stallings_topology_1983}, a Serre graph, or usually just a \emph{graph}, is a directed graph $\Gamma$ together with a fixed-point-free involution $e\mapsto\opp{e}$ on $E_\Gamma$, such that
\[
\tau(e)=\iota(\opp{e})
\]
for all $e\in E_\Gamma$. The elements $e$ of $E_\Gamma$ should be regarded as \emph{oriented} edges of $\Gamma$, while an \emph{unoriented} or \emph{geometric} edge is a pair $\{e,\opp{e}\}$. 

The reader is referred to the paper of Stallings \cite{stallings_topology_1983} for further facts, constructions and terminology, but we adapt and synthesise a few important notions here.

The \emph{link} (which Stallings calls the \emph{star}) of a vertex $v$ of $\Gamma$ is the subset
\[
\Lk_\Gamma(v)=\iota^{-1}(v)\,.
\]
A morphism of graphs $f:\Delta\to\Gamma$ naturally induces a map of links $df_v:\Lk_\Delta(v)\to\Lk_\Gamma(f(v))$. The morphism $f$ is an \emph{immersion} if $df_v$ is injective for all $v$, and in this case the induced map on fundamental groups is injective \cite[Proposition 5.3]{stallings_topology_1983}. A famous lemma of Stallings provides the basis for analysing morphisms of graphs in terms of their effects on fundamental groups \cite[\S5.4]{stallings_topology_1983}.

\begin{lemma}[Stallings' folding lemma]\label{lem: Stallings folding lemma}
Any morphism of finite graphs $f:\Delta\to \Gamma$ factors as  
\[
\Delta\stackrel{f_0}{\to}\overline{\Delta}\stackrel{\bar{f}}{\to}\Gamma
\]
where $\bar{f}$ is an immersion. Furthermore, the map $f_0$ factors as a finite sequence of folds.
\end{lemma}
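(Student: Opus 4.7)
The plan is to define a single fold move and then argue that iterating it terminates in an immersion, essentially following Stallings. Suppose $f : \Delta \to \Gamma$ fails to be an immersion, so there is a vertex $v \in V_\Delta$ and distinct oriented edges $e_1, e_2 \in \Lk_\Delta(v)$ with $df_v(e_1) = df_v(e_2)$. Define a fold as the quotient of $\Delta$ by the smallest equivalence relation on $V_\Delta \sqcup E_\Delta$ that identifies $e_1 \sim e_2$, $\opp{e_1} \sim \opp{e_2}$, and $\tau(e_1) \sim \tau(e_2)$ (and hence $\iota(\opp{e_1}) \sim \iota(\opp{e_2})$, which is automatic from the previous). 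The resulting object $\Delta'$ inherits a well-defined Serre graph structure, since $\iota, \tau$ and the involution all respect the equivalence, and the fixed-point-free property of the involution is preserved because $e_1 \neq \opp{e_2}$ would follow from $f(e_1) = f(e_2)$ having a common initial vertex and the involution on $\Gamma$ being fixed-point-free.

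Next I would check that $f$ descends through the quotient $q : \Delta \to \Delta'$. This is exactly where the hypothesis $df_v(e_1) = df_v(e_2)$ is used: by construction $f$ sends each pair of identified elements to a single element of $\Gamma$, so there is a unique morphism $f' : \Delta' \to \Gamma$ with $f = f' \circ q$. Thus any non-immersion factors through a fold followed by a morphism to $\Gamma$.

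The termination argument is the routine part: each fold strictly decreases the number of geometric (unoriented) edges of $\Delta$ by exactly one, since the geometric edges $\{e_1, \opp{e_1}\}$ and $\{e_2, \opp{e_2}\}$ are merged into a single geometric edge. Because $|E_\Delta|$ is finite, after finitely many folds $\Delta = \Delta_0 \to \Delta_1 \to \cdots \to \Delta_n = \overline{\Delta}$ we reach a graph on which the induced morphism $\bar f : \overline{\Delta} \to \Gamma$ admits no further fold, i.e.\ $d\bar f_w$ is injective at every vertex $w$ of $\overline{\Delta}$. Setting $f_0 := q_n \circ \cdots \circ q_1$ gives the claimed factorisation $f = \bar f \circ f_0$ with $f_0$ a composition of folds and $\bar f$ an immersion.

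The only real subtlety, and the place I would take most care, is the bookkeeping in defining a single fold: one must check simultaneously that the induced involution on $E_{\Delta'}$ remains fixed-point-free, that the possibility $\tau(e_1) = \iota(e_1)$ or $e_2 = \opp{e_1}$ does not corrupt the quotient, and that the geometric-edge count drops by exactly one regardless of whether $\tau(e_1)$ and $\tau(e_2)$ were already equal. Once these case checks are dispatched, everything else — existence of $f'$, termination, and injectivity of $d\bar f$ — follows formally.
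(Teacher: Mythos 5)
Your proof is correct and is the standard Stallings argument: define a fold that merges two edges with a common initial vertex and equal $f$-images, check the Serre-graph structure and fixed-point-freeness survive the quotient (your observation that $e_2=\opp{e}_1$ would force a fixed point of the involution on $\Gamma$ is exactly the right check), note $f$ descends, and terminate by induction on the number of geometric edges. The paper does not prove this lemma at all — it cites Stallings — and your argument coincides with the classical proof being cited, so there is nothing to reconcile.
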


We recall the definition of a fold, and introduce some notation and terminology.

\begin{definition}[Fold]\label{def: Fold}
Let $\Delta$ be a graph and $a_1,a_2$ a pair of edges with $u=\iota(a_1)=\iota(a_2)$. The corresponding \emph{fold} is the quotient morphism 
\[
f_a:\Delta\to\Delta'
\]
where $\Delta'$ is obtained by identifying the edges $a_1$ and $a_2$ to a single edge $a$. This forces $\opp{a}_1$ and $\opp{a}_2$ to both be identified with $\opp{a}$, but the remaining edges of $\Delta$ correspond bijectively with the remaining edge of $\Delta'$. See Figure \ref{fig: Fold}. The fold $f_a$ is called \emph{essential} if $\tau(a_1)\neq\tau(a_2)$ or, equivalently, if $f_a$ is a homotopy equivalence.
\end{definition}

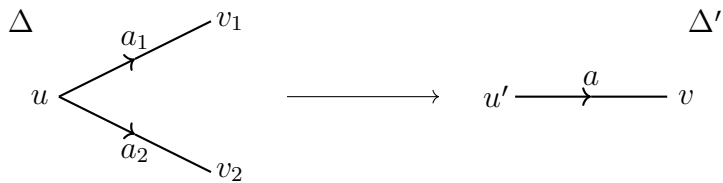
\begin{figure}[htp]
\begin{center}
\begin{tikzpicture}
\coordinate (u) at (-4,0);
\coordinate (v1) at (-2,1);
\coordinate (v2) at (-2,-1);
\coordinate (a1) at (-3,0.5);
\coordinate (a2) at (-3,-0.5);

\coordinate (u') at (2,0);
\coordinate (v) at (4,0);
\coordinate (a) at (3,0);

\coordinate (ulabel) at (-4.25,0);
\coordinate (v1label) at (-1.75,1);
\coordinate (v2label) at (-1.75,-1);
\coordinate (a1label) at (-3,0.75);
\coordinate (a2label) at (-3,-0.75);

\coordinate (u'label) at (1.75,0);
\coordinate (vlabel) at (4.25,0);
\coordinate (alabel) at (3,0.25);

\coordinate (Deltalabel) at (-4.5,1);
\coordinate (Delta'label) at (4.5,1);

\draw[thick,->] (u)-- (a1);\draw[thick,-] (a1) -- (v1);
\draw[thick,->] (u) -- (a2);\draw[thick,-] (a2) -- (v2);

\draw[->] (-1,0) -- (1,0);

\draw[thick,->] (u') -- (a);\draw[thick,-] (a) -- (v);

\node at (ulabel) {$u$};
\node at (v1label) {$v_1$};
\node at (v2label) {$v_2$};
\node at (a1label) {$a_1$};
\node at (a2label) {$a_2$};
\node at (u'label) {$u'$};
\node at (vlabel) {$v$};
\node at (alabel) {$a$};
\node at (Deltalabel) {$\Delta$};
\node at (Delta'label) {$\Delta'$};
\end{tikzpicture}
\end{center}
\caption{A Stallings fold $\Delta\to\Delta'$. Note that we do not in general insist that $v_1\neq v_2$.}
\label{fig: Fold}
\end{figure}

We say that $\Delta'$ is constructed from $\Delta$ by \emph{folding}, and likewise $\Delta'$ is constructed from $\Delta$ by \emph{unfolding}. It is useful to notice that unfolding is characterised by a certain partition of a link of a vertex.

\begin{remark}\label{rem: Unfolding and partitions}
Consider an essential fold $f_a:\Delta\to \Delta'$ as above, and write $v'=\tau(a)$, $v_1=\tau(a_1)$ and $v_2=\tau(a_2)$. The map $f_a$ induces a bijection between $(\Lk_{\Delta}(v_1)\smallsetminus \{\opp{a}_1\}) \sqcup (\Lk_{\Delta}(v_2))\smallsetminus \{\opp{a}_2\})$ and $\Lk_{\Delta'}(v')\smallsetminus\{\opp{a}\}$, and hence induces a partition of $\Lk_{\Delta'}(v')\smallsetminus\{\opp{a}\}$ into two. Conversely, a partition of $\Lk_{\Delta'}(v')\smallsetminus\{\opp{a}\}$ into two provides exactly the data needed to construct the graph $\Delta$ and the fold map $f_a$. Thus, the data of an essential unfolding of the edge $a$ in $\Delta'$ is equivalent to a partition of $\Lk_{\Delta'}(v')\smallsetminus\{\opp{a}\}$ into two sets. 
\end{remark}

One of the uses of Stallings' folding lemma is to construct cores for covering spaces of graphs. We next recall the relevant definitions.

\begin{definition}[Cores and core graphs]\label{def: Cores}
Let $\Gamma$ be a (not necessarily finite) graph. A \emph{core} for $\Gamma$ is a finite subgraph $\Gamma_0$ such that the inclusion map $\Gamma_0\into\Gamma$ is a homotopy equivalence. The graph $\Gamma$ itself is said to be a \emph{core graph} if no proper subgraph is a core for $\Gamma$. It is easy to see that a finite graph is core if and only if it has no vertices of valence 1.
\end{definition}

The power of Stallings' folding lemma lies in the fact that it provides local certificates -- the induced maps on links -- to certify that a morphism of graphs is injective on fundamental groups. However,  we will need to locally certify that a graph morphism $\Delta\to\Gamma$ is injective on fundamental groups while simultaneously remembering the structure of $\Delta$, which presents a problem since the $\pi_1$-injectivity of the folding map $\Delta\to\overline{\Delta}$ cannot be certified on any finite ball in $\Delta$. (Consider, for instance, the map of graphs that folds a circle $\Delta$ with $2n$ edges to a path with $n$ edges.) This problem is solved by origamis, which are the subject of the next section.

Although origamis will be defined by certain equivalence relations, it will be important to think of them as graphs of spaces, in the spirit of Scott and Wall \cite{scott_topological_1979}.  A \emph{graph of spaces} $X$ consists of a (possibly disonnected) vertex space $V_X$, a (necessarily disconnected) edge space $E_X$ equipped with an involution $\cdot\mapsto\opp{\cdot}$ that induces a fixed-point free involution of $\pi_0(E_X)$, and a continuous map $\iota:E_X\to V_X$. The \emph{underlying graph} $\Gamma_X$ of $X$ is obtained by applying the $\pi_0$ functor that takes a space to its set of path components. Adjointly, every graph is a graph of spaces with discrete vertex and edge sets. The \emph{geometric realisation} of $X$ is defined to be
\[
\left(V_X\sqcup (E_X\times [-1,1])\right)/\sim
\]
where $(y,-1)\sim \iota(y)$ and $(y,t)\sim (\opp{y},1-t)$ for all $y\in E_X$.  We will often abuse notation and denote the geometric realisation of $X$ by $X$.

\section{Origamis}\label{sec: Origamis}

In this section we introduce \emph{origamis}, which can provide a local certificate that a morphism of graphs $\Delta\to\Gamma$, which may not be an immersion, is $\pi_1$-injective. They are called `origamis' because they tell you how to fold.

\begin{definition}[Origami]\label{def: Origami}
Let $\Delta$ be a (finite, Serre) graph.  An \emph{origami} $\Omega$ on $\Delta$ is defined by an equivalence relation $\sim_O$ on the edge set $E_\Delta$. We shall call this relation the \emph{open} relation.  The open relation in turn defines the \emph{closed} relation $\sim_C$ on $E_\Delta$ by
\[
e_1\sim_C e_2 \Leftrightarrow \opp{e}_1\sim_O \opp{e}_2\,,
\]
and these two relations in turn allow us to define two auxiliary (directed) graphs. The \emph{edge graph} $E_\Omega$ is a bipartite directed graph, with the following two kinds of vertices:
\begin{enumerate}[(i)]
\item the $\sim_O$-equivalence classes of $E_\Delta$, denoted by $[e]_O$ (called the \emph{open vertices}); and
\item the $\sim_C$-equivalence classes of $E_\Delta$, denoted by $[e]_C$ (called the \emph{closed vertices}).
\end{enumerate}
The edge set of $E_\Omega$ is just $E_\Delta$, the edge set of $\Delta$ .  Each edge $e$ has initial vertex $[e]_O$ and terminal vertex $[e]_C$.

The \emph{vertex graph} $V_\Omega$ is also a bipartite directed graph, with the following two kinds of vertices:
\begin{enumerate}[(i)]
\item the vertex set $V_\Delta$ (called the $\Delta$-vertices);
\item the $\sim_C$-equivalence classes of $E_\Delta$, denoted by $[e]_C$ (again called the \emph{closed} vertices).
\end{enumerate}
The edge set of $V_\Omega$ is $E_\Delta$; the initial vertex of $e\in E_\Delta$ is the $\Delta$-vertex $\iota_\Delta(e)\in V_\Delta$, and its terminal vertex is $[e]_C$.

So far, these are merely notations defined by the equivalence relation $\sim_O$.   To define an origami, three conditions are required on the equivalence relation.
\begin{enumerate}[(a)]
\item\label{item: Non-singularity} \emph{Non-singularity:} For each $e\in E_\Delta$, the edges $e$ and $\opp{e}$ are in different connected components of $E_\Omega$.
\item\label{item: Consistency} \emph{Global consistency:} If $e_1\sim_O e_2$ then $\iota_\Delta(e_1)$ and $\iota_\Delta(e_2)$ are in the same component of $V_\Omega$.
\item\label{item: Edge forest} \emph{Local consistency:} If $e_1,e_2\in [e]_O$ then $e$ does not separate $\iota_\Delta(e_1)$ from $\iota_\Delta(e_2)$ in $V_\Omega$. That is, $\iota_\Delta(e_1)$ and $\iota_\Delta(e_2)$ are in the same component of $V_\Omega\smallsetminus e$.
\end{enumerate} 
If all three of these conditions are satisfied, then $\sim_O$ is said to define an \emph{origami} $\Omega$.

We will be primarily interested in origamis that also satisfy an additional property.

\begin{enumerate}[(a)]
\setcounter{enumi}{3}
\item\label{item: Essential} \emph{Essentiality:} If  the vertex graph $V_\Omega$ and edge graph $E_\Omega$ are forests then $\Omega$ is called \emph{essential}.
\end{enumerate}
\end{definition}

In fact, an origami $\Omega$ naturally defines a graph of spaces $X_\Omega$, with vertex space the realisation of $V_\Omega$ and edge space the realisation of $E_\Omega$, given by continuous maps $\iota_\Omega:E_\Omega\to V_\Omega$ and $\opp{\cdot}:E_\Omega\to E_\Omega$.

\begin{remark}\label{rem: Origami graph of spaces}
The initial map $\iota_\Omega:E_\Omega\to V_\Omega$ is defined non-canonically as follows.

The closed vertices of $E_\Omega$ are also vertices of $V_\Omega$, so we can canonically define $\iota_\Omega$ to be the identity on these; that is, $\iota_\Omega([e]_C)=[e]_C$. There is no canonical choice for the image of an open vertex $[e]_O$, but we may set $\iota_\Omega([e]_O)$ to be the $\Delta$-vertex $\iota_\Delta(e_0)$ of $V_\Omega$ for any fixed choice of representative $e_0\in[e]_O$. Evidently this depends on the choice of representative, but note that the global consistency condition implies that all such choices are in the same path component of $V_\Omega$.

It remains to define $\iota_\Omega$ on an edge $e$ of $E_\Omega$.  Since $[e]_C$ is joined to $\iota_\Delta(e)$ by the edge $e$, and consistency tells us that $\iota_\Delta(e)$ is in the same path component as $\iota_\Delta(e_0)$ for the chosen representative $e_0\in [e]_O$, it follows that there is a choice of path $\gamma_e$ in $V_\Omega$ from $\iota_\Omega([e]_C)=[e]_C$ to $\iota_\Omega([e]_O)=\iota_\Delta(e_0)$. Therefore, we may extend $\iota_\Omega$ continuously across $E_\Omega$ by  parametrising it as $\gamma_e$ on $e$.

To complete the definition of the graph of spaces structure $X_\Omega$, we need to define the involution $\opp{\cdot}$ on $E_\Omega$. This naturally extends the orientation-reversing involution on $E_\Delta$: thought of as an edge of $E_\Omega$, $e$ is sent to $\opp{e}$, while $[e]_O$ is sent to $[\opp{e}]_C$ and $[e]_C$ is sent to $[\opp{e}]_O$. (Note that the latter is well defined by the definition of the closed equivalence relation.) Finally, note that non-singularity of $\Omega$ implies that $\opp{\cdot}$ fixes no elements of $\pi_0(E_\Omega)$, so $X_\Omega$ is indeed  a graph of spaces. 
\end{remark}

\begin{remark}\label{rem: Initial vertex well defined}
Let $e$ be an edge of $\Delta$. Since $\iota_\Omega(e)=\iota_{V_\Omega}(e)=\iota_\Delta(e)$, we will use the notation $\iota(e)$ to denote all of these, without fear of confusion.
\end{remark}

For our purposes, the most important feature of this graph of spaces is the underlying graph.

\begin{definition}[Quotient graph and equivalence]\label{def: Quotient graph}
The underlying graph of $X_\Omega$ is called the \emph{quotient graph} of $\Omega$, and denoted by $\Delta/\Omega$. The quotient notation is justified by the existence of a quotient map $q:\Delta\to\Delta/\Omega$, defined by sending $e\in E_\Delta$ to the path component of $[e]_C$ in $E_\Omega$ and  $v\in V_\Delta$ to the path component of $v$ in $V_\Omega$.   For any edge $e\in E_\Delta$, $q(\opp{e})=\opp{q(e)}$ immediately from the definitions, while $\iota(q(e))=q(\iota(e))$ follows using the fact that $[e]_C$ and $\iota(e)$ are joined by an edge in $V_\Omega$; therefore, $q$ is indeed a morphism of graphs.
\end{definition}

The first example of an origami is the \emph{trivial} origami, which exists on any graph $\Delta$.

\begin{example}[Trivial origami]\label{ex: Trivial origami}
For any finite graph $\Delta$, the \emph{trivial} origami $\Omega$ is defined by taking $\sim_O$ to be equality on $E_\Delta$, meaning that $\sim_C$ is also equality. In this case, the edge graph $E_\Omega$ is just a disjoint union of intervals: each $e\in E_\Delta$ is incident at the vertices $[e]_O$ and $[e]_C$, each of valence 1. In the vertex graph $V_\Omega$, each vertex $v$ of $\Delta$ is incident at exactly those edges $e$ in the link $\Lk_\Delta(v)$; the terminus of such an edge $e$ is the singleton equivalence class $[e]_C$, which is not incident at any other edges. In summary, $V_\Omega$ is a disjoint union of `star graphs', one for each vertex of $\Delta$. Non-singularity, together with global and local consistency, are now trivial, so $\Omega$ is indeed an origami. Even more, since the vertex and edge graphs are forests, $\Omega$ is essential.
\end{example}

The next lemma explains the link between origamis and (un)folding.

\begin{lemma}[Unfolding origamis]\label{lem: Unfolding origamis}
Let $f=f_a:\Delta\to\Delta'$ be an essential fold morphism, and $\Omega'$ be an essential origami on the graph $\Delta'$. There is an essential origami $\Omega$ on $\Delta$ such that $f_a$ descends to an isomorphism $\Delta/\Omega\cong\Delta'/\Omega'$.
\end{lemma}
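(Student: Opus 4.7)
My plan is to pull back the open equivalence relation of $\Omega'$ along $f=f_a$, and then split the pulled-back class of $\opp a$ in order to restore essentiality (a direct pullback would create double edges in $E_\Omega$ around $a$ and $\opp a$). Write $u=\iota(a_1)=\iota(a_2)$ and $v_i=\tau(a_i)$, so that $v_1\ne v_2$ in $\Delta$ while $f(v_1)=f(v_2)=v$ in $\Delta'$. Since $f$ is a bijection on $E_\Delta\smallsetminus\{a_1,a_2,\opp a_1,\opp a_2\}$, each element of the open class $A^*$ of $\opp a$ in $\Omega'$ other than $\opp a$ itself has a unique lift to $E_\Delta$.

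Choose a partition $\tilde R_1\sqcup\tilde R_2$ of the set of these lifts and define $\sim_O$ on $E_\Delta$ to have as its classes the preimages under $f$ of each open class of $\Omega'$ other than $A^*$, together with $\{\opp a_1\}\cup\tilde R_1$ and $\{\opp a_2\}\cup\tilde R_2$. The induced relation $\sim_C$ then pulls back the closed relation of $\Omega'$ except at the closed class of $a$, which correspondingly splits into $\{a_1\}\cup\opp{\tilde R_1}$ and $\{a_2\}\cup\opp{\tilde R_2}$. The classes $[a]_O$ and $[\opp a]_C$ of the would-be origami $\Omega$ remain unsplit, so $a_1\sim_O a_2$ and $\opp a_1\sim_C\opp a_2$.

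For the isomorphism $\Delta/\Omega\cong\Delta'/\Omega'$, the quotient maps $E_\Omega\to E_{\Omega'}$ and $V_\Omega\to V_{\Omega'}$ are surjections by construction and furnish the factorisation of $f$ once one verifies that they induce bijections on connected components. Non-singularity, global consistency and local consistency then reduce to the corresponding properties of $\Omega'$; the key consistency input is that $v_1$ and $v_2$ are joined in $V_\Omega$ by the length-$2$ path $v_1\to[\opp a]_C\to v_2$, available precisely because $[\opp a]_C$ is unsplit.

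The hardest step is essentiality. A direct count shows $\chi(V_\Omega)=\chi(V_{\Omega'})$ and $\chi(E_\Omega)=\chi(E_{\Omega'})$: in each graph, the two vertex-splits contribute the same number of new vertices as the duplications of $a$ and $\opp a$ contribute new edges. Essentiality therefore reduces to showing that each tree component of $V_{\Omega'}$ (respectively $E_{\Omega'}$) has a \emph{connected} preimage. For $V_\Omega$ this is automatic: after splitting $v$ and the closed class of $a$, connectedness is restored because $\opp a_1,\opp a_2$ reunite $v_1,v_2$ through the common vertex $[\opp a]_C$, while $a_1,a_2$ both emanate from $u$. A similar tree-surgery argument handles $E_\Omega$, but there the partition $(\tilde R_1,\tilde R_2)$ must be chosen compatibly with the tree structure of the $E_{\Omega'}$-component containing $A^*$, and I expect this compatibility condition to be the most delicate part of the bookkeeping.
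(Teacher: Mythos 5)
Your plan correctly identifies where the difficulty lies, but you stop short of resolving it, and the unresolved part is the heart of the proof. You acknowledge that "the partition $(\tilde R_1,\tilde R_2)$ must be chosen compatibly with the tree structure\ldots and I expect this compatibility condition to be the most delicate part of the bookkeeping" --- but you never say \emph{how} to choose it. Without a concrete recipe, the proof does not exist: a wrong choice of partition will break local consistency or essentiality. The paper's proof makes precisely this choice explicit. It uses the partition $\Lk_{\Delta'}(v)\smallsetminus\{\opp{a}\}=L_1\sqcup L_2$ that records the unfold (Remark~\ref{rem: Unfolding and partitions}), and for each $e$ with $f(e)\in[\opp a]_O\smallsetminus\{\opp a\}$ it looks at the \emph{unique} embedded path $\gamma$ in the forest $V_{\Omega'}$ from $[f(e)]_C$ to $v$, takes its last edge $b$, and declares $e\sim_O\opp a_i$ exactly when $b\in L_i$. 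Note in passing that the relevant tree structure is that of $V_{\Omega'}$, not of $E_{\Omega'}$ as your sketch suggests; the terminal edge of the $V_{\Omega'}$-path is what determines which side of the link partition the lift falls on.

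Two further gaps. First, your essentiality argument via Euler characteristics only goes through once you establish that the preimage of each tree component of $V_{\Omega'}$ (resp.\ $E_{\Omega'}$) remains connected, which you assert but do not prove; that connectedness again depends on the unspecified partition. The paper sidesteps the counting entirely by observing directly that $V_\Omega$ and $E_\Omega$ are obtained from $V_{\Omega'}$ and $E_{\Omega'}$ by \emph{essential} (graph-theoretic) unfolds, and an essential unfold of a forest is a forest --- a cleaner route. Second, the verification of local consistency for $\Omega$ is genuinely involved: the paper runs a three-case analysis according to whether the embedded path $\gamma'$ in $V_{\Omega'}$ crosses $v$, crosses $[a]_C$, or crosses both, and the case where $\gamma'$ crosses $v$ and $f(e_i)\sim_O\opp a$ is exactly where the specific choice of partition enters (one needs $k(1)=k(2)$, which is forced by item (c) of the paper's definition). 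Your proposal reduces all of this to the remark that $v_1$ and $v_2$ are rejoined via $[\opp a]_C$, which does not address local consistency at all. In short: the structural outline is sound, but the parts you flag as "delicate bookkeeping" are precisely the content of the lemma, and they remain missing.
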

\begin{proof}
First, we fix some notation: let $u=\iota(a_1)=\iota(a_2)$, let $v_i=\tau(a_i)$, and for their images in $\Delta'$ we write $u'=f(u)$ and $v=f(v_1)=f(v_2)$. Apart from $u$, $v_1$ and  $v_2$, together with $a_1$, $a_2$ and their opposites, $f$ maps the vertices and edges of $\Delta$ bijectively to $\Delta'$.

We can now define the origami $\Omega$. To avoid a proliferation of notation, we shall use the notation $\sim_O$ for the open equivalence relations on both $\Delta$ and $\Delta'$ -- since they are distinct graphs, this should not cause confusion. The definition of $\sim_O$ on $\Delta$ falls into three cases.

\begin{enumerate}[(a)]
\item Unless $f(e_1)$ and $f(e_2)$ are open-equivalent to one of $a$ or $\opp{a}$, then $e_1\sim_O e_2$ if and only if $f(e_1)\sim_O f(e_2)$.
\item If $f(e)\sim_O a$ then $e\sim_{O}a_1\sim_Oa_2$.  In particular, $a_1\sim_O a_2$.
\item Finally, suppose that $f(e)\sim_O \opp{a}$. If $f(e)=\opp{a}$ then either $e=\opp{a}_1$ or $e=\opp{a}_2$, and we insist that $\opp{a}_1\nsim_O\opp{a}_2$. To complete the definition of $\Omega$ it remains to describe whether $e\sim_O\opp{a}_1$ or $e\sim_O\opp{a}_2$ when $f(e)\in [\opp{a}]_O\smallsetminus\{\opp{a}\}$.

To this end, let $\gamma$ be the (unique, since $V_{\Omega'}$ is a forest) embedded path in $V_{\Omega'}$ from $[f(e)]_C$ to $v$, and let $b$ be the edge of $\gamma$ adjacent to $v$. The local consistency of $\Omega'$ implies that $f(e)$ is the only edge of $[\opp{a}]_O$ that $\gamma$ traverses; in particular, $b\neq\opp{a}$ since $f(e)\neq \opp{a}$. By Remark \ref{rem: Unfolding and partitions}, the links of $v_1$ and $v_2$ induce a partition 
\[
\Lk_{\Delta'}(v)\smallsetminus\{\opp{a}\}=L_1\sqcup L_2
\]
where $L_i=f(\Lk_\Delta(v_i)\smallsetminus\{\opp{a}_i\})$.  Note that $\Lk_{V_{\Omega'}}(v')=\Lk_{\Delta'}(v')$ by construction, so $b\in \Lk_{\Delta'}(v)\smallsetminus\{\opp{a}\}$. We now set $e\sim_O \opp{a}_i$ for the unique $i=1,2$ such that $b\in L_i$.
\end{enumerate}

To check that $\Omega$ really is an origami with $\Delta/\Omega\cong \Delta'/\Omega'$, we first explain how the vertex and edge graphs of $\Omega$ relate to those for $\Omega'$.   The edge graph $E_\Omega$ is obtained from $E_{\Omega'}$ by dividing $a$ into $a_1$ and $a_2$, and likewise dividing $\opp{a}$ into $\opp{a}_1$ and $\opp{a}_2$. Since $a_1\sim_O a_2$ and $\opp{a}_1\sim_C \opp{a}_2$, this has the effect of unfolding $a$ and $\opp{a}$. In summary, $E_\Omega$ is obtained by unfolding $E_{\Omega'}$ twice. Furthermore, since $\opp{a}_1\nsim_O\opp{a}_2$ and $a_1\nsim_C a_2$, these unfolds are essential. This discussion is illustrated in Figure \ref{fig: Edge graph fold}. 

\begin{figure}[htp]
\begin{center}
\begin{tikzpicture}
\coordinate (aiO) at (-4,2);
\coordinate (a1c) at (-2,3);
\coordinate (a2c) at (-2,1);
\coordinate (a1) at (-3,2.5);
\coordinate (a2) at (-3,1.5);

\coordinate (aO) at (2,2);
\coordinate (ac) at (4,2);
\coordinate (a) at (3,2);

\coordinate (aiOlabel) at (-5.25,2);
\coordinate (a1clabel) at (-1.5,3);
\coordinate (a2clabel) at (-1.5,1);
\coordinate (a1label) at (-3,2.75);
\coordinate (a2label) at (-3,1.25);

\coordinate (aOlabel) at (1.5,2);
\coordinate (aclabel) at (4.5,2);
\coordinate (alabel) at (3,2.25);

\coordinate (EOmegalabel) at (-3,4);
\coordinate (EOmega'label) at (3,4);

\draw[thick,->] (aiO)-- (a1);\draw[thick,-] (a1) -- (a1c);
\draw[thick,->] (aiO) -- (a2);\draw[thick,-] (a2) -- (a2c);

\draw[thick,->] (aO) -- (a);\draw[thick,-] (a) -- (ac);

\coordinate (aibarc) at (-2,-2);
\coordinate (a1O) at (-4,-1);
\coordinate (a2O) at (-4,-3);
\coordinate (a1bar) at (-3,-1.5);
\coordinate (a2bar) at (-3,-2.5);

\coordinate (abarO) at (2,-2);
\coordinate (abarc) at (4,-2);
\coordinate (abar) at (3,-2);

\coordinate (aibarclabel) at (-1,-1.5);
\coordinate (a1Olabel) at (-4.5,-1);
\coordinate (a2Olabel) at (-4.5,-3);
\coordinate (a1barlabel) at (-3,-1.2);
\coordinate (a2barlabel) at (-3,-2.8);

\coordinate (abarOlabel) at (1.5,-2);
\coordinate (abarclabel) at (4.5,-2);
\coordinate (abarlabel) at (3,-1.7);

\draw[thick,->] (a1O)-- (a1bar);\draw[thick,-] (a1bar) -- (aibarc);
\draw[thick,->] (a2O) -- (a2bar);\draw[thick,-] (a2bar) -- (aibarc);

\draw[thick,->] (abarO) -- (abar);\draw[thick,-] (abar) -- (abarc);

\draw[->] (-1,2) -- (1,2);
\draw[->] (-1,-2) -- (1,-2);

\node at (EOmegalabel) {$E_\Omega$};
\node at (EOmega'label) {$E_{\Omega'}$};

\node at (aiOlabel) {$[a_1]_O=[a_2]_O$};
\node at (a1clabel) {$[a_1]_C$};
\node at (a2clabel) {$[a_2]_C$};
\node at (a1label) {$a_1$};
\node at (a2label) {$a_2$};
\node at (aOlabel) {$[a]_O$};
\node at (aclabel) {$[a]_C$};
\node at (alabel) {$a$};

\node at (aibarclabel) {$[\opp{a}_1]_C=[\opp{a}_2]_C$};
\node at (a1Olabel) {$[\opp{a}_1]_O$} ;
\node at (a2Olabel) {$[\opp{a}_2]_O$};
\node at (a1barlabel) {$\opp{a}_1$};
\node at (a2barlabel) {$\opp{a}_2$};

\node at (abarOlabel) {$[\opp{a}]_O$};
\node at (abarclabel) {$[\opp{a}]_C$};
\node at (abarlabel) {$\opp{a}$};
\end{tikzpicture}
\end{center}
\caption{Given an essential origami $\Omega'$ on $\Delta'$, an essential fold $\Delta\to\Delta'$ as in Figure \ref{fig: Fold} induces an origami $\Omega$ on $\Delta$ with an edge graph $E_\Omega$ obtained by unfolding $E_{\Omega'}$, as illustrated. These unfolds are always essential.}
\label{fig: Edge graph fold}
\end{figure}
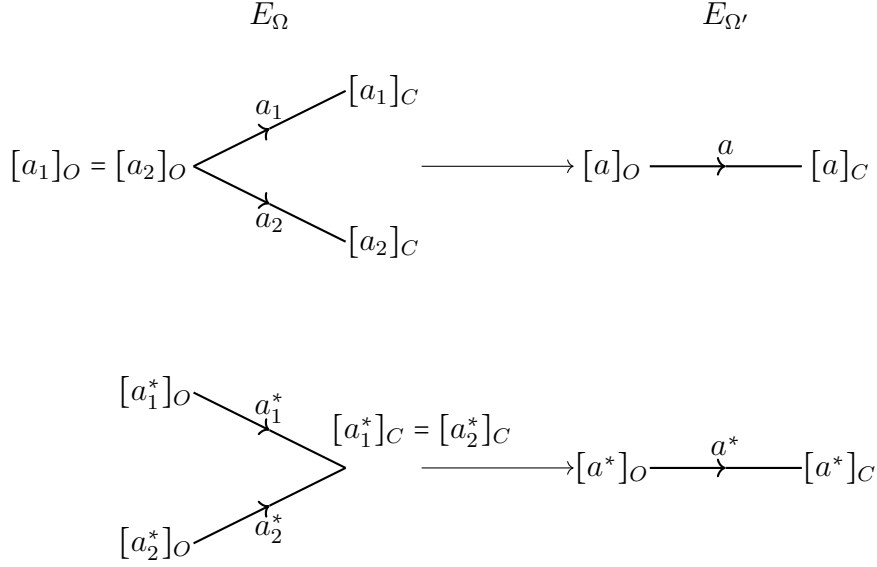

The vertex graph $V_\Omega$ is also obtained by unfolding $V_{\Omega'}$. The edge $a$ unfolds to the pair of edges $a_1$ and $a_2$, which meet at the vertex $u=\iota(a_1)=\iota(a_2)$. The terminal vertices $[a_i]_C$ are now distinct since $\opp{a}_1\nsim_O \opp{a}_2$, so this unfold is essential. Likewise, the edge $\opp{a}$ unfolds to the pair of edges $\opp{a}_1$ and $\opp{a}_2$, which meet at the vertex $[\opp{a}_1]_C=[\opp{a}_2]_C$, and their initial vertices $v_1$ and $v_2$ are distinct since $f$ is essential. See Figure \ref{fig: Vertex graph fold}.

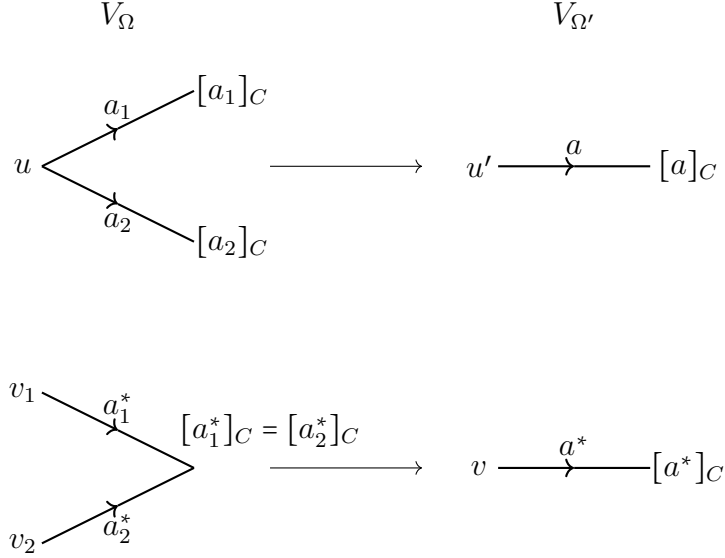
\begin{figure}[ht]
\begin{center}
\begin{tikzpicture}
\coordinate (u) at (-4,2);
\coordinate (a1c) at (-2,3);
\coordinate (a2c) at (-2,1);
\coordinate (a1) at (-3,2.5);
\coordinate (a2) at (-3,1.5);

\coordinate (u') at (2,2);
\coordinate (ac) at (4,2);
\coordinate (a) at (3,2);

\coordinate (ulabel) at (-4.25,2);
\coordinate (a1clabel) at (-1.5,3);
\coordinate (a2clabel) at (-1.5,1);
\coordinate (a1label) at (-3,2.75);
\coordinate (a2label) at (-3,1.25);

\coordinate (u'label) at (1.75,2);
\coordinate (aclabel) at (4.5,2);
\coordinate (alabel) at (3,2.25);

\coordinate (VOmegalabel) at (-3,4);
\coordinate (VOmega'label) at (3,4);

\draw[thick,->] (u)-- (a1);\draw[thick,-] (a1) -- (a1c);
\draw[thick,->] (u) -- (a2);\draw[thick,-] (a2) -- (a2c);

\draw[thick,->] (u') -- (a);\draw[thick,-] (a) -- (ac);

\coordinate (aibarc) at (-2,-2);
\coordinate (v1) at (-4,-1);
\coordinate (v2) at (-4,-3);
\coordinate (a1bar) at (-3,-1.5);
\coordinate (a2bar) at (-3,-2.5);

\coordinate (v) at (2,-2);
\coordinate (abarc) at (4,-2);
\coordinate (abar) at (3,-2);

\coordinate (aibarclabel) at (-1,-1.5);
\coordinate (v1label) at (-4.25,-1);
\coordinate (v2label) at (-4.25,-3);
\coordinate (a1barlabel) at (-3,-1.2);
\coordinate (a2barlabel) at (-3,-2.8);

\coordinate (vlabel) at (1.75,-2);
\coordinate (abarclabel) at (4.5,-2);
\coordinate (abarlabel) at (3,-1.7);

\draw[thick,->] (v1)-- (a1bar);\draw[thick,-] (a1bar) -- (aibarc);
\draw[thick,->] (v2) -- (a2bar);\draw[thick,-] (a2bar) -- (aibarc);

\draw[thick,->] (v) -- (abar);\draw[thick,-] (abar) -- (abarc);

\draw[->] (-1,2) -- (1,2);
\draw[->] (-1,-2) -- (1,-2);

\node at (VOmegalabel) {$V_\Omega$};
\node at (VOmega'label) {$V_{\Omega'}$};

\node at (ulabel) {$u$};
\node at (a1clabel) {$[a_1]_C$};
\node at (a2clabel) {$[a_2]_C$};
\node at (a1label) {$a_1$};
\node at (a2label) {$a_2$};
\node at (u'label) {$u'$};
\node at (aclabel) {$[a]_C$};
\node at (alabel) {$a$};

\node at (aibarclabel) {$[\opp{a}_1]_C=[\opp{a}_2]_C$};
\node at (v1label) {$v_1$} ;
\node at (v2label) {$v_2$};
\node at (a1barlabel) {$\opp{a}_1$};
\node at (a2barlabel) {$\opp{a}_2$};

\node at (vlabel) {$v$};
\node at (abarclabel) {$[\opp{a}]_C$};
\node at (abarlabel) {$\opp{a}$};

\end{tikzpicture}
\end{center}
\caption{Under an essential fold as in Figure \ref{fig: Fold}, the vertex graph $V_\Omega$ is obtained by unfolding $V_{\Omega'}$, as illustrated. These unfolds are always essential.}
\label{fig: Vertex graph fold}
\end{figure}

From these descriptions, it follows that $f$ preserves connectivity in the vertex and edge graphs: edges $e_1,e_2$ of $\Delta$ are in the same component of $E_\Omega$ if and only if $f(e_1)$ and $f(e_2)$ are in the same component of $E_{\Omega'}$, and the same is true for $V_\Omega$ and $V_{\Omega'}$. The same holds for vertices $v_1,v_2$: except in the trivial case where one is isolated, there are edges $e_i$ such that $v_i=\iota(e_i)$, and it follows from the case of edges that $v_1$ and $v_2$ are in the same component of $V_\Omega$ if and only if $f(v_1)$ and $f(v_2)$ are in the same component of $V_{\Omega'}$.

This connectivity information makes it possible to check that $\Omega$ is an origami. If $e$ and $\opp{e}$ were in the same component of $E_\Omega$ then $f(e)$ and $f(\opp{e})$ would be in the same component of $E_{\Omega'}$, a contradiction, which proves non-singularity.  Global consistency also follows quickly: by the definition of $\Omega$, if $e_1\sim e_2$ then $f(e_1)\sim_O f(e_2)$, so $\iota(f(e_1))$ is in the same component of $V_{\Omega'}$ as $\iota(f(e_2))$, by the global consistency of $\Omega'$. Since $f$ preserves connectivity, $\iota(e_1)$ and $\iota(e_2)$ are in the same component of $V_\Omega$, as required.

We next check local consistency. To this end, given edges $e_1, e_2\in [e]_O$, our goal is to prove that $e$ does not separate $\iota(e_1)$ from $\iota(e_2)$ in $V_
\Omega$. By the definition of $\Omega$, $f(e_1),f(e_2)\in [f(e)]_O$ so, by the local consistency of  $\Omega'$, the embedded path $\gamma'$ in $V_{\Delta'}$ from $\iota(f(e_1))$ to $\iota(f(e_2))$ doesn't cross $f(e)$. Since $V_\Delta$ is obtained from $V_{\Delta'}$ by unfolding the edges $a$ and $\opp{a}$, the path $\gamma'$ lifts to a path $\gamma$ in $V_\Omega$ unless $\gamma'$ crosses either of the vertices $v$ or $[a]_C$ of $V_{\Delta'}$. If it does lift, then $\gamma$ also doesn't cross $e$, as required.

Therefore, it remains to consider the cases in which $\gamma'$ fails to lift at either of the vertices $v$ or $[a]_C$ of $V_{\Omega'}$. Note that, since $\gamma'$ is an embedded path, it crosses each of these vertices at most once.

Suppose first that $\gamma'$ crosses $[a]_C$ but not $v$.  For $i=1,2$, let $\gamma'_i$ be the embedded path from $\iota(f(e_i))$ to $[a]_C$, so $\gamma'$ is the concatenation $\gamma'_1\cdot\gamma'_2$. Each $\gamma'_i$ lifts to a path $\gamma_i$ in $V_\Omega$ from $\iota(e_i)$ to $[a_{j(i)}]_C$ for some $j(i)=1,2$. Therefore, the concatenation
\[
\gamma=\gamma_1\cdot a_{j(1)}\cdot a_{j(2)}\cdot\gamma_2
\]
is a path in $V_\Omega$ from $\iota(e_1)$ to $\iota(e_2)$. Neither $\gamma_1$ nor $\gamma_2$ cross $e$, since $\gamma'$ doesn't cross $f(e)$, so it remains to show that $e$ is not equal to $a_1$ or $a_2$. But $a$ separates $u=\iota(a)$ from at least one of $\iota(f(e_1))$ and $\iota(f(e_2))$, so $a\nsim_O f(e_i)$ and therefore $a_j\nsim_O e_i$, whence $e$ is neither $a_1$ nor $a_2$, as required.

Next, suppose that $\gamma'$ crosses $v$ but not $[a]_C$, and let $\gamma'_i$ be the embedded path from $\iota(f(e_i))$ to $v$. Again, each $\gamma'_i$ lifts to a path $\gamma_i$ in $V_\Omega$ from $\iota(e_i)$ to some $v_{k(i)}$. If $f(e_i)\nsim_O\opp{a}$ then the concatenation
\[
\gamma=\gamma_1\cdot \opp{a}_{k(1)}\cdot \opp{a}_{k(2)}\cdot\gamma_2
\]
does not cross $e$, as in the previous case. Hence, we may assume that $f(e_i)\sim_O\opp{a}$. Let $b'_i$ be the edge of $\gamma'_i$ incident at $v$ (or take $b'_i=f(e_i)$ if $\gamma'_i$ is just a point). Local consistency of $\Omega'$ implies that $\gamma'$ does not cross $\opp{a}$. Therefore, if $b'_i=\opp{a}$, it follows that $\gamma'_i$ is a point and $f(e_i)=\opp{a}$, so $e_i=\opp{a}_{k(i)}$. On the other hand, if $b'_i\neq\opp{a}$, then $b'_i\in L_{k(i)}$, as in item (c) of the definition of $\Omega$.  If $k(1)\neq k(2)$ then item (c) of the definition of $\Omega$ implies that $e_1\nsim_O e_2$, which contradicts our hypothesis that $e_1,e_2\in[e]_O$. Therefore, $k(1)=k(2)$, so the lifts $\gamma_1$ and $\gamma_2$ both end at the same vertex $v_k$, and the concatenation $\gamma=\gamma_1\cdot\gamma_2$ is a path from $\iota(e_1)$ to $\iota(e_2)$ that does not traverse $e$, as required.

It remains to deal with the case when $\gamma'$ crosses both $v$ and $[a]_C$. In this case, swapping $e_1$ and $e_2$ if necessary, we may write
\[
\gamma'=\gamma'_1\cdot\delta'\cdot\gamma'_2
\]
where $\gamma'_1$ is the shortest path from $\iota(f(e_1))$ to $[a]_C$, $\delta'$ is the shortest path from $[a]_C$ to $v$ and $\gamma'_2$ is the shortest path from $v$ to $\iota(f(e_2))$. The arguments of the previous two cases now apply verbatim to prove that $e$ does not separate $\iota(e_1)$ from $\iota(e_2)$. In more detail, suppose that $\gamma_i$ is the lift of $\gamma'_i$ to $V_\Omega$ and $\delta$ is the lift of $\delta'$. As above, $e$ is not equal to $a_1$ or $a_2$ so a concatenation
\[
\gamma=\gamma_1\cdot a_{j(1)}\cdot a_{j(2)}\cdot\delta\cdot \opp{a}_{k(1)}\cdot\opp{a}_{k(2)}\cdot\gamma_2
\]
is the required path that does not cross $e$ unless $e=\opp{a}_1$ or $\opp{a}_2$.  If $e=\opp{a}_k$ for some $k$ then, as above, the hypothesis that $e_1\sim_O e_2$ and item (c) of the definition of $\Omega$ imply that $\delta$ and $\gamma_2$ end at the same vertex $v_k$, so
\[
\gamma=\gamma_1\cdot a_{j(1)}\cdot a_{j(2)}\cdot\delta\cdot\gamma_2
\]
is the required path.

Finally, that $\Delta/\Omega\cong\Delta/\Omega'$ is also an immediate consequence of the fact that $f$ preserves connectivity in the vertex and edge graphs.
\end{proof}

Since homotopy equivalences factor as compositions of essential folds, it follows that they can be realised by origamis. 

\begin{proposition}\label{prop: Essential origamis realise homotopy equivalences}
Let $f:\Delta\to\Gamma$ be a morphism of finite, connected, core graphs.  If $f$ is a homotopy equivalence, then there is an essential origami $\Omega$ on $\Delta$ and an isomorphism $\Delta/\Omega\cong\Gamma$ that identifies $f$ with the quotient map.
\end{proposition}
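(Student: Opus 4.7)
The plan is to combine Stallings' folding lemma with the Unfolding Lemma (Lemma \ref{lem: Unfolding origamis}) in a backwards induction, starting from the trivial origami on the target side. First, I apply the folding lemma to factor $f=\bar{f}\circ f_0$, where $f_0:\Delta=\Delta_0\to\Delta_1\to\cdots\to\Delta_n=\overline{\Delta}$ is a composition of folds and $\bar{f}:\overline{\Delta}\to\Gamma$ is an immersion. Before running the induction I need two preliminary claims: (a) every fold appearing in $f_0$ is essential, and (b) $\bar{f}$ is an isomorphism of graphs. Both are forced by the hypothesis that $f$ is a homotopy equivalence of finite connected core graphs.

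For (a), every fold is surjective on $\pi_1$ (a fold is a quotient map that at worst pinches a single loop), whereas immersions are injective on $\pi_1$. The $\pi_1$-injectivity of $f=\bar{f}\circ f_0$ therefore forces each intermediate fold to be $\pi_1$-injective, i.e.\ essential. For (b), since $(f_0)_*$ is surjective and $f_*$ is an isomorphism, $\bar{f}_*$ is an isomorphism; as $\Gamma$ is a core graph with no proper subgraph carrying all of $\pi_1(\Gamma)$ (any such complement would consist of tree components attached at valence-$1$ vertices of $\Gamma$), the map $\bar{f}$ must be surjective on underlying graphs. Lifting to universal covers, $\widetilde{\bar{f}}:\widetilde{\overline{\Delta}}\to\widetilde{\Gamma}$ is an immersion of trees, hence injective (reduced paths map to reduced paths under an immersion, so two distinct points cannot collapse), and its image is a $\pi_1(\Gamma)$-invariant subtree of $\widetilde{\Gamma}$ projecting surjectively to $\Gamma$, so it must equal $\widetilde{\Gamma}$. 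Descending, $\bar{f}$ is an isomorphism.

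With these preliminary claims, the proof is a clean backwards induction along the fold sequence. The base case is the trivial origami $\Omega_n$ on $\overline{\Delta}=\Delta_n$, which is essential and whose quotient map is the identity $\overline{\Delta}\to\overline{\Delta}$. At each inductive step, given an essential origami $\Omega_i$ on $\Delta_i$ together with a canonical isomorphism of $\Delta_i/\Omega_i$ with $\overline{\Delta}$, I apply the Unfolding Lemma to the essential fold $\Delta_{i-1}\to\Delta_i$ to produce an essential origami $\Omega_{i-1}$ on $\Delta_{i-1}$ and an isomorphism $\Delta_{i-1}/\Omega_{i-1}\cong\Delta_i/\Omega_i$ under which the two quotient maps intertwine with the fold. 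After $n$ steps, the resulting essential origami $\Omega=\Omega_0$ on $\Delta$ satisfies $\Delta/\Omega\cong\overline{\Delta}$ with the isomorphism realising $f_0$; composing with the isomorphism $\bar{f}$ from (b) gives the required isomorphism $\Delta/\Omega\cong\Gamma$ identifying the quotient map with $f=\bar{f}\circ f_0$.

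I expect the crux of the argument to be claim (b), the fact that an immersion of finite connected graphs which induces an isomorphism on $\pi_1$ and whose target is a core graph must itself be an isomorphism; the remaining steps are either standard observations about folds and $\pi_1$ or straightforward bookkeeping on top of the Unfolding Lemma.
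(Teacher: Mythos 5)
Your proposal is correct and takes essentially the same route as the paper: Stallings folding to get $f=\bar f\circ f_0$, the observation that $\pi_1$-injectivity of $f$ forces every fold in $f_0$ to be essential and forces $\bar f$ to be an isomorphism of core graphs, and then induction along the fold sequence starting from the trivial origami on $\overline\Delta$ and applying Lemma \ref{lem: Unfolding origamis} at each step. The only cosmetic difference is in showing $\bar f$ is an isomorphism: the paper establishes injectivity first via the standard shortest-path argument (a $\pi_1$-surjective immersion of graphs is an embedding) and then invokes the core hypothesis, whereas you argue surjectivity first from the core hypothesis and then injectivity by passing to universal covers; both are fine.
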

\begin{proof}
By Lemma \ref{lem: Stallings folding lemma},  $f$ factors as
\[
\Delta\stackrel{f_0}{\to}\overline{\Delta}\stackrel{\bar{f}}{\to}\Gamma\,,
\]
where $f_0$ is a composition of folds and $\bar{f}$ is an immersion. Since $f$ is itself $\pi_1$-surjective, if follows that $\bar{f}$ is also $\pi_1$-surjective and hence injective by a standard argument\footnote{If $\bar{f}(u)=\bar{f}(v)$ with $u$ and $v$ distinct vertices then a shortest path $\gamma$ from $u$ to $v$ maps to a loop in $\Gamma$ representing an element of $\pi_1(\Gamma)$ not in $f_*\pi_1(\Delta)$.}. Since $\Gamma$ is core and $\bar{f}$ is a homotopy equivalence, it follows that $\bar{f}$ is an isomorphism.

Thus, it suffices to identify $\overline{\Delta}$ with a quotient $\Delta/\Omega$, in such a way that $f_0$ becomes the quotient map. This is done by induction on $n$, where  $f_0$ factors as a composition of $n$ folds. In the base case $n=0$, the trivial origami of Example \ref{ex: Trivial origami} is as required. The inductive step is provided by Lemma \ref{lem: Unfolding origamis}, noting that every fold is essential since $f_0$ is injective on fundamental groups. This completes the proof that $f$ is realised by an essential origami.
\end{proof}

The most important theorem about origamis is a kind of converse to Proposition \ref{prop: Essential origamis realise homotopy equivalences}: if an origami is essential then the quotient map is a $\pi_1$-injection. This is how origamis make it possible to certify $\pi_1$-injectivity, even for morphisms of graphs that are not immersions. The next lemma provides the key step in the proof.

\begin{lemma}\label{lem: Producing a foldable pair}
Let $\Omega$ be an essential origami on a finite graph $\Delta$. If the quotient map $q:\Delta\to\Delta/\Omega$ is not an immersion then there are distinct edges $a_1$ and $a_2$ such that $\iota(a_1)=\iota(a_2)$ and $a_1\sim_O a_2$.
\end{lemma}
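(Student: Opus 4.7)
The plan is to pick distinct edges $e_1 \neq e_2 \in \Lk_\Delta(v)$ with $q(e_1) = q(e_2)$ (which exist since $q$ is not an immersion) and analyse the unique reduced edge path $e_1 = f_0, f_1, \ldots, f_d = e_2$ in the tree component of $E_\Omega$ that contains them. Since vertices of $E_\Omega$ alternate between open and closed classes, consecutive edges $f_{i-1}, f_i$ share either an open vertex ($\sigma_i := O$) or a closed vertex ($\sigma_i := C$), and the types $\sigma_i$ alternate. Write $v_i := \iota(f_i)$, so $v_0 = v_d = v$. I aim to show $d = 1$: this forces $\sigma_1 = O$ because $\sigma_1 = C$ would give two distinct parallel edges $e_1, e_2$ from $v$ to $[e_1]_C$ in $V_\Omega$, contradicting the forest assumption on $V_\Omega$; so $e_1 \sim_O e_2$, and we take $a_1 = e_1, a_2 = e_2$.

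Suppose for contradiction that $d \geq 2$. The plan is to construct a non-trivial closed walk $W$ at $v$ in the tree component of $V_\Omega$ containing $v$, and to check that $W$ has no immediate backtrack, contradicting the fact that every non-trivial closed walk in a tree admits an immediate backtrack. Define $W$ by concatenating sub-walks for $i = 1, \ldots, d$: for $\sigma_i = C$, traverse the length-2 sub-walk $v_{i-1} \to [f_{i-1}]_C \to v_i$ along the edges $f_{i-1}$ and $f_i$ of $V_\Omega$; for $\sigma_i = O$, traverse the unique $V_\Omega$-path $P_i$ from $v_{i-1}$ to $v_i$, which exists by global consistency and avoids every edge in $[f_{i-1}]_O$ by local consistency applied to $f_{i-1}, f_i \in [f_{i-1}]_O$. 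If $v_{i-1} = v_i$ for some $\sigma_i = O$, then $f_{i-1} \sim_O f_i$ are distinct edges at the common initial vertex $v_{i-1}$ and we are done; otherwise each $P_i$ has positive length, and since $d \geq 2$ forces some $\sigma_i = C$ (whose sub-walk contributes length $2$), the walk $W$ is non-trivial.

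The technical crux is verifying that $W$ has no immediate backtrack. Within a $C$-sub-walk this holds because $f_{i-1} \neq f_i$ (the $E_\Omega$-path is reduced), which together with the forest property of $V_\Omega$ forces them to be distinct undirected $V_\Omega$-edges; within an $O$-sub-walk it holds because $P_i$ is a simple path. At a boundary between a $C$-sub-walk and an adjacent $O$-sub-walk $P_j$, an immediate backtrack would require $P_j$ to traverse the $V_\Omega$-edge $f_i$ used by the adjacent $C$-step (for the shared index $i$), but $f_i$ lies in $[f_{j-1}]_O$ and $P_j$ avoids this class by local consistency. This exhausts all possible backtrack positions and delivers the contradiction. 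The main obstacle is precisely this boundary analysis, where the local consistency condition built into the definition of an origami is exactly what is needed.
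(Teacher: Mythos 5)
Your proof is correct and follows essentially the same strategy as the paper's: take the geodesic in the tree $E_\Omega$ joining the two offending link edges, and use it as a template to build a closed walk in $V_\Omega$ based at $v$, with the length-2 $C$-steps supplying the edges and local consistency guaranteeing that the $O$-type connecting paths avoid the relevant open classes, so that the walk is reduced and hence trivial in the forest $V_\Omega$. The only cosmetic differences are that you parametrise the $E_\Omega$-geodesic as an edge-path from $e_1$ to $e_2$ (forcing a short case split on $\sigma_1$) rather than as a vertex-path from $[e_1]_O$ to $[e_2]_O$, and you peel off the degenerate case $v_{i-1}=v_i$ separately rather than folding it into the general argument.
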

\begin{proof}
Since the quotient map $q$ is not an immersion, it folds some pair of edges by Stallings' folding lemma. That is, there are distinct edges $a_1,a_2$ of $\Delta$ with $u=\iota(a_1)=\iota(a_2)$ and with $q(a_1)=q(a_2)$, which means that $a_1$ and $a_2$ are in the same component of $E_\Omega$. We will prove that $a_1\sim_O a_2$.

Because $E_\Omega$ is a forest, there is a unique shortest edge-path
\[
e_1,e_2,\ldots,e_{2n-1},e_{2n}
\]
in $E_\Omega$ from $[a_1]_O$ to $[a_2]_O$. That is, the edges $e_i$ satisfy
\[
a_1\sim_O e_1\sim_C e_2\sim_O \ldots \sim_O e_{2n-1}\sim_C e_{2n}\sim_O a_2
\]
and $n$ is minimal.

We now use this path in $E_\Omega$ to construct a path $\beta$ in $V_\Omega$ that starts and ends at $u$. Let $\alpha_0$ be the shortest path in $V_\Omega$ from $u=\iota(a_1)$ to $\iota(e_1)$; for each $1\leq i< n$, let $\alpha_i$ be the shortest path from $\iota(e_{2i})$ to $\iota(e_{2i+1})$; and finally, let $\alpha_n$ be the shortest path from $\iota(e_{2n})$ to $\iota(a_2)=u$. Note that any of the paths $\alpha_i$ might be points, and indeed this will certainly occur if $a_1=e_1$ or $a_2=e_{2n}$. The required path is the concatenation
\[
\beta= \alpha_0\cdot e_1\cdot e_2 \cdot \alpha_1 \cdot \cdots \cdot \alpha_{n-1}\cdot e_{2n-1} \cdot e_{2n}\cdot \alpha_n\,.
\]
Note that $\beta$ is an immersed path in $V_\Omega$.  Indeed, $e_{2i-1}$ and $e_{2i}$ are distinct by the minimality of $n$, while $\alpha_i$ does not traverse $e_{2i}$ or $e_{2i+1}$, by the local consistency of $\Omega$.

In summary, $\beta$ is an immersed path in the forest $V_\Omega$, starting and ending at $u$. Since any immersed path in a tree must be embedded, $\beta$ has length zero, which in turn implies that $n=0$ and $a_1\sim_O a_2$, as required. 
\end{proof}

Lemma \ref{lem: Producing a foldable pair} is useful because it enables us to fold $\Delta$. The next lemma shows how to push the essential origami $\Omega$ down to an essential origami on the folded graph.

\begin{lemma}[Folding origamis]\label{lem: Folding origamis}
Let $\Omega$ be an essential origami on a finite graph $\Delta$. Consider two edges $a_1,a_2$ of $\Delta$ with $\iota(a_1)=\iota(a_2)$, and let $f=f_a:\Delta\to\Delta'$ be the associated fold. If $a_1\sim_O a_2$ then there is an essential origami $\Omega'$ on $\Delta'$ such that $f$ descends to an isomorphism $\Delta/\Omega\cong\Delta'/\Omega'$. Furthermore, the fold $f$ is also essential.
\end{lemma}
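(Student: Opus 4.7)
I plan to define $\Omega'$ as the pushforward of $\sim_O$ along $f$ and then verify everything using the fact that $V_\Omega\to V_{\Omega'}$ and $E_\Omega\to E_{\Omega'}$ both factor as pairs of essential folds. Concretely, since $f$ restricts to a bijection $E_\Delta\smallsetminus\{a_1,a_2,\opp{a}_1,\opp{a}_2\}\to E_{\Delta'}\smallsetminus\{a,\opp{a}\}$ and sends $\{a_1,a_2\}$ to $a$, $\{\opp{a}_1,\opp{a}_2\}$ to $\opp{a}$, I take $\sim'_O$ to be the equivalence relation on $E_{\Delta'}$ generated by setting $f(e_1)\sim'_O f(e_2)$ whenever $e_1\sim_O e_2$. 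Since $a_1\sim_O a_2$ by hypothesis, the only potentially new identification among $\sim_O$-classes is the merger of $[\opp{a}_1]_O$ and $[\opp{a}_2]_O$ into the new class $[\opp{a}]_{O'}$.

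The first key step is to extract two inequalities from the forest hypotheses on $\Omega$: the parallel edges $a_1,a_2$ in $E_\Omega$ rule out $[a_1]_C=[a_2]_C$, that is, $\opp{a}_1\nsim_O\opp{a}_2$; and the edges $\opp{a}_1,\opp{a}_2$ in $V_\Omega$, which share the common terminal vertex $[\opp{a}_1]_C=[\opp{a}_2]_C$, rule out $v_1=v_2$. The second of these is precisely the ``furthermore'' statement that $f$ is essential. With both in hand, I describe $E_{\Omega'}$ (resp.\ $V_{\Omega'}$) as obtained from $E_\Omega$ (resp.\ $V_\Omega$) by the pair of essential folds identifying $a_1$ with $a_2$ and $\opp{a}_1$ with $\opp{a}_2$. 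Essential folds preserve forest-ness and connected components, so $V_{\Omega'}$ and $E_{\Omega'}$ are both forests and $f$ induces a bijection on their sets of path components.

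Given this structural picture, most of the origami axioms and the identification $\Delta/\Omega\cong\Delta'/\Omega'$ are direct consequences of component-preservation, paralleling the connectivity argument at the end of the proof of Lemma \ref{lem: Unfolding origamis}: distinct components of $E_\Omega$ map to distinct components of $E_{\Omega'}$, yielding non-singularity; global consistency is a pushforward of its counterpart in $\Omega$; and the vertices and edges of $\Delta/\Omega$, $\Delta'/\Omega'$ are the path components of the vertex and edge graphs, which are in bijection under $f$. The main obstacle is local consistency: given $e'_1,e'_2\in[e']_{O'}$, I must produce a path in $V_{\Omega'}$ from $\iota(e'_1)$ to $\iota(e'_2)$ avoiding $e'$. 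The plan is to lift $e'_1,e'_2$ to preimages $\tilde e_1,\tilde e_2\in E_\Delta$, apply local consistency of $\Omega$ in $V_\Omega$, and push forward. Since each component of $V_{\Omega'}$ is a tree the embedded path is unique, so when $e'\in\{a,\opp{a}\}$ it suffices to avoid the two preimages one at a time by two applications of local consistency in $\Omega$. The genuinely delicate subcase is $e'=\opp{a}$ with $\tilde e_1\in[\opp{a}_1]_O$ and $\tilde e_2\in[\opp{a}_2]_O$ lying in distinct $\sim_O$-classes of $\Omega$: local consistency in each class yields path-segments in $V_\Omega$ ending at $v_1$ and $v_2$ respectively, and the required path in $V_{\Omega'}$ is obtained by concatenating these segments across the identification $v_1=v_2=v$ created by the second fold.
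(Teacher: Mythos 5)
Your overall approach matches the paper's: the same pushforward definition of $\sim'_O$, the same description of $E_{\Omega'}$ and $V_{\Omega'}$ as obtained from $E_\Omega$ and $V_\Omega$ by (essential) folds, the same extraction of $\opp{a}_1\nsim_O\opp{a}_2$ and $v_1\neq v_2$ from the forest hypotheses, and the same reduction of the remaining axioms to component-preservation, with local consistency isolated as the real work. So this is not a different route.

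However, there is a gap in the local consistency argument. You restrict the ``delicate subcase'' to $e'=\opp{a}$ (your case split is $e'\in\{a,\opp{a}\}$ versus $e'\notin\{a,\opp{a}\}$, and you propose lift--apply-local-consistency--push-forward for the latter). But the delicate case is actually $e'\sim_O\opp{a}$ in $\Omega'$, which includes edges $e'\neq\opp{a}$. For such an $e'$, the lifts $\tilde e_1,\tilde e_2$ can still fall into the two distinct classes $[\opp{a}_1]_O$ and $[\opp{a}_2]_O$, so local consistency of $\Omega$ does not apply directly to the pair $\tilde e_1,\tilde e_2$, and you must again use the concatenation $\gamma_1\cdot\gamma_2$ through $v_1=v_2=v$. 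But now the edge to avoid is the \emph{unique} preimage $\tilde e$ of $e'$, which lies in $[\opp{a}_1]_O$ or $[\opp{a}_2]_O$ but is neither $\opp{a}_1$ nor $\opp{a}_2$. ``Two applications of local consistency'' (to avoid $\opp{a}_1$ and $\opp{a}_2$) do not show that $\gamma_1$ and $\gamma_2$ avoid $\tilde e$. The paper closes this gap by proving the stronger statement that \emph{every} edge $\epsilon$ crossed by $\gamma_i$ satisfies $\epsilon\nsim_O\opp{a}_1$ and $\epsilon\nsim_O\opp{a}_2$: the first follows from local consistency with the class containing $\tilde e_i$, and the second from the observation that the unique embedded path from $\iota(\epsilon)$ to $v_{3-i}$ in the tree must pass through $\opp{a}_{3-i}$. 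Since any preimage of $e'$ is open-equivalent to one of the $\opp{a}_k$, this shows $f(\epsilon)\neq e'$ uniformly. Your sketch needs this strengthening (or an equivalent disjoint-subtree argument) for the case $e'\sim_O\opp{a}$, $e'\neq\opp{a}$, with split lifts, which it currently omits.
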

\begin{proof}
We will use the same notation as in the proof of Lemma \ref{lem: Unfolding origamis}: $u=\iota(a_1)=\iota(a_2)$, $v_i=\tau(a_i)$ and for their images in $\Delta'$ we write $u'=f(u)$ and $v=f(v_1)=f(v_2)$. As in the earlier lemma, apart from $u$, $v_1$ and  $v_2$, together with $a_1$, $a_2$ and their opposites, $f$ maps the vertices and edges of $\Delta$ bijectively to $\Delta'$.

To define the origami $\Omega'$ we need to specify the open equivalence relation $\sim_O$ on the edges of $\Delta'$, and we do this by setting it to be the finest equivalence relation on the edges of $\Delta'$ with the property that $e_1\sim_O e_2$ implies that $f(e_1)\sim_O f(e_2)$. This relation can be described exactly using a brief case analysis: if neither $e_1$ nor $e_2$ is open-equivalent to any of $a_1$, $a_2$, $\opp{a}_1$ or $\opp{a}_2$, then $f(e_1)\sim_O f(e_2)$ if and only if $e_1\sim_O e_2$; if $e_1\sim_O a_i$ then $f(e_2)\sim_O f(e_1)$ if and only if $e_2\sim_O a_1$ or $e_2\sim_O a_2$; and if $e_1\sim_O \opp{a}_i$ then $f(e_2)\sim_O f(e_1)$ if and only if $e_2\sim_O \opp{a}_1$ or $e_2\sim_O \opp{a}_2$.

To check that this defines an origami $\Omega'$ on $\Delta'$, we need to describe the vertex graph $V_{\Omega'}$ and the edge graph $E_{\Omega'}$. These are both obtained by the reverse procedure to that of the proof of Lemma \ref{lem: Unfolding origamis}: that is, they are obtained by folding the pairs $\{a_1,a_2\}$ and $\{\opp{a}_1,\opp{a}_2\}$ in $V_\Omega$ and $E_\Omega$, respectively.  In particular, connectivity in the vertex and edge graphs is preserved, and the fact that $\Omega'$ is an essential origami with the desired properties follows quickly, except for the claim that $\Omega'$ is locally consistent, which is more delicate. We will therefore check local consistency in detail, and leave the reader to fill in the remaining arguments, which are straightforward.

Suppose, therefore, that $e'_1,e'_2\in [e']_O$ for edges $e'_i=f(e_i)$ and $e'=f(e)$. To check local consistency, we need to find a path in $V_{\Omega'}$ from $\iota(e'_1)$ to $\iota(e'_2)$ that does not cross $e'$.  By the definition of $\Omega'$, unless $e'\sim_O\opp{a}$, we have that $e_1,e_2\in[e]_O$. Therefore, by the local consistency of $\Omega$, the shortest path $\gamma$ in $V_\Omega$ from $\iota(e_1)$ to $\iota(e_2)$ does not cross $e$, and so $f(\gamma)$ is the required path from $\iota(e'_1)$ to $\iota(e'_2)$.

It remains to consider the case in which $e'\sim_O\opp{a}$. By the definition of $\Omega'$, $e_1\sim_O \opp{a}_i$ and $e_2\sim_O\opp{a}_j$, for some $i,j\in\{1,2\}$. We claim that $\gamma_1$, the shortest path in $V_\Omega$ from $\iota(e_1)$ to $v_i$, crosses no preimage of $e'$. Indeed, let $\epsilon$ be any edge crossed by $\gamma_1$. The local consistency of $\Omega$ implies that $\epsilon\nsim_O\opp{a}_i$. Also, some subpath $\beta$ of $\gamma_1$ is an embedded path from $\iota(\epsilon)$ to $v_i$, so the concatenation of $\beta$ with $\opp{a}_1$ and $\opp{a}_2$ (suitably oriented) is the unique embedded path from $\iota(\epsilon)$ to $v_{3-i}$. Since this embedded path traverses $\opp{a}_{3-i}$, it follows that $\epsilon\nsim_O \opp{a}_{3-i}$ by the local consistency of $\Omega$. Thus $\epsilon$ is open-equivalent to neither $\opp{a}_1$ nor $\opp{a}_2$, so $f(\epsilon)\nsim_O \opp{a}$, and in particular $f(\epsilon)\neq e'$, as required.

Similarly, the shortest path $\gamma_2$ from $\iota(e_2)$ to $v_j$ also crosses no preimage of $e'$. The images $f(\gamma_1)$ and $f(\gamma_2)$ both end at $v$, and so their concatenation is a path in $V_{\Omega'}$ from $\iota(e'_1)$ to $\iota(e'_2)$ that does not cross $e'$. This proves the local consistency of $\Omega'$, which completes the proof.
\end{proof}

With these lemmas in hand, we can now prove the aforementioned theorem: essential origamis induce $\pi_1$-injective maps.

\begin{theorem}\label{thm: Essential origamis give injective maps of groups}
Let $\Omega$ be an origami on a finite, connected graph $\Delta$.  If $\Omega$ is essential then the quotient map $q:\Delta\to\Delta/\Omega$ induces an injective homomorphism of fundamental groups.
\end{theorem}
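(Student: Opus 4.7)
The plan is to prove this by induction on the number of edges of $\Delta$, using the two preceding lemmas to strip off folds one at a time while preserving essentiality of the origami, until the quotient map becomes an immersion, at which point Stallings' result applies.

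More precisely, the base case is when $q:\Delta\to\Delta/\Omega$ is already an immersion. In this case, by \cite[Proposition 5.3]{stallings_topology_1983}, $q$ induces an injective homomorphism on fundamental groups, and there is nothing more to do. For the inductive step, assume $q$ is not an immersion. By Lemma \ref{lem: Producing a foldable pair}, there exist distinct edges $a_1,a_2$ of $\Delta$ with $\iota(a_1)=\iota(a_2)$ and $a_1\sim_O a_2$. Apply Lemma \ref{lem: Folding origamis} to this pair: this produces an essential fold $f_a:\Delta\to\Delta'$ and an essential origami $\Omega'$ on $\Delta'$ such that $f_a$ descends to an isomorphism $\Delta/\Omega\cong\Delta'/\Omega'$, identifying $q$ with the composition $q'\circ f_a$, where $q':\Delta'\to\Delta'/\Omega'$ is the quotient map.

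Since the fold $f_a$ identifies $a_1$ with $a_2$ (and $\opp{a}_1$ with $\opp{a}_2$), the graph $\Delta'$ has strictly fewer edges than $\Delta$, so the inductive hypothesis applies: $q'_*$ is injective on fundamental groups. Moreover, by Lemma \ref{lem: Folding origamis}, the fold $f_a$ is essential, hence by Definition \ref{def: Fold} it is a homotopy equivalence and so induces an isomorphism of fundamental groups. Therefore $q_* = q'_* \circ (f_a)_*$ is the composition of an isomorphism and an injection, and is thus injective, completing the inductive step.

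The main obstacles in this argument have already been overcome in the preceding two lemmas: Lemma \ref{lem: Producing a foldable pair} uses essentiality (the forest property of $V_\Omega$ and $E_\Omega$) together with local consistency to show that any failure of the immersion property is detected by an open-equivalence between two edges with a common initial vertex, and Lemma \ref{lem: Folding origamis} transports essentiality across such a fold. Once these are in place, the theorem itself is a short induction, since the only remaining observation is that each fold strictly decreases the edge count while leaving the quotient graph unchanged up to canonical isomorphism.
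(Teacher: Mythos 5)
Your proof is correct and follows essentially the same route as the paper: use Lemma \ref{lem: Producing a foldable pair} to exhibit a foldable pair when $q$ is not an immersion, push the origami across the resulting essential fold via Lemma \ref{lem: Folding origamis}, and conclude by induction (the paper inducts on $|E_\Delta|-|E_{\Delta/\Omega}|$ rather than $|E_\Delta|$, but since the fold drops the former by exactly one while fixing $\Delta/\Omega$, this is only a cosmetic difference). Your ``base case is when $q$ is an immersion'' is phrased a bit loosely for an induction on edge count, but the descent is well-founded and the logic is sound.
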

\begin{proof}
The proof is by induction on the difference between the number of edges of $\Delta$ and the number of edges of $\Delta/\Omega$. In the base case, that difference is zero, meaning that every component of $E_\Omega$ contains a unique edge of $E_\Delta$. This implies that $\Omega$ is the trivial origami of Example \ref{ex: Trivial origami}, so the quotient map is an isomorphism and the result is immediate.

For the inductive step, either $q$ is an immersion and hence $\pi_1$-injective, so there is nothing to prove, or Lemma \ref{lem: Producing a foldable pair} provides a pair of edges $a_1$ and $a_2$. By Lemma \ref{lem: Folding origamis}, the quotient map $q$ factors through the essential fold $f=f_a:\Delta\to \Delta'$ that identifies $a_1$ and $a_2$, and $\Omega$ descends to an essential origami $\Omega'$ on $\Delta'$.

The folded graph $\Delta'$ has one fewer edges than $\Delta$, so the quotient map $q':\Delta'\to\Delta'/\Omega'$ is $\pi_1$-injective by induction. Thus
\[
q=q'\circ f
\]
is also $\pi_1$-injective, as required, because $f$ is an essential fold.
\end{proof}

The theorem tells us that essential origamis can be used to certify the $\pi_1$-injectivity of a quotient map. More generally, origamis can also provide certificates of $\pi_1$-injectivity for morphisms that factor through their quotient maps.

\begin{definition}\label{def: Compatible origami}
Let $f:\Delta\to\Gamma$ be a morphism of graphs. An origami $\Omega$ on $\Delta$ is said to be \emph{compatible with $f$} if $f$ factors through the quotient map $\Delta\to\Delta/\Omega$, and the factor map $\Delta/\Omega\to\Gamma$ is an immersion.  
\end{definition}

\begin{remark}\label{rem: Certifying compatibility}
Note that compatibility is easy to certify: $f$ is compatible with $\Omega$ if and only if all of the following conditions hold.
\begin{enumerate}[(i)]
\item If edges $e_1,e_2$ of $\Delta$ are in the same component of $E_\Omega$ then $f(e_1)=f(e_2)$.
\item If vertices $v_1,v_2$ of $\Delta$ are in the same component of $V_\Omega$ then $f(v_1)=f(v_2)$.
\item If $e_1,e_2$ are edges of $\Delta$ with $f(e_1)=f(e_2)$ and $\iota(e_1),\iota(e_2)$ are in the same component of $V_\Omega$ then $e_1,e_2$ are in the same component of $E_\Omega$.
\end{enumerate}
\end{remark}

Putting together the results of this section, we can prove that origamis can be used to certify $\pi_1$-injectivity for morphisms of graphs.

\begin{proof}[Proof of Theorem \ref{thm: Essential origamis}]
If there is an essential origami $\Omega$ compatible with $f$ then, by definition, $f$ factors as $\bar{f}\circ q$, where $\bar{f}$ is an immersion and $q$ is the quotient map $\Delta\to\Delta/\Omega$. By Theorem \ref{thm: Essential origamis give injective maps of groups}, $q$ is $\pi_1$-injective because $\Omega$ is essential, while $\bar{f}$ is $\pi_1$-injective because it is an immersion, so $f$ is also $\pi_1$-injective. 

For the other direction, by Stallings' folding lemma $f$ factors as $\bar{f}\circ f_0$, where $\bar{f}$ is an immersion and $f_0$ is $\pi_1$-surjective.  Since $f$ is $\pi_1$-injective, it follows that $f_0$ is in fact a homotopy equivalence and so, by Proposition \ref{prop: Essential origamis realise homotopy equivalences}, $\Delta$ admits an essential origami  for which $f_0$ is the quotient map. Thus, $\Omega$ is indeed compatible with $f$, as required.
\end{proof}

\section{Branched 2-complexes and their morphisms}\label{sec: Essential maps of 2-complexes}

Intuitively, a 2-dimensional cell complex, or 2-complex, is the result of gluing a set of discs to a graph along their boundaries.  This is a fundamental notion in topology, but our setting requires a few unusual modifications of the standard definition, and the precise combinatorial model that we use will also be important in proving the rationality theorem. We therefore develop the definitions carefully here.

\subsection{Branched 2-complexes}

\begin{definition}[2-complex]\label{def: 2-complex}
A \emph{2-complex} $X$ consists of a pair of (Serre) graphs $\Gamma=\Gamma_X$ and $S=S_X$, together with an \emph{attaching map} $w=w_X:S_X\to\Gamma_X$ subject to two conditions:
\begin{enumerate}[(i)]
\item the realisation of $S$ is a disjoint union of circles; and
\item the morphism $w$ is required to be an immersion.
\end{enumerate}
The graph $\Gamma$ is the \emph{1-skeleton} of $X$.  The \emph{vertices} of $X$ are the vertices of $\Gamma$, so we write $V_X$ for $V_\Gamma$, and likewise the \emph{edges} of $X$ are the edges of $\Gamma$, so we write $E_X$ for $E_\Gamma$. The path components $f\in\pi_0S$ are called the \emph{2-cells} or \emph{faces} of $X$, so we write $F_X$ for $\pi_0S$.  The \emph{realisation} of $X$, also denoted by $X$, is obtained by coning off each face $f$ to produce a disc $D_f$, and then gluing the discs $D_f$ to the 1-skeleton $\Gamma$ along the map $w$. Write $D=D_X$ for the disjoint union of the discs, so the realisation is $\Gamma\cup_w D$.

A \emph{morphism} of 2-complexes $\phi:Y\to X$ consists of a morphism of graphs $\phi:\Gamma_Y\to\Gamma_X$ and a morphism of graphs $\phi:S_Y\to S_X$, injective on path components, such that $\phi\circ w_Y=w_X\circ \phi$. Note that a morphism induces a continuous map of realisations (also denoted by $\phi:Y\to X$) which sends cells of $Y$ homeomorphically to cells of $X$.
\end{definition}

\begin{remark}
The second hypothesis, that $w_X$ should be an immersion, differentiates this definition from the standard one. However, note that it can always be achieved by modifying $w_X$ by a homotopy, except in the trivial case where some face is attached along a loop that is null-homotopic in the 1-skeleton.  After discarding such trivial faces, there is therefore no harm in imposing the second hypothesis.
\end{remark}

For a wealth of finite examples, the reader should have in mind the standard presentation complex of any finite group presentation
\[
G=\langle a_1,\ldots,a_m\mid w_1,\ldots,w_n\rangle\,.
\]
The second hypothesis in the definition corresponds to assuming that each relator $w_j$ is a non-trivial element of the free group on the generators.

In order to achieve our goal of `linearising' the study of 2-complexes, it is fruitful to enrich them by equipping them with some extra information -- a notion of area. This leads to the definition of a \emph{branched} 2-complex. A branched 2-complex is just a 2-complex with a little extra structure, but some care is needed about the morphisms allowed between them.

\begin{definition}[Branched 2-complex]\label{def: Branched 2-complex}
A \emph{branched 2-complex} is a 2-complex $X$ and an \emph{area function} $\Area:F_X\to\mathbb{R}_{\geq 0}$. That is, the area function assigns a non-negative area $\Area(f)$ to each face $f$ of $X$. 

Morphisms of branched complexes are called \emph{branched morphisms}, to distinguish them from the standard morphisms of 2-complexes in Definition \ref{def: 2-complex}, and are defined as follows.

A \emph{branched morphism} $\phi:Y\to X$ consists of a morphism of graphs $\phi:\Gamma_Y\to\Gamma_X$ and an immersion $\phi: S_Y\to S_X$ such that $\phi\circ w_Y=w_X\circ \phi$, and also subject to the following condition. For each face $f$ of $Y$, the immersion $\phi$ necessarily defines a covering map of circles $\phi_f:f\to \phi(f)$. The degree of this covering map is denoted by $m_\phi(f)$ and is called the \emph{multiplicity} of $f$. The extra condition requires that
\[
\Area(f)=m_\phi(f)\Area(\phi(f))
\]
for each face $f$ of $Y$.

As in the case of unbranched 2-complexes, a branched morphism induces a continuous map of realisations. However, while this continuous map sends vertices to vertices and edges homeomorphically to edges, the induced maps between faces may no longer be homeomorphic. Rather, on the realisations of faces $D_f\to D_{\phi(f)}$, the realisation of $\phi$ can be modelled by the polynomial map $z\mapsto z^{m_\phi(f)}$ on the unit disc in the complex plane. This justifies the terminology `branched complex' and `branched morphism'.
\end{definition}

Since branched 2-complexes are in particular 2-complexes, many definitions can be transported directly from 2-complexes to branched 2-complexes. For instance, the fundamental group of a branched 2-complex is the fundamental group of its underlying 2-complex. However,  branched 2-complexes can also be thought of as a generalisation of 2-complexes, since we allow more morphisms between them. 

\begin{remark}\label{rem: Standard 2-complexes}
A 2-complex $X$ admits a canonical structure as a branched complex, by setting $\Area(f)=1$ for each face $f$ of $X$.  Such branched 2-complexes will sometimes be called \emph{standard}. Note that branched morphisms between standard 2-complexes are exactly the usual morphisms specified in Definition \ref{def: 2-complex}.
\end{remark}

Each vertex $v$ of a (branched) 2-complex $X$ has a \emph{link} $\Lk_X(v)$, a naturally defined graph whose realisation can be thought of as a small sphere around $v$. It can also be defined formally as follows.

\begin{definition}[Links in 2-complexes]\label{def: Links in 2-complexes}
Let $X$ be a 2-complex defined by an immersion of graphs $w:S\to \Gamma$ as above.  Both vertices and edges of $X$ have links. The \emph{link of an edge} $e$ of $X$ is the set
\[
\Lk_X(e)=w^{-1}(e)\,.
\]
The \emph{link of a vertex} $v$ of $X$ is the graph $L=\Lk_X(v)$, defined as follows. The vertex set of $L$ is $\Lk_\Gamma(v)$, i.e.\ the set of edges of $\Gamma$ with initial vertex $v$, and the edge set of $L$ is exactly $w^{-1}(\Lk_\Gamma(v))$, i.e.\ the edges of $S$ whose initial vertex maps to $v$. The terminus map $\tau_L$ of $L$ is defined to coincide with $w$ (so the terminal vertex of $e$ is $w(e)$) and so, to complete the definition of $L$, it remains to specify the orientation-reversing involution on edges, which is defined as follows: for an edge $e$, the opposite edge $\opp{e}$ is the unique edge of $S$ not equal to $e$ such that $\iota_S(\opp{e})=\iota_S(e)$. (Note that $\opp{e}$ exists and is unique precisely because $S$ is a disjoint union of circles.)

For each vertex $v$ of $X$, a branched morphism $\phi:X\to Y$ induces a morphism of graphs $d\phi_v:\Lk_X(v)\to\Lk_Y(\phi(v))$.
\end{definition} 

\begin{remark}\label{rem: No self-loops in links}
The requirement in Definition \ref{def: 2-complex} that $w$ should be an immersion means that it does not factor through a fold, so
\[
\tau_L(e)=w(e)\neq w(\opp{e})=\tau_L(\opp{e})\,.
\]
Hence, the two endpoints of each edge in $\Lk_X(v)$ are distinct.
\end{remark}

Links of vertices and edges come with some important extra structure.

\begin{remark}\label{rem: Extra structure on links}
The orientation-reversing involution on the edges of $S$ induces a canonical bijection $\Lk_X(e)\to \Lk_X(\opp{e})$, for any edge $e$ of $X$. Likewise, the origin map $\iota$ canonically identifies $\Lk_X(e)$ with a set of edges in $\Lk_X(\iota(e))$: indeed, they are exactly the edges of $\Lk_X(\iota(e))$ that adjoin (the vertex) $e$ in $\Lk_X(\iota(e))$. Formally, this says that $\iota$ induces a bijection 
\[
\Lk_X(e)\to \Lk_{\Lk_X(\iota(e))}(e)
\]
where, confusingly, $e$ is thought of as a \emph{vertex} of the graph  $\Lk_X(\iota(e))$.  Equipped with the extra data of the orientation-reversing involutions on links of edges and the inclusions of links of edges into links of vertices, the entire complex $X$ can be reconstructed from the set of links of edges and vertices.
\end{remark}

\subsection{Essential maps}

Just as Stallings' folding lemma explains how to put maps of graphs into a useful form, so we may likewise fold maps of branched 2-complexes.  Here, the role of immersions is played by \emph{branched immersions}.

\begin{definition}\label{def: Branched immersion}
A branched morphism $\phi:Y\to X$ is a \emph{branched immersion} if its realisation is locally injective away from the centres of 2-cells. Equivalently, the induced map on links $d\phi_v$ is injective for each vertex $v$ of $Y$. In the case where $X$ and $Y$ are standard, such a map is just called an \emph{immersion}, and the realisation is locally injective everywhere.
\end{definition}

The next result is the analogue of Stallings' folding lemma (Lemma \ref{lem: Stallings folding lemma}) for 2-complexes. Cousins of this result have already been used extensively (see \cite{louder_stackings_2017,louder_one-relator_2020,louder_negative_2022,louder_uniform_2024}), but we give a careful proof here for completeness. 

\begin{lemma}[Folding 2-complexes]\label{lem: Folding 2-complexes}
Every branched morphism of finite, connected, branched 2-complexes  $\phi:Y\to X$ factors as
\[
Y\stackrel{\phi_0}{\to}\bar{Y}\stackrel{\bar{\phi}}{\to} X
\]
where $\phi_0$ is a surjection on fundamental groups and $\bar{\phi}$ is a branched immersion.  Furthermore, $\bar{Y}$ enjoys a universal property: whenever $\phi$ factors through a branched immersion $\psi:Z\to X$, $\bar{\phi}$ also uniquely factors through $Z$.
\end{lemma}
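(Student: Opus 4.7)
The plan is to adapt Stallings' folding lemma (Lemma~\ref{lem: Stallings folding lemma}) to the branched 2-complex setting by iteratively resolving failures of the branched-immersion condition. Define the complexity
\[
c(Y) := |E_{\Gamma_Y}| + |E_{S_Y}|.
\]
If $\phi$ is already a branched immersion, set $\bar{Y}=Y$ and stop; otherwise, I construct a quotient $Y\to Y'$ through which $\phi$ factors, with $c(Y')<c(Y)$, and iterate. Strict decrease of $c$ guarantees termination after finitely many steps.

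At each step where $\phi$ is not a branched immersion, there is a vertex $v\in V_Y$ at which $d\phi_v$ fails to be injective. Since $\phi:S_Y\to S_X$ is already an immersion by the definition of a branched morphism, the failure falls into one of two types: (I) two distinct edges $e_1,e_2\in\Lk_{\Gamma_Y}(v)$ satisfy $\phi(e_1)=\phi(e_2)$; or (II) two distinct edges $s_1,s_2\in E_{S_Y}$ whose $S_Y$-initial vertices both map to $v$ under $w_Y$ satisfy $\phi(s_1)=\phi(s_2)$ (necessarily with distinct $S_Y$-initial vertices, by the immersion property of $\phi$ on $S_Y$). In Case (I), I identify $e_1\sim e_2$ (and $\opp{e}_1\sim\opp{e}_2$) in $\Gamma_Y$; this may disturb the immersion property of $w_Y$ at some vertex $p\in V_{S_Y}$ whose two incident edges map into $\{e_1,e_2\}$, which I repair by performing the corresponding fold in $S_Y$ and propagating. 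In Case (II), I identify $s_1\sim s_2$ in $S_Y$, which forces identifications of their initial/terminal vertices in $S_Y$ and of their opposite edges, propagating until $\phi:S_Y\to S_X$ is again an immersion on the quotient. Throughout, the area function on the quotient is inherited so as to preserve the branched morphism condition $\Area(f)=m_\phi(f)\Area(\phi(f))$, possibly by merging the areas of faces whose attaching circles become identified.

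The resulting $Y'$ is a finite branched 2-complex, and the quotient map $Y\to Y'$ is a branched morphism through which $\phi$ factors by construction. It is a $\pi_1$-surjection because each elementary identification is a fold of two edges sharing a common vertex, which is always $\pi_1$-surjective (its homotopy fibre is a tree). Iterating produces the desired factorisation $Y\xrightarrow{\phi_0}\bar{Y}\xrightarrow{\bar{\phi}}X$ with $\bar{\phi}$ a branched immersion.

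For the universal property, suppose $\phi=\psi\circ\alpha$ where $\psi:Z\to X$ is a branched immersion and $\alpha:Y\to Z$ is a branched morphism. I proceed by induction on the fold count. At each fold, the identification performed is \emph{forced} by the injectivity of $d\psi$ at the corresponding vertex of $Z$: since $\phi(e_1)=\phi(e_2)$ (respectively $\phi(s_1)=\phi(s_2)$) and $\psi$ is injective on links, the two images in $\Lk_Z(\alpha(v))$ coincide, so $\alpha(e_1)=\alpha(e_2)$ (respectively $\alpha(s_1)=\alpha(s_2)$) in $Z$. The propagated identifications are forced by the same argument applied recursively. Hence $\alpha$ descends through each quotient, and in the limit uniquely factors through a branched morphism $\bar{Y}\to Z$, which verifies the universal property. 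The main technical obstacle is the careful bookkeeping of the area function and of the disjoint-union-of-circles structure of $S_Y$ during the Case (II) propagation, where faces may merge or collapse to trivial attachings that must then be discarded.
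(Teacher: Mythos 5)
Your approach---iterative folding of both $\Gamma_Y$ and $S_Y$---is genuinely different from the paper's, which applies Stallings' folding lemma once to the $1$-skeleton and then constructs $S_{\bar{Y}}$ in a single step as the image of $S_Y$ in the fibre product $\bar{\Gamma}_Y \times_{\Gamma_X} S_X$ (cf.\ Remark~\ref{rem: Immersions and fibre products}). The fibre-product formulation makes the entire $2$-cell side of the construction automatic: each circle of $S_Y$ immerses into the fibre product, which in turn immerses into the disjoint union of circles $S_X$, so the image is a disjoint union of circles and the branched-immersion property of $\bar{\phi}$ is immediate. Your iterative route can be made to work, but as written it has genuine gaps that the fibre-product construction sidesteps.

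First, you have not verified that a Case~(I) fold preserves the requirement that $w_Y$ be an immersion, and your worry that it might not is unfounded. If $s_1\neq s_2$ were edges of $S_Y$ with $\iota_{S_Y}(s_1)=\iota_{S_Y}(s_2)$ whose images $w_Y(s_1),w_Y(s_2)$ became identified by the fold, then $\phi(s_1)$ and $\phi(s_2)$ would be edges of $S_X$ at a common vertex with $w_X(\phi(s_1))=w_X(\phi(s_2))$, so the immersion $w_X$ forces $\phi(s_1)=\phi(s_2)$, and then the immersion $\phi\vert_{S_Y}$ forces $s_1=s_2$, a contradiction. Your proposed ``repair'' fold in $S_Y$ during Case~(I) therefore never fires; including it without noticing this leaves unresolved exactly the question of whether $S_Y$ remains a disjoint union of circles.

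Second, and more seriously, a Case~(II) identification $s_1\sim s_2$ in $S_Y$ requires $w_Y(s_1)=w_Y(s_2)$ in order for $w_Y$ to descend to the quotient, and your hypotheses (the $S_Y$-initial vertices of $s_1,s_2$ both map to $v$, and $\phi(s_1)=\phi(s_2)$) do not give this directly. What does force $w_Y(s_1)=w_Y(s_2)$ is that $\phi\vert_{\Gamma_Y}$ is already an immersion at $v$, since then $\phi(w_Y(s_1))=\phi(w_Y(s_2))$ implies $w_Y(s_1)=w_Y(s_2)$. Hence Case~(I) must be fully exhausted \emph{before} any Case~(II) identification is made; your proposal imposes no such ordering, and without it the quotient need not be a well-defined $2$-complex. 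Once the ordering is imposed, the Case~(II) propagation does keep $S_Y$ a disjoint union of circles with immersed attaching maps, and no face degenerates---contrary to your closing remark, there are no ``trivial attachings to be discarded''---but this must be argued rather than flagged as an obstacle. Finally, your justification of $\pi_1$-surjectivity (``its homotopy fibre is a tree'') applies only to the $\Gamma_Y$-folds; the Case~(II) identifications leave $\Gamma_Y$ untouched and only enlarge the normal closure generated by the attaching words, so they are $\pi_1$-surjective for a different (easy) reason that should be stated separately.
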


The idea of the proof of Lemma \ref{lem: Folding 2-complexes} is straightforward: fold the map of 1-skeleta using Lemma \ref{lem: Stallings folding lemma}, and then identify any 2-cells whose images coincide. The only subtlety is to make precise sense of the idea of 2-cells coinciding, especially in the broader setting of branched complexes. This can be done using fibre products of graphs, as described by Stallings \cite[\S1.3]{stallings_topology_1983}. 

A branched morphism of 2-complexes $\phi:Y\to X$ can be thought of as a commutative diagram, as follows.
\begin{center}
  \begin{tikzcd}
S_Y\arrow{r}{\phi}\arrow{d}{w_Y} & S_X\arrow{d}{w_X}\\
\Gamma_Y\arrow{r}{\phi} & \Gamma_X
  \end{tikzcd}
\end{center}
Recall that we always assume that $w_X$ and $w_Y$ are immersions of graphs.

\begin{lemma}\label{lem: Immersions and fibre products}
A branched morphism $\phi:Y\to X$ is a branched immersion if and only if:
\begin{enumerate}[(i)]
\item the map of 1-skeleta $\phi:\Gamma_Y\to \Gamma_X$ is an immersion of graphs; and
\item the canonical map to the fibre product
\[
S_Y\to \Gamma_Y\times_{\Gamma_X} S_X
\]
is an embedding.
\end{enumerate}
\end{lemma}
\begin{proof}
Throughout the proof, it is useful to recall that, on each 2-cell $D_f$ of $Y$, the realisation of $\phi$ can be parametrised as the polynomial
\[
z\mapsto z^{m_\phi(f)}\,,
\]
where both $D_f$ and $D_{\phi(f)}$ are identified with the unit disc in the complex plane. In particular, each $s\in S_Y$ and $r\in S_X$ is identified with a complex number of modulus 1. We will use this notation for maps between 2-cells.

If $\phi$ is a branched immersion, then its restriction to the complement of the centres of the 2-cells of $Y$ is locally injective. In particular, the restriction to the 1-skeleton $\phi:\Gamma_Y\to\Gamma_X$ is an immersion, which proves (i).

For (ii), consider distinct $s_1,s_2\in S_Y$ having the same image $(y,r)$ in the fibre product. Let $s_i$ be contained in the face $f_i$ of $Y$, and let $m_i=m_{\phi}(f_i)$, for each $i=1,2$.  If the realisation map $\phi$ is a branched immersion then it is injective in some neighbourhood $U$ of $y\in\Gamma_Y\subseteq Y$.  Since $w_Y(s_1)=w_Y(s_2)=y$, in the realisation of $Y$ the points $s_1$ and $s_2$ in $S_Y$ are both identified with $y\in\Gamma_Y$.   Therefore, we may choose $t<1$ large enough that both $t^{1/m_1}s_1$ and $t^{1/m_2}s_2$ are contained in $U$ (thinking of $s_1$ and $s_2$ as unit-modulus complex numbers). By definition of the realisation map,
\[
\phi(t^{1/m_1}s_1)=(t^{1/m_1})^{m_1}\phi(s_1)=tr=(t^{1/m_2})^{m_2}\phi(s_2)=\phi(t^{1/m_2}s_2)
\]
so, since $\phi$ is injective on $U$, $t^{1/m_1}s_1=t^{1/m_2}s_2$. In particular, by comparing arguments, $s_1=s_2$ as required.

To prove the converse direction, note that all branched morphisms are locally injective away from the the centres of the 2-cells and the 1-skeleton, so it suffices to prove local injectivity around a point $y\in \Gamma_Y$. By item (i), we may choose an open neighbourhood $W$ of $y$ in the 1-skeleton $\Gamma_Y$ such that $\phi$ is injective on $W$. Let $D_Y^0$ denote the complement of the centres  in $D_Y$, the disjoint union of the 2-cells. Note that the map $w_Y:S_Y\to \Gamma_Y$ extends continuously to $D^0_Y$ by setting $w_Y(ts)=w_Y(s)$.  Let
\[
U= W\cup w_Y^{-1}(W)\,,
\]
an open neighbourhood of $y$ in the realisation of $Y$. It suffices to show that $\phi$ is injective on $U$. Injectivity is immediate on $W$.

Suppose that $t_1s_1$ and $t_2s_2\in U\smallsetminus W$ have the same image under $\phi$. As above, let $s_i$ be contained in the face $f_i$ of $Y$, and let $m_i=m_{\phi}(f_i)$, for each $i=1,2$, so $\phi(t_is_i)=t_i^{m_i}\phi(s_i)$. In particular, by examining the argument we see that $\phi_1(s_1)$ and $\phi_2(s_2)$ have the same image $r\in S_X$, while by examining the modulus we see that $t_1^{m_1}=t_2^{m_2}$. Commutativity of the associated diagram implies that
\[
\phi\circ w_Y(s_i)=w_X\circ\phi(s_i)=w_X(r)
\]
for each $i=1,2$ so, since $\phi$ is injective on $W$, $w_Y(s_1)=w_Y(s_2)=y$ for some $y\in W\subseteq  \Gamma_Y$. Now, $s_1$ and $s_2$ both map to $(y,r)$ in the fibre product so, by item (ii), $s_1=s_2$. In particular, they lie in the same face, so $m_1=m_2$, whence $t_1=t_2$,  which completes the proof.
\end{proof}

With this lemma in hand, the proof of the folding lemma becomes routine.

\begin{proof}[Proof of Lemma \ref{lem: Folding 2-complexes}]
Applying Stallings' folding lemma to the map of 1-skeleta $\phi:\Gamma_Y\to \Gamma_X$, one obtains a factorisation of maps of graphs
\[
\Gamma_Y\to \bar{\Gamma}_Y\to \Gamma_X
\]
where $\Gamma_Y\to \bar{\Gamma}_Y$ is a $\pi_1$-surjection and $\bar{\Gamma}_Y\to \Gamma_X$ is an immersion. The natural map $S_Y \to S_X$ factors through the fibre product of graphs $S_X\times_{\Gamma_X} \bar{\Gamma}_Y$\,, and setting $S_{\bar{Y}}$ to be the image of $S_Y$ in $S_X\times_{\Gamma_X} \bar{\Gamma}_Y$ defines the required complex $\bar{Y}$. 

It is clear that $\phi_0$ is a $\pi_1$-surjection, because it is true on the 1-skeleta by the result for graphs.  The map of 1-skeleta $\bar{\Gamma}_Y\to \Gamma_X$ is an immersion and $S_{\bar{Y}}$ embeds in $\bar{\Gamma}_Y\times_{\Gamma_X} S_X$ by construction, so $\bar{\phi}$ is a branched immersion by Lemma \ref{lem: Immersions and fibre products}.

It remains to prove the universal property. To that end, suppose that $Y\to X$ factors through a branched immersion $Z\to X$.  By Stallings' folding lemma for graphs, the map of 1-skeleta $\bar{\Gamma}_Y$ factors uniquely through $\Gamma_Z\to \Gamma_X$. Since $Z\to X$ is a branched immersion, $S_Z$ can be identified with a subset of the fibre product $\Gamma_Z\times_{\Gamma_X} S_X$.  The diagram
\begin{center}
  \begin{tikzcd}
S_{\bar{Y}}\arrow{r}\arrow{rd}&\bar{\Gamma}_Y\arrow{r} & \Gamma_Z\arrow{d}\\
& S_X \arrow{r} & \Gamma_X
  \end{tikzcd}
\end{center}
commutes so, by the universal property of fibre products, there is a canonical map $S_{\bar{Y}}\to \Gamma_Z\times_{\Gamma_X} S_X$. Since $S_Y\to S_Z\subseteq \Gamma_Z\times_{\Gamma_X} S_X$ factors through $S_{\bar{Y}}$, the image of $S_{\bar{Y}}$ in $\Gamma_Z\times_{\Gamma_X} S_X$ is contained in $S_Z$, which completes the proof.
\end{proof}

Since branched immersion can be recognised locally, purely by looking at links, we would like to work with them whenever possible. However, we cannot restrict our attention solely to branched immersions, since we sometimes need to unfold in order to analyse the structure of a 2-complex $X$. Maps for which the folded representative is similar to the domain are therefore especially useful to work with. This motivates the definition of an \emph{essential map}.

\begin{definition}[Essential maps]\label{def: Essential map}
A branched morphism $\phi:Y\to X$ is an \emph{essential equivalence} if it satisfies the following two conditions:
\begin{enumerate}[(i)]
\item the map of 1-skeleta $\phi:\Gamma_Y\to\Gamma_X$ is a homotopy equivalence;
\item the map $\phi:S_Y\to S_X$ is an isomorphism.
\end{enumerate} 
More generally, a branched morphism $\phi:Y\to X$ is \emph{essential} if the folded map $\phi_0:Y\to\bar{Y}$ provided by Lemma \ref{lem: Folding 2-complexes} is an essential equivalence.
\end{definition}

\subsection{Irreducible 2-complexes}

With the definition of an essential map in hand, we are ready to define the sets $\Irred(X)$ and $\Surf(X)$ mentioned in the introduction.  We start with $\Surf(X)$, which is slightly easier to characterise.

\begin{definition}\label{def: Surf(X)}
A branched 2-complex $Y$ is said to be a \emph{surface} if its realisation is homeomorphic to a surface, or equivalently if $\Lk_Y(v)$ is a circle for every vertex $v$.  For any 2-branched complex $X$, the set $\Surf(X)$ consists of all essential maps $Y\to X$, where $Y$ is a finite (but not necessarily connected) surface.
\end{definition}

The set $\Irred(X)$ is slightly more complicated to define. We give a definition in terms of links of vertices.

\begin{definition}\label{def: (Visibly) irreducible 2-complex}
A branched 2-complex $Y$ is said to be \emph{visibly irreducible} if, for every vertex $v$ of $Y$, the link $\Lk_Y(v) $ satisfies the following conditions:
\begin{enumerate}[(i)]
\item $\Lk_Y(v)$ is finite with at least two vertices;
\item $\Lk_Y(v)$ is a core graph, i.e.\ every vertex has valence at least 2;
\item $\Lk_Y(v)$ is connected;
\item $\Lk_Y(v)$ has no cut vertices.
\end{enumerate}
\end{definition}

Roughly speaking, the point of this definition is that any finite 2-complex can be unfolded to a wedge of a graph, some discs, and some visibly irreducible factors. The reader is referred to \cite[\S4]{wilton_rational_2024} for a full discussion.

The set $\Irred(X)$ can then be characterised in the same way as $\Surf(X)$.

\begin{definition}\label{def: Irred(X)}
Let $X$ be a branched 2-complex. The set $\Irred(X)$ consists of all essential maps $Y\to X$, where $Y$ is finite and visibly irreducible, but not necessarily connected.
\end{definition}

\subsection{\texorpdfstring{$\Pi$}{Pi}-complexes and origamis}

To handle the two definitions of $\Surf(X)$ and $\Irred(X)$ uniformly, we can make the following common generalisation.

\begin{definition}\label{def: Essential properties}
A set $\Pi$ of graphs is called \emph{suitable} if $\Pi$ is closed under isomorphism and every graph in $\Pi$ is finite, connected and contains at least one edge. 
For any suitable set of graphs $\Pi$, a \emph{$\Pi$-complex} is a finite branched 2-complex $Y$ such that the link of every vertex of $Y$ is in $\Pi$. The set $\Ess_\Pi(X)$ consists of all essential maps $Y\to X$, where $Y$ is a finite, but not necessarily connected, $\Pi$-complex.
\end{definition}

Both $\Surf(X)$ and $\Irred(X)$ are instances of $\Ess_\Pi(X)$ for appropriate choices of suitable set $\Pi$, as recorded in the following lemma.

\begin{lemma}\label{lem: Surf and Irred are suitable}
Let $X$ be a branched 2-complex.
\begin{enumerate}[(i)]
\item Let $\Sigma$ be the set of graphs homeomorphic to the circle. Then $\Surf(X)=\Ess_\Sigma(X)$.
\item Let $\Xi$ be the set of connected, finite core graphs with at least two vertices and without cut vertices. Then $\Irred(X)=\Ess_\Xi(X)$.
\end{enumerate}
\end{lemma}
\begin{proof}
This is immediate from the definitions of $\Surf(X)$ and $\Irred(X)$.
\end{proof}

The idea of our main theorem, Theorem \ref{thm: General rationality theorem}, is to encode the elements of $\Ess_\Pi(X)$ as the integer points of a suitable linear system. In fact, the linear system encodes not just $\Ess_\Pi(X)$, but elements of $\Ess_\Pi(X)$ equipped with origamis, which in turn certify that the relevant map is indeed essential. To use origamis to certify that a map of branched 2-complexes is essential, we use the following definition, which adapts the context of origamis from graphs to complexes.

\begin{definition}\label{def: Origamis and quotient 2-complexes}
Let $Y$ be a branched 2-complex. An \emph{origami} $\Omega$ on $Y$ is an origami on the 1-skeleton $\Gamma_Y$. The \emph{quotient branched 2-complex} $Y/\Omega$ is now defined naturally as follows:
\begin{enumerate}[(i)]
\item the 1-skeleton  is the quotient graph $\Gamma_Y/\Omega$;
\item the faces are the same as the faces of $Y$, attached using the composition $S_Y\to \Gamma_Y\to \Gamma_Y/\Omega$.
\end{enumerate}
There is a natural quotient map $q:Y\to Y/\Omega$.  Item (ii) of Definition \ref{def: Essential map} is satisfied by construction, so  $q$ is essential if the origami $\Omega$ is essential, by Theorem \ref{thm: Essential origamis}.
\end{definition}

As in the case of graphs, we want to use origamis to certify that a given map is essential, using the notion of a \emph{compatible} origami by extending Definition \ref{def: Compatible origami}.

\begin{definition}\label{def: Origamis compatible with maps of complexes}
Let $\phi:Y\to X$ be a morphism of branched 2-complexes. An origami $\Omega$ on $Y$ is said to be \emph{compatible with $\phi$} if $\phi$ factors through the quotient map $Y\to Y/\Omega$, and the factor map $Y/\Omega\to X$ is a branched immersion. 
\end{definition}

By Theorem \ref{thm: Essential origamis}, a morphism is essential if and only if there is a compatible essential origami.  Just as in the case of graphs, it is easy to certify that an origami is compatible with a given morphism.

\begin{lemma}\label{lem: Certifying compatibility for complexes}
A morphism of branched 2-complexes $\phi:Y\to X$  is compatible with an origami $\Omega$ if and only if items (i)-(iii) of Remark \ref{rem: Certifying compatibility} are satisfied, together with the following additional condition:
\begin{enumerate}[(i)]\setcounter{enumi}{3}
\item  if $u_1,u_2$ are vertices of $S_Y$ with $\phi(u_1)=\phi(u_2)$ and $w_Y(u_1),w_Y(u_2)$ are in the same component of $V_\Omega$ then $u_1=u_2$.
\end{enumerate}
\end{lemma}
\begin{proof}
The conditions of Remark \ref{rem: Certifying compatibility} characterise when $\phi$ factors through the quotient map $Y\to Y/\Omega$ and the factor map  $Y/\Omega\to X$ is an immersion on the 1-skeleton. Item (iv) above is a restatement of the condition that 
\[
S_Y\to (\Gamma_Y/\Omega)\times_{\Gamma_X} S_X
\]
should be an embedding. Therefore, by Lemma \ref{lem: Immersions and fibre products}, $\phi$ is compatible with $\Omega$ if and only if the conditions of Remark \ref{rem: Certifying compatibility}, together with item (iv), are satisfied.
\end{proof}

Since we will use essential origamis to certify that our morphisms are essential, we next refine $\Ess_\Pi(X)$ to include this extra information.

\begin{definition}\label{def: Orig_Pi}
Fix a branched 2-complex $X$. The notation $\Orig_\Pi(X)$ denotes the set of morphisms $\phi:Y\to X$, where $Y$ is a finite $\Pi$-complex equipped with an essential origami compatible with $\phi$. There is a natural notion of isomorphism on $\Orig_\Pi(X)$: two morphisms $\phi_1:Y_1\to X$ and $\phi_2:Y_2\to X$, equipped with compatible essential origamis, are isomorphic if there is an isomorphism of 2-complexes $\theta:Y_1\to Y_2$ such that $\phi_1=\phi_2\circ\theta$ and such that $e\sim_Oe'$ if and only if $\theta(e)\sim_O\theta(e')$, for each pair of edges $e,e'$ of $Y_1$. We will always consider the elements of $\Orig_\Pi(X)$ up to this notion of isomorphism.
\end{definition}

The relationship between $\Orig_\Pi(X)$ and $\Ess_\Pi(X)$ can be conveniently summarised in the following proposition.

\begin{proposition}\label{prop: Forgetting origamis}
The map $\Orig_\Pi(X)\to\Ess_\Pi(X)$ given by forgetting the origami on $Y$ is surjective and finite-to-one.
\end{proposition}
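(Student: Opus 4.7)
The plan is to deduce surjectivity from the folding lemma for branched 2-complexes (Lemma \ref{lem: Folding 2-complexes}) combined with Proposition \ref{prop: Essential origamis realise homotopy equivalences}, and to deduce the finite-to-one property from the purely combinatorial observation that origamis on a fixed finite graph form a finite set.

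For surjectivity, I would start with a representative essential map $\phi:Y\to X$ and apply Lemma \ref{lem: Folding 2-complexes} to factor $\phi=\bar{\phi}\circ\phi_0$, where $\bar{\phi}:\bar{Y}\to X$ is a branched immersion. Because $\phi$ is essential, $\phi_0:Y\to\bar{Y}$ is an essential equivalence; in particular, $\phi_0:\Gamma_Y\to\Gamma_{\bar{Y}}$ is a homotopy equivalence of finite graphs and $\phi_0:S_Y\to S_{\bar{Y}}$ is an isomorphism. Applying Proposition \ref{prop: Essential origamis realise homotopy equivalences} to the map of 1-skeleta (componentwise, and restricting to cores as needed) yields an essential origami $\Omega$ on $\Gamma_Y$ together with an isomorphism $\Gamma_Y/\Omega\cong\Gamma_{\bar{Y}}$ that identifies the quotient map with $\phi_0|_{\Gamma_Y}$. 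By Definition \ref{def: Origamis and quotient 2-complexes}, $\Omega$ is simultaneously an origami on $Y$, and the identification $Y/\Omega\cong\bar{Y}$ of branched 2-complexes then follows by combining the 1-skeleton isomorphism with the fact that $\phi_0|_{S_Y}$ is an isomorphism.

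The main technical step is to verify that $\Omega$ is compatible with $\phi$ in the sense of Remark \ref{rem: Certifying compatibility for complexes}. Conditions (i)--(iii) concern only the 1-skeleton and are immediate from the construction of $\Omega$ as a realisation of $\phi_0|_{\Gamma_Y}$, together with the fact that $\bar{\phi}$ is a branched immersion. The key step is condition (iv): suppose $u_1,u_2\in V_{S_Y}$ satisfy $\phi(u_1)=\phi(u_2)$ while $w_Y(u_1)$ and $w_Y(u_2)$ lie in the same component of $V_\Omega$. Then $\phi_0(w_Y(u_1))=\phi_0(w_Y(u_2))$, so $w_{\bar{Y}}(\phi_0(u_1))=w_{\bar{Y}}(\phi_0(u_2))$; likewise $\bar{\phi}(\phi_0(u_1))=\phi(u_1)=\phi(u_2)=\bar{\phi}(\phi_0(u_2))$. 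Since $\bar{\phi}$ is a branched immersion, Remark \ref{rem: Immersions and fibre products} embeds $S_{\bar{Y}}$ into the fibre product $\Gamma_{\bar{Y}}\times_{\Gamma_X}S_X$, forcing $\phi_0(u_1)=\phi_0(u_2)$, whence $u_1=u_2$ because $\phi_0|_{S_Y}$ is an isomorphism. I expect this fibre-product step to be the main obstacle, since it is where the global input that $\phi$ is essential is combined with the local content of compatibility.

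For the finite-to-one claim, fix an essential map $\phi:Y\to X$. By Definition \ref{def: Origami}, an origami on $Y$ is specified by an equivalence relation on the finite edge set $E_{\Gamma_Y}$, of which there are only finitely many. Hence there are only finitely many essential origamis on $Y$ compatible with $\phi$, even before quotienting by the automorphism action of Definition \ref{def: Orig_Pi}, so the preimage of $[\phi]\in\Ess_\Pi(X)$ in $\Orig_\Pi(X)$ is finite.
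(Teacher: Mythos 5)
Your argument is correct and follows the same underlying route as the paper, but it is considerably more explicit. The paper's own proof is a one-liner: surjectivity is attributed to Theorem \ref{thm: Essential origamis}, and finite-to-one-ness to the finiteness of the set of origamis on a fixed finite graph. What the paper actually relies on, however, is the unproved assertion made just after Definition \ref{def: Origamis and quotient 2-complexes} that a morphism of branched $2$-complexes is essential if and only if it admits a compatible essential origami -- this is a $2$-complex analogue of Theorem \ref{thm: Essential origamis}, whose statement is purely about graphs and whose notion of compatibility (Definition \ref{def: Compatible origami}) omits the face condition of Remark \ref{rem: Certifying compatibility for complexes}. Your proof supplies exactly this missing content: you apply Lemma \ref{lem: Folding 2-complexes} and Proposition \ref{prop: Essential origamis realise homotopy equivalences} to manufacture the origami on the $1$-skeleton, and then verify condition (iv) of Remark \ref{rem: Certifying compatibility for complexes} directly via the fibre-product characterisation of branched immersions in Remark \ref{rem: Immersions and fibre products} together with $\phi_0$ being an isomorphism on $S_Y$. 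That last step is the genuinely non-formal part and is argued correctly. The finite-to-one half matches the paper verbatim. One small caveat you acknowledge in passing but should be more careful about: Proposition \ref{prop: Essential origamis realise homotopy equivalences} is stated for connected core graphs, whereas $\Gamma_Y$ need be neither connected nor core for an arbitrary suitable $\Pi$; disconnectedness is easily handled componentwise, but the core hypothesis deserves a sentence (e.g.\ observing that for the origamis that arise here one can first fold away any hanging trees, or that the quotient map restricted to a core extends uniquely).
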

\begin{proof}
The forgetful map is well defined and surjective by Theorem \ref{thm: Essential origamis}. It is finite-to-one because there are only finitely many origamis on any finite graph.
\end{proof}

\section{A linear system}\label{sec: A linear system}

Fix a finite branched 2-complex $X$. Our goal is now to define a rational cone $C(\R)$ in some real vector space $\R^N$ such that the integer points $C(\Z)=C(\R)\cap\Z^N$ correspond to the elements of $\Orig_\Pi(X)$. The construction can be summarised in the following theorem.

\begin{theorem}\label{thm: Rational cone}
Let $X$ be a finite branched 2-complex and $\Pi$ a set of connected finite graphs, each with at least one edge.
There is an explicitly defined rational cone $C(\R)$ and a surjection
\[
\Phi:\Orig_\Pi(X)\to C(\Z)\,
\]
such that the preimage of any vector in $C(\Z)$ is a finite set of isomorphism classes. 
\end{theorem}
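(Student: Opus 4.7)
The plan is to encode elements of $\Orig_\Pi(X)$ by counting finitely many local \emph{vertex types}, and to define $C(\R)$ by rational linear matching constraints that these counts must satisfy.

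I would first enumerate the set $\curlyV$ of vertex types. A type $\tau$ records the complete local data at a vertex $u\in V_Y$: the image $v^* = \phi(u) \in V_X$, the link $L = \Lk_Y(u) \in \Pi$, the link morphism $\ell = d\phi_u\colon L \to \Lk_X(v^*)$, and the restriction to $\Lk_{\Gamma_Y}(u)$ of the open equivalence relation $\sim_O$ (a partition of $V_L$). The key finiteness observation is that, for $(Y,\Omega,\phi) \in \Orig_\Pi(X)$, the factor map $Y/\Omega \to X$ is a branched immersion, so the composite $L \to \Lk_{Y/\Omega}(q(u)) \hookrightarrow \Lk_X(v^*)$ is injective on edges. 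Since each $L\in \Pi$ is connected with at least one edge (so has no isolated vertices), $|V_L| \le 2|E_L| \le 2|E_{\Lk_X(v^*)}|$. Hence $L$, $\ell$, and the partition are all bounded by data of $X$, and $\curlyV$ is finite.

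I would then define $C(\R)\subseteq \R^{\curlyV}$ as the intersection of the non-negative orthant with explicit rational matching equations: for each pair $\{e^*,\opp{e^*}\}\subset E_X$, the weighted sum of $e^*$-labelled link-vertices across all types equals the sum of $\opp{e^*}$-labelled ones, so 1-skeleton edges can be glued; an analogous family on $S_X$ lets the face-circles close up; and further equations, keyed to local open-class types, require that the partitions $P_O^{(u)}$ concatenate across edge gluings into a coherent global $\sim_O$ (this may require enriching the types $\tau$ with labels identifying each local open part with its eventual global open-class, drawn from a finite set determined by $X$). All constraints are rational and linear, so $C(\R)$ is rational, and $\Phi(Y,\Omega,\phi) = (x_\tau)$ with $x_\tau = \#\{u\in V_Y : u \text{ has type }\tau\}$ takes values in $C(\Z)$.

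Given $(x_\tau)\in C(\Z)$, I would establish surjectivity by building an element of $\Orig_\Pi(X)$ constructively: create $x_\tau$ abstract vertex pieces of each type and use the matching equations to pair their link-vertices, thereby assembling a $\Pi$-complex $Y$, a compatible map $\phi\colon Y\to X$, and a candidate origami $\Omega$ arising from the local partitions. Finiteness of the fibres is then automatic, since $(x_\tau)$ determines the total multisets of vertex, edge, and face pieces and only finitely many isomorphism classes of matchings exist. The principal obstacle will be to ensure that the candidate origami is genuinely essential, since the forest conditions on $V_\Omega$ and $E_\Omega$ are global rather than local; I would address this either via the enrichment of $\curlyV$ sketched above, so that the matching equations directly force tree-like structure on both auxiliary graphs, or by invoking Lemma \ref{lem: Folding origamis} to modify any non-essential gluing into an essential one while preserving $(x_\tau)$. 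Carrying out whichever of these approaches works is the technical heart of the argument.
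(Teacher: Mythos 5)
Your overall strategy -- counting finitely many local types and cutting out a rational cone by matching equations -- is the same linear-programming framework as the paper's, but your choice of local unit is too fine, and the step you defer as ``the technical heart'' is exactly where the argument fails as written. Your types record the data at a single vertex $u$ of $Y$, so neither essentiality of the origami (the tree/forest conditions on $V_\Omega$ and $E_\Omega$) nor the requirement that the factor map $Y/\Omega\to X$ be a branched immersion is visible in the vector $(x_\tau)$: these are conditions on whole components of $V_\Omega$, i.e.\ on the full preimage of a vertex of $\bar{Y}=Y/\Omega$, and linear equations on per-vertex counts cannot force acyclicity of the auxiliary graphs. Moreover, since the ``further equations, keyed to local open-class types'' are left unspecified, the cone is not actually explicitly defined. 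Neither proposed repair closes the gap: Lemma \ref{lem: Folding origamis} \emph{presupposes} an essential origami and folds $Y$ itself (changing the vertex counts), so it cannot be used to convert a non-essential gluing into an essential one; and the enrichment ``by labels from a finite set determined by $X$'' is not substantiated -- you neither specify the label set nor show that counting equations can force the forest conditions or the injectivity of links of $Y/\Omega$ in links of $X$.

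The paper resolves precisely this issue by taking a coarser unit: a \emph{vertex block} (Definition \ref{def: Vertex blocks}) packages \emph{all} the vertices of $Y$ lying over one vertex of the folded complex $\bar{Y}$ -- equivalently, one component of $V_\Omega$ -- together with the restricted relations $\sim_O,\sim_C$, the injection $\bar{L}(\beta)\into\Lk_X(x_\beta)$, and the tree/forest requirement built into the definition of a block. Finiteness of the set of blocks is then the analogue of your finiteness observation, using that the folded map is a branched immersion (so $\bar{L}(\beta)$ injects into a link of $X$) and that $L(\beta)\to\bar{L}(\beta)$ is edge-bijective; but now essentiality, local consistency, and compatibility are local conditions on the block type. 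In the reconstruction from an integer point (Definition \ref{def: Induced vertex block} and the proof of Theorem \ref{thm: Rational cone}), the open and closed relations are only ever defined within blocks, so every component of $V_\Omega$ and $E_\Omega$ of the glued-up complex lies inside a single block, and the resulting origami is essential and compatible \emph{by construction}; the only matching constraints needed are the edge-block gluing equations. To salvage your approach, the enrichment you gesture at would have to record, in effect, the entire block a vertex belongs to -- which is the paper's construction.
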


We will usually abuse notation and let $Y$ denote an element of $\Orig_\Pi(X)$, i.e.\ a morphism $Y\to X$ equipped with an essential origami $\Omega$.

The rest of this section is devoted to the construction of the cone $C(\mathbb{R})$ and the proof of Theorem \ref{thm: Rational cone}.

\subsection{Vertex and edge blocks}

The idea is to decompose an element of $\Orig_\Pi(X)$ into \emph{vertex blocks}, of which there should only be finitely many combinatorial types. The map $\Phi$ then counts the number of vertex blocks. We next give the definition of a vertex block, which is extremely involved. Intuitively, a vertex block encodes the vertex space of the graph of graphs associated to an origami. The construction is phrased in terms of the link graphs $\Lk_X(x)$. Since the path components of a link correspond to vertices of the complex, the $\pi_0$ functor will play a role, and we will use the notation $\pi_0(v)$ to denote the path component of a vertex $v$.

\begin{definition}[Vertex block]\label{def: Vertex blocks}
A \emph{vertex block} over $X$, denoted by $\beta$, consists of the following data:
\begin{enumerate}[(a)]
\item a vertex $x_\beta$ of $X$;
\item  two finite graphs $L(\beta)$ and $\bar{L}(\beta)$, together with morphisms $L(\beta)\to \bar{L}(\beta)\to \Lk_X(x_\beta)$; and
\item  equivalence relations $\sim_O$ and $\sim_C$ on the vertices of $L(\beta)$.
\end{enumerate}

The equivalence relations $\sim_O$ and $\sim_C$ together define two graphs -- a \emph{vertex graph} $V_\beta$ and an \emph{edge graph} $E_\beta$ -- as in the definition of an origami.

The vertices of the edge graph $E_\beta$ are of two kinds: the $\sim_O$-equivalence classes $[v]_O$ and the $\sim_C$-equivalence classes $[v]_C$; the edges of $E_\beta$ are the vertices of $L(\beta)$, and each edge $v$ joins $[v]_O$ to $[v]_C$.

The vertex graph $V_\beta$ also has two kinds of vertices: the path components of $L(\beta)$ and the $\sim_C$-equivalence classes $[v]_C$. The edges are again the vertices of $L(\beta)$, with an edge $v$ joining a path component $\pi_0(v)$ to the equivalence class $[v]_C$.

The data of the vertex block is then subject to the following conditions:
\begin{enumerate}[(i)]
\item $L(\beta)$ is a disjoint union of graphs in $\Pi$;
\item $L(\beta)\to \bar{L}(\beta)$ is bijective on edges;
\item $\bar{L}(\beta)\to \Lk_X(x_\beta)$ is injective;
\item the vertex graph $V_\beta$ is a tree and the edge graph $E_\beta$ is a forest;
\item if three vertices $v_1,v_2,v$ of $L(\beta)$ satisfy $v_1,v_2\in[v]_O$ then $v$ does not separate $\pi_0(v_1)$ from $\pi_0(v_2)$ in $V_\beta$;
\item if two vertices $v_1$ and $v_2$ of $L(\beta)$ have the same image in $\Lk_X(x_\beta)$ then they are in the same path component of $E_\beta$.
\end{enumerate}
\end{definition}

Let $\vblocks=\vblocks(X)$ be the set of all vertex blocks up to the natural combinatorial notion of isomorphism.   Although the definition of $\vblocks$ is extremely complicated, it is not hard to see that $\vblocks$ is finite, as long as $X$ is also finite.

\begin{remark}\label{rem: Finitely many blocks}
If $X$ is a finite branched 2-complex then $\vblocks(X)$ is finite. Indeed, since $X$ is finite there are only finitely many links $\Lk_X(x_\beta)$, each of which is a finite graph. Since $\bar{L}(\beta)$ injects into $\Lk_X(x_\beta)$, it follows that there are only finitely many combinatorial types of maps $\bar{L}(\beta)\to \Lk_X(x_\beta)$.  Being a disjoint union of graphs in $\Pi$, the graph $L(\beta)$ has no isolated vertices, so the number of vertices is at most twice the number of edges, which in turn is bounded since $L(\beta)\to \bar{L}(\beta)$ is bijective on edges. Therefore $L(\beta)$ is of bounded size, and so there are only finitely many maps $L(\beta)\to \bar{L}(\beta)$ up to combinatorial equivalence. Finally, because $L(\beta)$ is a finite graph, there are only finitely many equivalence relations $\sim_O$ and $\sim_C$ on the vertices of $L(\beta)$.
\end{remark}

The vertex blocks in $\vblocks$ are the variables of our linear-programming problem. To this end, we will work in the finite-dimensional vector space $\R^\vblocks$, in which a typical vector is denoted by $(t_\beta)=(t_\beta)_{\beta\in\vblocks}$.  The idea is that an essential map $Y\to X$ (equipped with an origami) is built out of blocks, and the vector $(t_\beta)$ counts the number of such blocks of isomorphism type $\beta$. This idea can be made precise by the notion of an \emph{induced vertex block}.

\begin{definition}[Induced vertex block]\label{def: Induced vertex block}
Let  $Y$ be a $\Pi$-complex, let $\phi:Y\to X$ be a morphism and let $\Omega$ be an essential origami on $Y$, compatible with $\phi$.  That is, the map $\phi$ factors as
\[
Y\stackrel{\phi_0}{\to} \bar{Y}\stackrel{\bar{\phi}}{\to} X
\]
where $\bar{Y}=Y/\Omega$ and $\phi_0$ is the quotient map. For any vertex $\bar{u}$ of $\bar{Y}$, the \emph{induced vertex block} $\beta=\beta(\bar{u})$ is defined as follows:
\begin{enumerate}[(a)]
\item $x_\beta:=\bar{\phi}(\bar{u})$ ;
\item  $\bar{L}(\beta):=\Lk_{\bar{Y}}(\bar{u})$, while $L(\beta):=\coprod_{u\in\phi_0^{-1}(\bar{u})}\Lk_Y(u)$, and the map $L(\beta)\to\bar{L}(\beta)$ is the coproduct of the induced maps $(d\phi_0)_u$ on links;
\item  the equivalence relations $\sim_O$ and $\sim_C$ on the vertices of $L(\beta)$ are restricted from $\Omega$ in the natural way.
\end{enumerate}
We need to check that the conditions of Definition \ref{def: Vertex blocks} are satisfied:
\begin{enumerate}[(i)]
\item the fact that $Y$ is a $\Pi$-complex implies that every path component of $L(\beta)$ is in $\Pi$;
\item $L(\beta)\to \bar{L}(\beta)$ is bijective on edges by the definition of the quotient complex $Y/\Omega$;
\item $\bar{\phi}$ is a branched immersion, and so the induced map on links $d\bar{\phi}_{\bar{u}}:\bar{L}(\beta)\to\Lk_X(x_\beta)$ is injective;
\item the vertex graph $V_\beta$ is a component of  $V_\Omega$ and the edge graph $E_\beta$ is a union of components of $E_\Omega$, so $V_\beta$ is a tree and $E_\beta$ is a forest because $\Omega$ is essential;
\item the local consistency of $\Omega$ is equivalent to item (v) of Definition \ref{def: Vertex blocks};
\item the final condition, that two vertices $v_1$ and $v_2$ of $L(\beta)$ with the same image in $\Lk_X(x_\beta)$ are in the same path component of $E_\beta$, follows immediately from the fact that $\Omega$ is compatible with $\phi$, by item (iii) of Remark \ref{rem: Certifying compatibility}.
\end{enumerate}
\end{definition}

The equations that cut out the subspace $V$ from the vector space $\R^\vblocks$ make use of \emph{edge blocks}.

\begin{definition}[Edge block]\label{def: Edge blocks}
An \emph{edge block} over $X$, denoted by $\gamma$, consists of the following data:
\begin{enumerate}[(a)]
\item an edge $e_\gamma$ of $X$;
\item a (possibly empty) finite subset $L(\gamma)$ of $\Lk_X(e_\gamma)$;
\item a partition $V(\gamma)$ of $L(\gamma)$; and
\item a pair of equivalence relations $\sim_O$ and $\sim_C$ on $V(\gamma)$.
\end{enumerate}
\end{definition}

The data of an edge block would enable us to define an edge graph, analogously to the edge graphs that appear in origamis and vertex blocks, and we could impose the condition that this edge graph should be a tree. However, we will not need to do that here. Let $\eblocks\equiv\eblocks(X)$ denote the set of edge blocks up to the natural notion of combinatorial isomorphism. Similarly to the case of vertex blocks, it is not hard to see that $\eblocks$ is finite when $X$ is, although again we will not need that here.

Edge blocks encode the part of a vertex block that lies above an edge. The next definition makes this precise.

\begin{definition}[Induced edge block]\label{def: Induced edge block}
For each vertex block $\beta\in\vblocks$ and each edge $e$ such that $\iota(e)=x_\beta$, the \emph{induced edge block} $\gamma_\beta(e)$ is defined in the following way.
\begin{enumerate}[(a)]
\item The edge $e_{\gamma_\beta(e)}$ is defined to be $e$.
\item The set $L(\gamma_\beta(e))$ is the intersection of the image of $L(\beta)$ with $\Lk_X(e)$ in $\Lk_X(x_\beta)$.
\item The partition $V(\gamma_\beta(e))$ on $L(\gamma_\beta(e))$ is induced by the links of vertices in $L(\beta)$, so each element is of the form $L(\gamma_\beta(e))\cap\Lk_{L(\beta)}(v)$, for $v$ a unique vertex of $L(\beta)$.
\item The equivalence relation $\sim_O$ and $\sim_C$ on $V(\gamma_\beta(e))$ are defined as follows. The open equivalence relation is now pulled back from $\beta$, so 
\[
L(\gamma_\beta(e))\cap\Lk_{L(\beta)}(v_1)\sim_O L(\gamma_\beta(e))\cap\Lk_{L(\beta)}(v_2)
\]
if and only if $v_1\sim_O v_2$ in $L(\beta)$. The closed equivalence relation is pulled back in the same way, so
\[
L(\gamma_\beta(e))\cap\Lk_{L(\beta)}(v_1)\sim_C L(\gamma_\beta(e))\cap\Lk_{L(\beta)}(v_2)
\]
if and only if $v_1\sim_C v_2$ in $L(\beta)$.
\end{enumerate}
\end{definition}

Finally, to define the cone $C(\mathbb{R})$, we use the fact that the orientation-reversing involution on edges extends to edge blocks defined over them.

\begin{definition}[Opposite edge block]\label{def: Opposite edge blocks}
Let $\gamma\in\eblocks$ be an edge block. The \emph{opposite edge block} $\opp{\gamma}$ is defined as follows:
\begin{enumerate}[(a)]
\item $e_{\opp{\gamma}}:=\opp{e}_\gamma$;
\item $L(\opp{\gamma})$ is the image of $L(\gamma)$ under the canonical orientation-reversing bijection $\Lk_X(e)\to \Lk_X(\opp{e})$;
\item  $V(\opp{\gamma})$ is the induced image of $V(\gamma)$ under the orientation-reversing bijection $L(\opp{\gamma})\to L(\gamma)$, and  indeed we write $\opp{v}$ for the induced image of a subset $v$ of $L(\gamma)$.
\item finally, the roles of $\sim_O$ and $\sim_C$ are reversed, so $\opp{u}\sim_O\opp{v}$ in $V(\opp{\gamma})$ if and only if $u\sim_C v$ in $V(\gamma)$, and likewise $\opp{u}\sim_C\opp{v}$ in $V(\opp{\gamma})$ if and only if $u\sim_O v$ in $V(\gamma)$.
\end{enumerate}
\end{definition}

\subsection{The rational cone}

The rational cone $C(\R)$ can be constructed in a way that is now fairly standard; the reader may like to compare the construction given here with the constructions of \cite{calegari_stable_2009,louder_uniform_2024,wilton_essential_2018}.  We work in  the real vector spaces $\R^{\vblocks}$.  For each edge block $\gamma_0$ over an edge $e_0$ of $X$, the corresponding \emph{gluing equation} constrains the vector $(t_\beta)\in \R^{\vblocks}$ by insisting that
\[
\sum_{\gamma_\beta(e_0)=\gamma_0} t_\beta = \sum_{\gamma_\beta(\opp{e}_0)=\opp{\gamma}_0} t_{\beta}\,,
\]
and $C(\R)$ is the set of vectors in the positive orthant $\R_{\geq 0}^{\vblocks}$ that satisfy all the gluing equations. Note that the gluing equations are linear with integer coefficients, so $C(\mathbb{R})$ is indeed a rational cone. 

The map $\Phi$  from $\Orig_\Pi(X)$ to the set of integer points $C(\Z)$ can be easily defined using induced vertex blocks.  A typical element of $\Orig_\Pi(X)$ consists of a morphism $\phi:Y\to X$ and a compatible essential origami $\Omega$. Now $\Phi$  counts the number of vertices of the folded representative $\bar{Y}$ for which the induced vertex block is of each isomorphism type. More precisely, $\Phi$ sends the given element to the vector $(t_\beta)$, where
\[
t_\beta=\#\{\bar{u}\in\bar{Y}\mid \beta(\bar{u})=\beta\}\,.
\]
The proof of Theorem \ref{thm: Rational cone} is now tedious but routine.

\begin{proof}[Proof of Theorem \ref{thm: Rational cone}]
We need to check that the image of $\Phi$ satisfies the gluing equations, that $\Phi$ is surjective, and that each vector has only finitely many isomorphism types in its preimage under $\Phi$.

To see that $\Phi$ satisfies the gluing equations, consider an edge $e$ of $\bar{Y}$, let $\bar{u}=\iota(e)$ and let $\bar{v}=\tau(e)$. By construction, the opposite block of $\gamma_{\beta(\bar{u})}(e)$ is precisely $\gamma_{\beta(\bar{v})}(\opp{e})$. Thus, there is a bijection between the set of vertices of $\bar{Y}$ whose vertex block induces a given edge block $\gamma$ and the set  of vertices of $\bar{Y}$ whose vertex block induces the opposite edge block $\opp{\gamma}$. From this, it follows that the image of $\Phi$ satisfies the gluing equations.

To see that $\Phi$ is surjective, consider an arbitrary vector $(t_\beta)$ in $C(\Z)$; we need to construct a morphism $\phi:Y\to X$ and a compatible essential origami $\Omega$ that $\Phi$ sends to $(t_\beta)$. To this end, take $t_\beta$ copies of each vertex block $\beta\in \vblocks$. Since $(t_\beta)$ satisfies the gluing equations, for each pair of edges $\{e,\opp{e}\}$ of $X$ and for each edge block $\gamma$ over $e$, we may choose a bijection between the vertex blocks inducing $\gamma$ over $e$ and the vertex blocks inducing $\opp{\gamma}$ over $\opp{e}$.  For each such pair of vertex blocks $\beta$ and $\beta'$, choose an isomorphism between $\gamma_\beta(e)$ and $\gamma_{\beta'}(\opp{e})$.  This provides the data to construct $Y$ uniquely, as follows: the disjoint union of the links of vertices in $Y$ is the union of the graphs $L(\beta)$ across our chosen set of vertex blocks, and the chosen isomorphisms provide bijections between the link of each edge of $Y$ and the link of its opposite. By Remark \ref{rem: Extra structure on links}, this determines $Y$. The data of the blocks determines $\phi:Y\to X$, and the equivalence relation $\sim_O$ on the vertex blocks defines an origami $\Omega$ compatible with $\phi$. Note that $\Omega$ is essential by construction. This completes the proof that $\Phi$ is surjective.

Finally, preimages under $\Phi$ consist of finitely many isomorphism types because, in the above construction, the isomorphism type was determined by the choices of finitely many possible bijections and isomorphisms.
\end{proof}

\section{Curvature invariants}\label{sec: Curvature invariants}

In the next section we give the true definitions of our curvature invariants. In the subsequent section, we observe that these can be expressed in terms of linear functions on the cone $C(\R)$.

\subsection{Curvature invariants}

We start with a natural extension of the notion of Euler characteristic to the context of branched complexes.

\begin{definition}[Total and average curvatures]\label{def: Total and average curvatures}
The \emph{area} of a branched 2-complex $X$ is the sum of the areas of its faces. The \emph{total curvature} of a 2-complex $X$ is the quantity
\[
\tau(X):=\Area(X)+\chi(\Gamma_X)
\]
where $\chi(\Gamma_X)$ denotes the usual Euler characteristic of the 1-skeleton of $X$, i.e\ the the number of vertices minus the number of edges. 

The \emph{average curvature} of $X$ is the quantity
\[
\kappa(X):=\frac{\tau(X)}{\Area(X)}.
\]
\end{definition}

In particular, if $X$ is standard then $\tau(X)$ is the usual Euler characteristic. The definition of $\kappa(X)$ is motivated by the Gauss--Bonnet theorem, which implies that for a Riemannian metric on a closed surface $S$, the average Gaussian curvature is $2\pi\chi(S)/\Area(S)$.

As explained in the introduction, the invariants $\rho_\pm(X)$ are defined by extremising $\kappa$ over $\Irred(X)$. 

\begin{definition}[Irreducible curvature bounds]\label{def: Irreducible curvatures}
For any branched 2-complex $X$,
\[
\rho_+(X):=\sup_{Y\in\Irred(X)} \kappa(Y)\,,
\]
and
\[
\rho_-(X):=\inf_{Y\in\Irred(X)} \kappa(Y)\,.
\]
\end{definition}

The invariants $\sigma_\pm(X)$ are defined similarly, with $\Surf(X)$ replacing $\Irred(X)$.

\begin{definition}[Surface  curvature bounds]\label{def: Surface curvatures}
For any branched 2-complex $X$,
\[
\sigma_+(X):=\sup_{Y\in\Surf(X)} \kappa(Y)\,,
\]
and
\[
\sigma_-(X):=\inf_{Y\in\Surf(X)} \kappa(Y)\,.
\]
\end{definition}

\subsection{Linear functions}

Throughout this section, $X$ is a finite branched 2-complex, $\Pi$ is a suitable collection of graphs, $C(\R)$ is the cone provided by Theorem \ref{thm: Rational cone} and $\Phi$ is the surjection provided by the same theorem. The final ingredient of the main theorem is the observation that both the total curvature and the area are linear functions on $C(\mathbb{R})$. We start with area.

\begin{lemma}\label{lem: Area functional}
There is a linear function $\Area$ on $C(\mathbb{R})$ such that
\[
\Area\circ\Phi(Y)=\Area(Y)
\]
for any $Y\in \Orig_\Pi(X)$.
\end{lemma}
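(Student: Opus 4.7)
My plan is to distribute the area of each face of $X$ uniformly among its corners, then aggregate these weights using the combinatorial data recorded by each vertex block. For a face $f$ of $X$ (a circle in $S_X$), write $n(f)$ for the number of its vertices. By the bijection of Remark \ref{rem: Extra structure on links}, each such vertex corresponds to a unique geometric edge of some link $\Lk_X(x)$, and the face $f_e$ containing this vertex is determined by the edge $e$. To each geometric edge $e$ of $\Lk_X(x)$ I assign the weight
\[
\omega(e):=\Area(f_e)/n(f_e).
\]

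For a vertex block $\beta\in\vblocks$, conditions (ii) and (iii) of Definition \ref{def: Vertex blocks} guarantee that every geometric edge of $L(\beta)$ has a well-defined image in $\Lk_X(x_\beta)$, and hence a well-defined weight. Setting $A_\beta:=\sum_e \omega(\bar e)$, where $e$ ranges over geometric edges of $L(\beta)$ with images $\bar e$, I obtain the candidate linear functional
\[
\Area((t_\beta)):=\sum_\beta t_\beta A_\beta\,,
\]
which is plainly linear on $\R^{\vblocks}$.

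To verify the identity $\Area\circ\Phi(Y)=\Area(Y)$, I will unwind the definitions. Writing $\Phi(Y)=(t_\beta)$, the left-hand side equals $\sum_{\bar u\in V_{\bar Y}}A_{\beta(\bar u)}$. Since $L(\beta(\bar u))=\coprod_{u\in\phi_0^{-1}(\bar u)}\Lk_Y(u)$ by Definition \ref{def: Induced vertex block}, this rewrites as a double sum $\sum_{u\in V_Y}\sum_{e}\omega(\phi(e))$, where the inner sum is over geometric edges $e$ of $\Lk_Y(u)$. Re-indexing these corners by the face of $Y$ to which they belong, each face $\tilde f\in F_Y$ contributes exactly $m_\phi(\tilde f)\cdot n(\phi(\tilde f))$ corners, each weighted $\Area(\phi(\tilde f))/n(\phi(\tilde f))$; their total contribution is $m_\phi(\tilde f)\cdot\Area(\phi(\tilde f))=\Area(\tilde f)$ per face, and summing over all faces of $Y$ gives $\Area(Y)$, as required.

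I do not anticipate any serious difficulty: the argument is pure combinatorial bookkeeping, and the only real \emph{idea} is to recognise that geometric edges of $L(\beta)$ record exactly the corners at vertices of $\bar Y$ of type $\beta$, and that the multiplicity factor in $\Area(\tilde f)=m_\phi(\tilde f)\Area(\phi(\tilde f))$ is accounted for by the covering degree of $\phi$ on the boundary circle of $\tilde f$. The substantive work has already been done in setting up the cone $C(\R)$ in Section \ref{sec: A linear system}.
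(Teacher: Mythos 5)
Your proof is correct and takes essentially the same approach as the paper: distribute each face's area uniformly over its boundary combinatorics, pull back through $L(\beta)\to\Lk_X(x_\beta)$ to weight each vertex block, and then re-index the resulting double sum by faces of $Y$, where the multiplicity $m_\phi(\tilde f)$ combines with $\Area(\tilde f)=m_\phi(\tilde f)\Area(\phi(\tilde f))$. The only cosmetic difference is that you count corners (geometric edges of link graphs, with weight $\Area(f)/n(f)$) while the paper counts oriented edges of $L(\beta)$ (with weight $\Area(f)/l(f)$ where $l(f)=2n(f)$); these bookkeeping schemes are equivalent.
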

\begin{proof}
For each face $f$ of $X$, let $l(f)$ be the number of oriented edges in $f$. (This is exactly twice the length of $f$ thought of as a cyclic graph, since in a Serre graph each edge appears with both orientations.) Each edge $e$ of $L(\beta)$, where $\beta$ is a vertex block, is equipped with a map to $\Lk_X(x_\beta)$, sending $e$ into some face $f(e)$ of $X$. Define the linear map $\Area$ on the basis vector $\delta_\beta$ corresponding to $\beta$ by
\[
\Area(\delta_\beta)=\sum_{e\in E_{L(\beta)}} \Area(f(e))/l(f(e))\,.
\]
Now suppose that $\Phi(Y)=(t_\beta)$.  Then
\[
\Area\circ\Phi(Y)=\sum_{\beta\in\vblocks}t_\beta\sum_{e\in E_{L(\beta)}} \Area(f(e))/l(f(e))\,.
\]
By construction, the edge set of $S_Y$ consists of exactly $t_\beta$ copies of each edge of $L(\beta)$, and so this sum can be rearranged as
\[
\Area\circ\Phi(Y)=\sum_{e\in E_{S_Y}} \Area(f(e))/l(f(e))\,.
\]
For each face $f$ of $X$, the number of edges of $S_Y$ mapping into $f$ is exactly $\sum_{\phi(f')=f} m_{\phi}(f')l(f)$. Therefore, this sum becomes
\begin{eqnarray*}
\Area\circ\Phi(Y)&=&\sum_{f\in F_X}\sum_{\phi(f')=f} m_{\phi}(f')l(f).\Area(f)/l(f)\\
&=&\sum_{f\in F_X}\sum_{\phi(f')=f} m_{\phi}(f').\Area(f)\\
&=&\sum_{f'\in F_Y}\Area(f')\\
&=&\Area(Y)
\end{eqnarray*}
as claimed.
\end{proof}

The next lemma makes a similar observation for the Euler characteristic of the 1-skeleton $\Gamma_Y$.

\begin{lemma}\label{lem: Euler characteristic functional}
There is a linear function $\chi$ on $C(\mathbb{R})$ such that
\[
\chi\circ\Phi(Y)=\chi(\Gamma_Y)
\]
for any $Y\in \Orig_\Pi(X)$.
\end{lemma}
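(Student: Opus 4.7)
The plan is first to pass from $\chi(\Gamma_Y)$ to $\chi(\Gamma_{\bar Y})$, where $\bar Y = Y/\Omega$, and then to express the latter as a sum over vertex-blocks. This mirrors the strategy of Lemma \ref{lem: Area functional}, but now uses the link-data of the blocks rather than their face-data.

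The first step would be to show that the quotient map of 1-skeleta $q\colon \Gamma_Y \to \Gamma_{\bar Y}$ is a homotopy equivalence on each connected component. By Theorem \ref{thm: Essential origamis give injective maps of groups}, the essentiality of $\Omega$ implies that $q$ is $\pi_1$-injective. Inspection of Definition \ref{def: Origami} yields $\pi_1$-surjectivity as well: every closed vertex $[e]_C$ of $V_\Omega$ is an endpoint of the edge $e$, so each component of $V_\Omega$ meets $V_\Delta$; likewise every component of $E_\Omega$ contains an edge of $E_\Delta$, since the vertices $[e]_O$ and $[e]_C$ are both endpoints of $e$. Hence $q$ is surjective on both vertices and edges, and so $\pi_1$-surjective. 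Since graphs are aspherical, $q$ is a homotopy equivalence on each component, and therefore $\chi(\Gamma_Y) = \chi(\Gamma_{\bar Y})$. This is the main place where any non-routine work occurs, though even here everything is immediate once the definitions are unpacked.

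The second step is a routine count. The number of vertices of $\bar Y$ is $\sum_\beta t_\beta$. At each vertex $\bar u$ of type $\beta$, the set of oriented edges of $\Gamma_{\bar Y}$ with initial vertex $\bar u$ is in canonical bijection with $V_{\bar L(\beta)}$, because $\bar L(\beta) = \Lk_{\bar Y}(\bar u)$ by Definition \ref{def: Induced vertex block}. Each geometric edge of $\bar Y$ contributes exactly two oriented edges across its (possibly coincident) incidences, so the number of geometric edges equals $\tfrac{1}{2}\sum_\beta t_\beta\,|V_{\bar L(\beta)}|$. One then defines
\[
\chi(e_\beta) := 1 - \tfrac{1}{2}\,|V_{\bar L(\beta)}|
\]
and extends linearly to all of $\R^{\vblocks}$ to conclude the proof.
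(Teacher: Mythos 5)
Your second step is fine and yields a valid linear functional (one that agrees with the paper's choice $\chi(\delta_\beta)=\#\pi_0(L(\beta))-\#V_{L(\beta)}/2$ on $C(\Z)$, although the two differ as functionals on all of $\R^{\vblocks}$). The paper sidesteps your first step entirely: it computes $\#V_Y$ and $\#E_Y$ directly from the decomposition $\coprod_{u\in V_Y}\Lk_Y(u)\cong\coprod_\beta t_\beta\,L(\beta)$, using the suitability of $\Pi$ to see that the path components of $L(\beta)$ are exactly the links $\Lk_Y(u)$ for $u\in\phi_0^{-1}(\bar{u})$, so that $\#\pi_0(L(\beta))$ and $\#V_{L(\beta)}$ count vertices and oriented edges of $Y$ itself. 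This gives $\chi(\Gamma_Y)$ at once, with no reference to $\bar{Y}$ and hence no need for any statement about the quotient map $q$.

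Your first step, as written, does not hold up. You infer $\pi_1$-surjectivity of $q$ from surjectivity on vertices and edges; but surjectivity of a graph morphism does not give $\pi_1$-surjectivity --- any nontrivial covering map is a surjective immersion that fails to be $\pi_1$-surjective. This is not a vacuous worry here: the quotient map of an essential origami really can be a nontrivial covering. Take $\Delta$ a $4$-cycle with directed edges $e_i\colon v_i\to v_{i+1}$ (indices mod $4$), and define $\sim_O$ by $\opp{e}_1\sim_O\opp{e}_3$ and $\opp{e}_2\sim_O\opp{e}_4$, with no further identifications. One checks that $V_\Omega$ is a forest with two components (each a path on five vertices), $E_\Omega$ is a forest with four components, and non-singularity together with global and local consistency hold, so $\Omega$ is an essential origami; yet $q\colon\Delta\to\Delta/\Omega$ is precisely the degree-$2$ covering of a $2$-cycle --- $\pi_1$-injective, as Theorem~\ref{thm: Essential origamis give injective maps of groups} guarantees, but not $\pi_1$-surjective, so not a homotopy equivalence, and the appeal to asphericity of graphs does not close the argument. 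The equality $\chi(\Gamma_Y)=\chi(\Gamma_{\bar{Y}})$ is in fact true, but its correct justification is combinatorial rather than homotopy-theoretic: since $V_\Omega$ and $E_\Omega$ are forests and $[e]_O\mapsto[\opp{e}]_C$ is a bijection between open and closed equivalence classes, a short Euler count in $V_\Omega$ and $E_\Omega$ produces the equality directly. If you wish to keep the route through $\bar{Y}$, you should replace the homotopy-equivalence claim with this count; otherwise the paper's direct count in $Y$ is simpler and avoids the issue altogether.
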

\begin{proof}

For $\beta\in\vblocks$ and $\delta_\beta$ the corresponding basis vector of $C(\R)$, let
\[
\chi(\delta_\beta)=\#\pi_0(L(\beta))-\#V_{L(\beta)}/2\,.
\]
Again writing $\Phi(Y)=(t_\beta)$, the links of $Y$ consist of $t_\beta$ copies of the graphs $L(\beta)$, with the path components of $L(\beta)$ corresponding to the vertices of $Y$ (since $\Pi$ is suitable) and the vertices of $L(\beta)$ corresponding to the edges of $Y$. With the above definition of the function $\chi$, the desired equation
\[
\chi\circ\Phi(Y)=\#V_Y-\#E_Y/2=\chi(\Gamma_Y)
\]
now follows immediately.
\end{proof}

The corresponding statement for the total curvature $\tau(Y)$ follows immediately.

\begin{lemma}\label{lem: Total curvature functional}
There is a linear function $\tau$ on $C(\mathbb{R})$ such that
\[
\tau\circ\Phi(Y)=\tau(Y)
\]
for any $Y\in \Orig_\Pi(X)$.
\end{lemma}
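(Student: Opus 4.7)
The plan is to simply combine the two preceding lemmas. Since $\tau(Y)$ is defined as $\Area(Y) + \chi(\Gamma_Y)$, and we already have linear functionals $\Area$ and $\chi$ on $C(\mathbb{R})$ satisfying $\Area \circ \Phi(Y) = \Area(Y)$ (by Lemma \ref{lem: Area functional}) and $\chi \circ \Phi(Y) = \chi(\Gamma_Y)$ (by Lemma \ref{lem: Euler characteristic functional}), I would define
\[
\tau := \Area + \chi
\]
as a function on $C(\mathbb{R})$. This is linear as the sum of two linear functions, and rational since both summands are defined by rational coefficients on the basis $\{\delta_\beta\}_{\beta \in \vblocks}$.

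The verification is then a one-line computation: for any $Y \in \Orig_\Pi(X)$,
\[
\tau \circ \Phi(Y) = \Area \circ \Phi(Y) + \chi \circ \Phi(Y) = \Area(Y) + \chi(\Gamma_Y) = \tau(Y),
\]
where the first equality uses linearity, the second uses the previous two lemmas, and the third is Definition \ref{def: Total and average curvatures}. There is no real obstacle here; this lemma is essentially a bookkeeping corollary packaging the two previous linearisations into the single functional that will be paired against $\Area$ in the final projective extremisation argument for $\kappa = \tau/\Area$ on $C(\mathbb{R})$.
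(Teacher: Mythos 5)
Your proof is correct and matches the paper's, which simply sets $\tau=\Area+\chi$ and lets the previous two lemmas do the work. The extra verification you spell out is exactly the implicit content of the paper's one-line proof.
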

\begin{proof}
Set $\tau=\Area+\chi$.
\end{proof}

With these facts in hand, we can prove the most general form of the rationality theorem.

\begin{theorem}\label{thm: General rationality theorem}
Let $X$ be a finite branched 2-complex and let $\Pi$ be any suitable set of graphs. If $\Ess_\Pi(X)$ is non-empty then:
\begin{enumerate}[(i)]
\item\label{item: Linear programming} the supremum and infimum of $\kappa(Y)$ over all $Y\in\Ess_\Pi(X)$ are the extrema of an explicit linear-programming problem; furthermore,
\item\label{item: Attained} the supremum is attained by some $Y_{\max}\in\Ess_\Pi(X)$ and the infimum is attained by some $Y_{\min}\in\Ess_\Pi(X)$.
\end{enumerate}
\end{theorem}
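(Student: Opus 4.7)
The plan is to exploit the machinery set up in \S\ref{sec: A linear system} and \S\ref{sec: Curvature invariants} to reduce the extremisation of $\kappa$ to a rational linear-programming problem on the cone $C(\R)$. By Proposition \ref{prop: Forgetting origamis}, every $Y \in \Ess_\Pi(X)$ is the image of some $\tilde Y \in \Orig_\Pi(X)$ with $\kappa(\tilde Y) = \kappa(Y)$, and by Theorem \ref{thm: Rational cone}, $\tilde Y$ maps to an integer point $\Phi(\tilde Y) \in C(\Z)$. Lemmas \ref{lem: Area functional} and \ref{lem: Total curvature functional} then identify $\Area(\tilde Y)$ and $\tau(\tilde Y)$ with the values at $\Phi(\tilde Y)$ of explicit rational linear functionals on $C(\R)$, so $\kappa$ pulls back to the ratio $\tau/\Area$ on $C(\Z) \cap \{\Area > 0\}$.

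Next I would projectivise: because $\tau/\Area$ is homogeneous of degree zero on rays of $C(\R)$, optimising it over $C(\Z)$ is equivalent to optimising the linear function $\tau$ over the rational polyhedron
\[
P \;:=\; \{\, v \in C(\R) : \Area(v) = 1 \,\}.
\]
Assuming $P$ is a compact polytope, the supremum and infimum of $\tau$ on $P$ are attained at vertices of $P$, which are rational points since $P$ itself is a rational polytope. This proves item (\ref{item: Linear programming}): the extrema are rational, and are the output of an explicit LP. For item (\ref{item: Attained}), any such vertex $v_\ast$ admits a positive integer multiple $N v_\ast \in C(\Z)$, which by the surjectivity of $\Phi$ is of the form $\Phi(\tilde Y_\ast)$ for some $\tilde Y_\ast \in \Orig_\Pi(X)$; its image $Y_\ast \in \Ess_\Pi(X)$ then satisfies $\kappa(Y_\ast) = \tau(v_\ast)$, as required.

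The main obstacle is to verify that $P$ is bounded, i.e.\ that $\Area$ is strictly positive on $C(\R) \setminus \{0\}$. For each basis vector $\delta_\beta$ of $\R^{\vblocks}$, the requirement that every component of $L(\beta)$ lies in the suitable class $\Pi$ forces $L(\beta)$ to contain at least one edge, and by the formula in Lemma \ref{lem: Area functional} each such edge contributes the strictly positive quantity $\Area(f(e))/l(f(e))$, provided the target face $f(e)$ has positive area in $X$. In the standard case of Remark \ref{rem: Standard 2-complexes}, which covers the applications to Theorems \ref{introthm: Rationality for irreducible curvature} and \ref{introthm: Rationality for surface curvature}, this is automatic. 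In general one restricts to the rational subcone of $C(\R)$ spanned by those vertex blocks whose edges lie over positive-area faces, notes that every $\tilde Y \in \Orig_\Pi(X)$ with $\Area(\tilde Y) > 0$ lies in this subcone, and runs the above argument there; directions of $C(\R)$ along which $\Area$ vanishes contribute only elements $Y$ with $\Area(Y) = 0$, which are excluded from the curvature problem. The remaining checks --- that the extrema produced this way genuinely equal the supremum and infimum of $\kappa$ over all of $\Ess_\Pi(X)$, and not merely a subset --- follow routinely from the surjectivity statements in Proposition \ref{prop: Forgetting origamis} and Theorem \ref{thm: Rational cone}.
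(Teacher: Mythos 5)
Your argument follows exactly the route of the paper: pass from $\Ess_\Pi(X)$ to $\Orig_\Pi(X)$ via Proposition \ref{prop: Forgetting origamis}, transfer to $C(\Z)$ via Theorem \ref{thm: Rational cone} and Lemmas \ref{lem: Area functional} and \ref{lem: Total curvature functional}, projectivise to the polytope $P = C(\R)\cap\{\Area = 1\}$, apply linear programming, and realise a rational vertex by scaling to an integer point and invoking the surjectivity of $\Phi$. So the core of the proof is correct and matches the paper's.

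You do raise a legitimate point that the paper glosses over, namely that compactness of $P$ is not free: it requires $\Area(\delta_\beta) > 0$ for every basis vector $\delta_\beta$ of $\R^{\vblocks}$, which needs every $L(\beta)$ to contain an edge lying over a positive-area face of $X$. The hypothesis on $\Pi$ guarantees $L(\beta)$ has at least one edge, so this holds as soon as every face of $X$ has strictly positive area --- in particular in the standard case relevant to Theorems \ref{introthm: Rationality for irreducible curvature} and \ref{introthm: Rationality for surface curvature} --- and the paper implicitly assumes this. However, your proposed workaround for the case of zero-area faces does not work. The claim that every $\tilde Y$ with $\Area(\tilde Y) > 0$ lies in the subcone spanned by positive-area vertex blocks is false: $\tilde Y$ may have some vertices whose induced block contributes positive area and others whose induced block contributes zero area, in which case $\Phi(\tilde Y)$ has a strictly positive coordinate at a zero-area block and so lies outside the subcone. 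Nor can you simply discard the bad directions: if $C(\R)$ contains a ray on which $\Area$ vanishes, $P$ itself is unbounded and the linear programme need not have a finite optimum, so the extremisation problem itself may be ill-posed there. The honest fix is to make the positive-area assumption on faces of $X$ explicit (it is forced anyway whenever $\kappa(X)$ is used), at which point your argument and the paper's are complete and coincide.
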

\begin{proof}
We give the proof for the supremum, since the proof for the infimum is identical.

Since $\kappa$ does not depend on any choice of origami, the supremum is also the supremum of $\kappa(Y)$ over all $Y\in\Orig_\Pi(X)$, by Proposition \ref{prop: Forgetting origamis}. By Lemmas \ref{lem: Area functional} and \ref{lem: Total curvature functional}, the projective function
\[
\kappa=\tau/\Area
\]
on $C(\R)\smallsetminus 0$ is such that $\kappa(Y)=\kappa\circ\Phi(Y)$ for each $Y\in\Orig_\Pi(X)$. On the compact, rational polytope
\[
P=C(\R)\cap\{\Area(t)=1\}
\]
the simplex algorithm implies that the supremum of $\kappa$ is achieved at a vertex of $P$, equal to a rational vector $t_{\max}$. Thus, by construction, $\kappa(t_{\max})$ is the desired supremum, which proves item (\ref{item: Linear programming}). Since $t_{\max}$ is rational and $\kappa$ is projective, we may replace $t_{\max}$ by a multiple so that $t_{\max}\in C(\Z)$. By the surjectivity guaranteed by Theorem \ref{thm: Rational cone}, there is $Y_{\max}\in\Orig_\Pi(X)$ such that $\Phi(Y_{\max})=t_{\max}$, so the supremum is attained, proving item (\ref{item: Attained}) for the supremum.
\end{proof}

Since $\Irred(X)$ and $\Surf(X)$ are instances of $\Ess_\Pi(X)$, Theorems \ref{introthm: Rationality for irreducible curvature} and \ref{introthm: Rationality for surface curvature} follow immediately by Lemma \ref{lem: Surf and Irred are suitable}.

\bibliographystyle{plain}

\Addresses

\end{document}